\newcommand{\dnote}[2][red]{{\leavevmode\unskip\footnotesize\hspace{0.75em}\textcolor{#1}{[#2]}}}
\newcommand{\todo}[1]{\dnote{\textbf{todo:} #1}}
\definecolor{GREEN}{rgb}{0,1,0}
\definecolor{green4}{rgb}{.1,.5,.1}
\definecolor{blue}{rgb}{0,0,1}
\definecolor{gray}{rgb}{0.5,0.5,0.5}
\definecolor{violet}{rgb}{0.7,0,1}
 \newcommand{\ep}{\end{proof}}
 \newif\ifpctex
  \newtheorem{theorem}{Theorem}
  \newtheorem{definition}{Definition}[section]
  \newtheorem{cond}[definition]{Condition}
  \newtheorem{proposition}[definition]{Proposition}
  \newtheorem{lemma}[definition]{Lemma}
  \newtheorem{corollary}[definition]{Corollary}
  \newcommand{\beCond}[2]{\Rand{\vspace{0,6cm}\tt #1}\begin{cond}[#2]
  \label{#1}} \theoremstyle{definition}
  \newtheorem{remark}[definition]{Remark}
  \numberwithin{equation}{section}
  \newtheoremstyle{step}{3pt}{0pt}{\itshape}{}{\bf}{}{.5em}{}
\theoremstyle{step} \newtheorem{step}{Step}
\newcommand{\E}{\mathbb{E}}
\newcommand{\R}{\mathbb{R}}
\newcommand{\N}{\mathbb{N}}
\newcommand{\CL}{\mathcal{L}}
\newcommand{\CM}{\mathcal{M}}
\newcommand{\CN}{\mathcal{N}}
\newcommand{\Rand}[1]{\marginpar{#1}} 
\newcommand{\be}[1]{\begin{equation}\label{#1}}
\newcommand{\ee}{\end{equation}}
\newcommand{\bew}[1]{\Rand{\vspace{0,6cm}\tt #1}\begin{equation*}\label{#1}}
\newcommand{\eew}{\end{equation*}}
\newcommand{\bea}[1]{\Rand{\vspace{0,6cm}\tt #1}\begin{eqnarray*}\label{#1}}
\newcommand{\eea}[1]{\end{eqnarray*}}
\newcommand{\beL}[2]{\Rand{\vspace{0,6cm}\tt #1}\begin{lemma}[#2]\label{#1}}
\newcommand{\beD}[2]{\Rand{\vspace{0,6cm}\tt #1}\begin{definition}[#2]\label{#1}}
\newcommand{\beT}[2]{\Rand{\vspace{0,6cm}\tt #1}\begin{theorem}[#2]\label{#1}}
\newcommand{\beP}[2]{\Rand{\vspace{0,6cm}\tt #1}\begin{proposition}[#2]\label{#1}}
\newcommand{\beC}[1]{\Rand{\vspace{0,6cm}\tt #1}\begin{corollary}\label{#1}}
\newcommand{\beR}[1]{\Rand{\vspace{0,6cm}\tt #1}\begin{remark}[#1]\label{#1}}
\newcommand{\Tto}{{_{\displaystyle\Longrightarrow\atop t\to\infty}}}
\newcommand{\tto}{{_{\displaystyle\longrightarrow\atop t\to\infty}}}
\newcommand{\Tno}{{_{\displaystyle\Longrightarrow\atop n\to\infty}}}
\newcommand{\tro}{{_{\displaystyle\longrightarrow\atop r\to\infty}}}
\newcommand{\tno}{{_{\displaystyle\longrightarrow\atop n\to\infty}}}
\newcommand{\tMo}{{_{\displaystyle\longrightarrow\atop M\to\infty}}}
\newcommand{\tzetao}{{_{\displaystyle\longrightarrow\atop \zeta\to 0}}}
\newcommand{\Tzetao}{{_{\displaystyle\Longrightarrow\atop \zeta\to 0}}}
\newcommand{\1}{{\bf 1}}
\newcommand{\supp}{\mathrm{supp}}
\DeclareMathAlphabet{\mathpzc}{OT1}{pzc}{m}{it}
\newcommand{\anita}[1]{{\color{blue}#1}}
\newcommand{\luis}[1]{{\color{violet}#1}}
\begin{document}

\title[Virus population under cell division]
{{\large Two level branching model for virus population under cell division}}

\author{Luis Osorio}
\address{Luis Osorio\\ Fakult\"at f\"ur Mathematik\\
Universit\"at Duisburg-Essen, Campus Essen\\  Universit\"atsstra{\ss}e~2\\
 45132 Essen \\ Germany}
 \email{luis.osorio@uni-due.de}

\author{Anita Winter}
\address{Anita Winter \\ Fakult\"at f\"ur Mathematik\\
Universit\"at Duisburg-Essen, Campus Essen\\  Universit\"atsstra{\ss}e~2\\
 45132 Essen \\ Germany}
\email{anita.winter@uni-due.de}

\thanks{This research was supported by the DFG through the SPP Priority Programme 1590 and CONACYT.}

\thispagestyle{empty}
\date{\today\\ Yule{\_}final.tex}

\keywords{two-level branching, logistic branching, Yule process, tree-indexed branching processes, cell division, virus population}

\subjclass[]{}

  \begin{abstract}
    \sloppy In this paper we study a two-level branching model for virus populations under cell division. We assume that the cells are carrying  virus populations which evolve as a branching particle system with competition, while the cells split according to a Yule process thereby dividing their virus populations into two independently evolving sub-populations.
    We then assume that sizes of the virus populations are huge and characterize the fast branching rate and huge population density limit as the solution of a well-posed martingale problem. While verifying tightness is quite standard, we provide a Feynman-Kac duality relation to conclude uniqueness. Moreover, the duality relation allows for a further study of the long term behavior of the model.
    \end{abstract}

 \maketitle


 \section{Introduction and motivation}
 \label{S:introduction}
 In this paper, we model the two-level population dynamics resulting from
 a system of virus populations evolving in cells, which themselves undergo splitting. The virus population dynamics is modeled as a branching particle system with competition. In the limit of large virus populations this results into continuous state branching with a logistic drift.
 Cells split as a Yule process, i.e., they branch binary but are not affected from death. When a cell splits, it divides the virus particles in a random way.

 Our model generalizes the two-level branching models first developed and studied in \cite{Wu1991,DawsonHochberg1991,Wu1992,Wu1993,Etheridge1993,Wu1994}. These multilevel structures have been used, for example, to
describe the replication, updating and transfer of digitized data sets as they pass
through information networks. Moreover, modeling with multilevel structures
found also applications in population biology. For example, the
genetic decomposition of a structured population of individuals living in colonies may be altered through births and deaths within a
given colony, while the colony itself may disappear or replicate at random times. Moreover, such ``division-within-division'' dynamics show also  in gene amplification in cancer cells and elsewhere, plasmid dynamics in bacteria, and proliferation of viral particles in host cells (\cite{Kimmel1997}).

In all the above papers technique from Laplace transforms leading to a deterministic duality relation were used by the authors. In the presence of interaction between individuals (for example, due to the competition) or between cells, these techniques are not available anymore. The first  model which considers such interaction in two-level branching model is \cite{MeleardRoelly2011}.
Here a multi-type model is considered in which the trait type of the virus can also determine the dynamics of the hosts. Unfortunately, the study of the long time behavior of these processes is very hard  and was only studied in simple cases to highlight the difficulties. In \cite{Dawson2018} a class of probability measure-valued processes which model multilevel multi-type populations undergoing resampling,
mutation, selection, genetic drift and spatial migration is analysed.
For that a generalization of the function-valued and
set-valued dual relation introduced in \cite{DawsonGreven2014} for the one-level model was provided.

In this paper we will restrict ourselves to mono-type populations but allow for a change in the population sizes due to branching.
Our interaction between particles will be due to competition among particles in the same host. We will provide a Feynman-Kac dual relation which generalizes
the exponential dual relation developed in \cite{HutzenthalerWakolbinger2007,GrevenSturmWinterZaehle}
for systems of logistic branching diffusions and  in \cite{Foucart2019} for more general continuous state branching processes with a logistic drift. For our extension from the one-level to the two-level case we will adapt ideas from \cite{HermannPfaffelhuber2020}.

This equation is inspired from \cite{Bansaye2008,BansayeTran2011} where criteria of survival and extinction are given for virus or parasite populations growing inside cells. Again our paper contributes by in contrast to these papers here allowing for interaction between the particles due to competition. After stating our duality relation, the first basic long term behavior is established.

Recently, in \cite{Meizis2019} a state space for evolving genealogies of two-level branching population has been introduced in detail. The present
paper aims to be the basis to use this state space for studying evolving genealogies of virus and parasites populations in within splitting cells in forthcoming work.\smallskip

\noindent {\bf Outline. } The rest of the paper is organized as follows: In Section~\ref{S:model} we introduce our model as a measure-valued Markov process.
We start with the individual based model, claim the tightness after suitable rescaling and derive a well-posed martingale problem which characterizes the limit. In Section~\ref{S:existence} we prove the existence of a solution to this martingale problem by verifying the tightness. In Section~\ref{S:ConvGen} we prove the uniform convergence of the generators from which we can conclude the martingale problem that any limit point satisfies. In Section~\ref{S:uniqueness} we show uniqueness of a solution by establishing the duality relation. In Section~\ref{S:longterm} we apply this duality relation to obtain first results on the longterm behavior.

\section{Introduction to the model and main results}
\label{S:model}
In this section we introduce our models and state the main results. In Subsection~\ref{Sub:particle} we start with the individual based model. In Subsection~\ref{Sub:particlef} we present the scaling regime under which the tightness is stated. In Subsection~\ref{Sub:martingale} we characterize the limit as the solution to a well-posed martingale problem and state convergence. In Subsection~\ref{Sub:longterm} we discuss the basic longterm behaviour.

As usual, given a Polish space $(X,{\mathcal O})$
and a Banach space $Y$ we denote by ${\mathcal B}(X;Y)$, ${\mathcal C}(X;Y)$ and ${\mathcal D}_Y(X;Y)$ the spaces of functions from $X$ to $Y$ that are Borel-measurable, continuous and c\`{a}dl\`{a}g, respectively. We abbreviate ${\mathcal B}(X)={\mathcal B}(X;\mathbb{R})$,
${\mathcal C}(X)={\mathcal C}(X;\mathbb{R})$ and ${\mathcal D}(X)={\mathcal D}(X;\mathbb{R})$. Moreover, we write ${\mathcal B}_b(X)$ and ${\mathcal C}_b(X)$ for the subspaces of bounded functions.

Furthermore, we denote by
${\mathcal M}_1(X)$ and ${\mathcal M}_f(X)$ the spaces of probability measures
and finite measures on $X$, respectively, defined on
the Borel-$\sigma$-algebra of $X$. For $f\in{\mathcal B}(X)$ and $\nu\in{\mathcal M}_f(X)$ with $\int |f|\,\mathrm{d}\nu<\infty$, we abbreviate
\begin{equation}
\label{e:008}
   \langle f,\nu\rangle:=\int f\,\mathrm{d}\nu.
\end{equation}
We write $\Longrightarrow$ for weak
convergence of a sequence $(\nu_n)_{n\in\mathbb{N}}$ to $\nu$ in $\mathcal M_1(X)$
respectively ${\mathcal M}_f(X)$, i.e., $\nu_n\Tno\nu$ if and only if $\langle f,\nu_n\rangle\tno \langle f,\nu\rangle$ for all bounded $f\in{\mathcal C}(X)$.

\subsection{The two-level branching particle model}
\label{Sub:particle}
In this subsection we introduce our individual based model in detail.


Denote by
${\mathcal N}_f\big(X\big)$ the subspace of ${\mathcal M}_f(X)$ of point measures.
Fix an {\em individual mass constant} $\zeta>0$.
The {\em virus population model with competition and cell division} is a Markov process, $Z^{\zeta}=(Z^\zeta_t)_{t\ge 0}$,  that takes values in
 the space
 \begin{equation}
 \label{m:001}
    {\mathcal N}_f\big({\zeta}\mathbb{N}_0\big):=\big\{\nu\in{\mathcal N}_f(\mathbb{R}):\,\nu=\sum_{i=1}^m\delta_{z_i},\,m\in\mathbb{N}_0,\zeta^{-1} z_1,...,\zeta^{-1} z_N\in\mathbb{N}_0\big\}
 \end{equation}
 and has the following dynamics: given its current state $\nu=\sum_{i=1}^{m}\delta_{z_i}$, for some $m\in\mathbb{N}$ and $z_1,...,z_m\in\zeta\mathbb{N}_0$,
\begin{itemize}
 \item {\bf Cell division. } For all $i=1,...,m$ and for each $k\in \{1,...,\zeta^{-1}z_i\}$ at rate
 \begin{equation}
 \label{e:006}
    r {\zeta^{-1}z_i\choose k} \theta^{\zeta^{-1}z_i-k}\big(1-\theta\big)^k
 \end{equation}
 we have the following jump:
 \begin{equation}
 \label{e:002}
    \nu\mapsto\nu-\delta_{z_i}+\delta_{\zeta k}+\delta_{z_i-\zeta k}.
\end{equation}
That is, for each $i=1,...,m$ at rate $r>0$ the $i^{\mathrm{th}}$ cell splits into two and each of the $\zeta^{-1}z_i$ many virus particles currently living within the $i^{\mathrm{th}}$ cell goes with probability $\theta\in(0,1)$ to one of the two new cells and with probability $(1-\theta)$ to the other. All virus particles decide independently of all others to which new cell they will belong.
\item {\bf Branching with competition within a cell. }
\begin{itemize}
\item {\bf Birth. } For all $i=1,2...,m$, each of the $\zeta^{-1} z_i$  particles  in the $i^{\mathrm{th}}$ cell  gives birth at rate $\zeta^{-1}\sigma+ K$, for $\sigma>0$ and $K\in\mathbb{R}$, to a new particle of mass $\zeta$. The new virus particle also lives in the   $i^{\mathrm{th}}$ cell. This leads to the following jump:
\begin{equation}
 \label{e:003}
    \nu\mapsto\nu-\delta_{z_i}+\delta_{z_i+\zeta}.
\end{equation}
\item {\bf Natural death. } For all $i=1,2...,m$, each of the $\zeta^{-1} z_i$ virus particles  in the $i^{\mathrm{th}}$ cell dies
 at rate $\zeta^{-1}\sigma$. This leads to the following jump:
 \begin{equation}
 \label{e:004}
    \nu\mapsto\nu-\delta_{z_i}+\delta_{z_i-\zeta}.
\end{equation}
\item {\bf Death due to competition. } For all $i=1,2...,m$, each of the $\zeta^{-1} z_i$  virus particles  in the $i^{\mathrm{th}}$ cell dies at rate $\lambda (z_i-\zeta)$ due to its lost in competition against one of the $\zeta^{-1}z_i-1$ other virus particles in the $i^{\mathrm{th}}$ cell. This leads to the following jump:
\begin{equation}
 \label{e:005}
    \nu\mapsto\nu-\delta_{z_i}+\delta_{z_i-\zeta}.
\end{equation}
\end{itemize}
 \end{itemize}

 \begin{remark}[state dependent splitting] One can generalize the above model by replacing the above splitting rate $r$
by a splitting rate which depends on the virus mass carried by the cell. In Corollary~\ref{Cor:Divxp} we consider the particular case where
\begin{equation}
   r(z)=\bar r(1+z^p),
\end{equation}
for some $p\geq 1$, $\bar r>0$.
 \hfill$\qed$
 \label{Rem:002}
 \end{remark}

To be more precise,  let $N(\mathrm{d}s,\mathrm{d}i,\mathrm{d}x)$ be the Poisson point process on
$\R_+\times \N\times \R_+$ with the intensity measure $\lambda\otimes\eta\otimes\lambda$, where $\lambda$ denotes the Lebesgue measure and $\eta$ the counting measure.
Consider a stochastic process $Z^\zeta=(Z^\zeta_t)_{t\ge 0}$ taking values in $\mathcal{N}_f(\zeta\N)$, i.e., $Z^\zeta_t$ is for all $t\ge 0$ of the form
$Z^\zeta_t=\sum_{i=1}^{\langle 1,\nu_t\rangle} \delta_{z_{i,t}}$, which solves the following stochastic differential equation:
\begin{equation}
\label{Eq:DiscrModel}
    \begin{aligned}
    Z^\zeta_t &= Z^\zeta_0
+\int_{[0,t]\times\mathbb{N}\times[0,r]}\sum_{k=1}^{\zeta^{-1}z_{i,s-}}
    \big(\delta_{\zeta k}+\delta_{z_{i,s-}-\zeta k}-\delta_{z_{i,s-}}\big)
    \\
    &\hspace{5cm}\1_{\{1,...,\langle 1,Z^\zeta_{s-}\rangle\}}(i)\1_{(rF^{\theta}_{\zeta^{-1}z_{i,s-}}(k-1),rF^{\theta}_{\zeta^{-1}z_{i,s-}}(k)]}(x)
    \,N(\mathrm{d}s,\mathrm{d}i,\mathrm{d}x)
  \\
    &+
    \int_{[0,t]\times\mathbb{N}\times\mathbb{R}_+} \big(\delta_{z_{i,s-}+\zeta}-\delta_{z_{i,s-}}\big)\1_{\{1,...,\langle 1,Z^\zeta_{s-}\rangle\}}(i)\1_{(r,r+(\zeta^{-1}\sigma+K)\zeta^{-1}z_{i,s-}]}(x)\,
    N(\mathrm{d}s,\mathrm{d}i,\mathrm{d}x)
    \\
    &+\int_{[0,t]\times\mathbb{N}
    \times\mathbb{R}_+} \big(\delta_{z_{i,s-}-\zeta}-\delta_{z_{i,s-}}\big)
    \\
    &
    \hspace{2cm}\1_{\{1,...,\langle 1,Z^\zeta_{s-}\rangle\}}(i)\1_{(r+(\zeta^{-1}\sigma+K)\zeta^{-1}z_{i,s-},r+(2\zeta^{-1}\sigma+K+\lambda(z_{i,s-}-\zeta))
    \zeta^{-1}z_{i,s-}]}(x)\, N(\mathrm{d}s,\mathrm{d}i,\mathrm{d}x)
    \end{aligned}
\end{equation}
where for $n\in\mathbb{N}_0$ and $k\in\{0,...,n\}$,
\begin{equation}
\label{e:007}
   F^\theta_{n}(k):=\sum_{j=0}^k p^\theta_{j,n}:=\sum_{j=0}^k{n\choose j} \theta^j (1-\theta)^{n-j}.
\end{equation}

Our first result states that this stochastic differential equation has a unique and Markovian solution.
\begin{proposition}[individual based model]
    Let $Z_0\in \mathcal{N}_f(\zeta \N)$ satisfy $\E\left[\langle 1+x,Z_0\rangle\right]<\infty$.
    Then there exist a unique solution to equation (\ref{Eq:DiscrModel}).
     Each solution is a Markov process whose  generator $\Omega_{\mbox{\tiny $2$-level}}^\zeta$ acts on $\mathcal{B}_{b}(N_f(\zeta\N))$ as follows: for $\phi\in \mathcal{B}_{b}(N_f(\zeta\N)))$,
\begin{equation}
    \begin{aligned}
    \label{Eq:PartGener1}
    \Omega_{\mbox{\tiny $2$-level}}^\zeta
    :=\Omega^{(r,\theta),\zeta}_{\mbox{\tiny cell split}}+\Omega^{(\sigma,K,\lambda),\zeta}_{\mbox{\tiny virus branching}}
\end{aligned}
\end{equation}
with
\begin{equation}
    \begin{aligned}
    \label{Eq:PartGener2}
    \Omega^{(r,\theta),\zeta}_{\mbox{\tiny cell split}} \phi(\nu)
    &=
    r\int_{\mathbb{R}_+} \sum_{k=0}^{\zeta^{-1}z}p^\theta_{k,\zeta^{-1}z} \Big(\phi\big(\nu-\delta_{z}+\delta_{z-\zeta k}+\delta_{\zeta k}\big)-\phi(\nu)\Big)
    \end{aligned}
    \end{equation}
    and
    \begin{equation}
    \begin{aligned}
    \label{Eq:PartGener3}
    \Omega_{\mbox{\tiny virus branching}}^{(\sigma,K,\lambda),\zeta} \phi(\nu)
    &:=
    \Omega_{\mbox{\tiny virus birth}}^{(\sigma,K,\lambda),\zeta} \phi(\nu)+\Omega_{\mbox{\tiny virus death}}^{(\sigma,K,\lambda),\zeta} \phi(\nu)
    \\
    &:=
    \big(\zeta^{-1}\sigma+K\big)\zeta^{-1}\int_{\mathbb{R}_+}   z \Big(\phi\big(\nu-\delta_{z} + \delta_{z+\zeta} \big) -\phi(\nu) \Big)\, \nu(\mathrm{d}z)
    \\
    &\,+ \zeta^{-1} \int_{\mathbb{R}_+} \big(\zeta^{-1}\sigma +\lambda (z-\zeta)\big)z \Big(\phi\big(\nu-\delta_{z} + \delta_{z-\zeta} \big) -\phi(\nu)  \Big)\, \nu(\mathrm{d}z).
    \end{aligned}
    \end{equation}
    \label{Prop:WellDef}
\end{proposition}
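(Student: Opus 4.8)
The plan is the standard strong (pathwise) construction of a pure-jump process from the driving Poisson point process $N$, in the spirit of \cite{BansayeTran2011}, together with moment estimates that exclude explosion; pathwise uniqueness, the Markov property and the form of the generator then follow routinely. The only genuinely delicate point is non-explosion: since the competitive death rate grows quadratically in the cell mass and the number of cells grows with time, the total event rate cannot be bounded uniformly in the state; the way out is the observation that the particle number is dominated by a \emph{linear} birth process, and that the death and division events are in turn controlled by the births and by the particle number.

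First I would build a candidate solution by interlacing. For a state $\nu=\sum_{i=1}^m\delta_{z_i}\in\mathcal N_f(\zeta\N)$ only finitely many ``windows'' in the $(i,x)$-variable are active -- those with $i\in\{1,\dots,m\}$ and $x$ in one of the bounded intervals appearing in (\ref{Eq:DiscrModel}) -- and their total Lebesgue length is finite, bounded by $C_\zeta\bigl(m+\langle x,\nu\rangle+\langle x^2,\nu\rangle\bigr)$ for a constant depending only on $\zeta$ and the parameters. Starting from $Z^\zeta_0$ one selects the atom of $N$ with smallest time-coordinate whose $(i,x)$-coordinate lies in an active window, performs the corresponding jump (\ref{e:002})--(\ref{e:005}), and iterates. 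This yields a c\`adl\`ag $\mathcal N_f(\zeta\N)$-valued path defined on $[0,\tau_\infty)$, where $\tau_\infty=\lim_n\tau_n$ and $\tau_n$ is the $n$-th jump time. To see $\tau_\infty=\infty$ a.s., note that the particle number $\langle 1,Z^\zeta_t\rangle$ increases only at births, which occur at the constant per-capita rate $\zeta^{-1}\sigma+K$; hence it is stochastically dominated by a Yule process of rate $(\zeta^{-1}\sigma+K)\vee 0$ and $\E[\langle 1,Z^\zeta_t\rangle]\le\langle 1,Z_0\rangle e^{((\zeta^{-1}\sigma+K)\vee 0)t}<\infty$. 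Consequently the number of cells (at most $\langle 1,Z^\zeta_t\rangle$) stays finite, the number of natural- and competition-death events up to time $t$ is at most $\langle 1,Z_0\rangle$ plus the number of births (expectation $(\zeta^{-1}\sigma+K)\int_0^t\E[\langle 1,Z^\zeta_s\rangle]\,\mathrm ds<\infty$), and cell divisions occur at total rate at most $r\langle 1,Z^\zeta_t\rangle$. Thus only finitely many jumps occur on any bounded interval, i.e. $\tau_\infty=\infty$.

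Next I would establish $\E[\langle 1+x+x^2,Z^\zeta_t\rangle]<\infty$ for every $t$, which is needed to upgrade the Dynkin identity below to a genuine martingale statement. Applying (\ref{Eq:DiscrModel}) to the test functions $\nu\mapsto\langle 1,\nu\rangle,\langle x,\nu\rangle,\langle x^2,\nu\rangle$ and stopping at $\tau_n$, one uses that cell division leaves $\langle 1,\nu\rangle$ and $\langle x,\nu\rangle$ unchanged and can only decrease $\langle x^2,\nu\rangle$ (splitting a mass $z$ into $\zeta k$ and $z-\zeta k$ changes $\langle x^2,\nu\rangle$ by $-2\zeta k(z-\zeta k)\le 0$), that births contribute a drift $(\zeta^{-1}\sigma+K)\langle x,\nu\rangle$ to $\langle x,\nu\rangle$ and $2(\zeta^{-1}\sigma+K)\langle x^2,\nu\rangle+(\zeta^{-1}\sigma+K)\zeta\langle x,\nu\rangle$ to $\langle x^2,\nu\rangle$, and that deaths only decrease these quantities, the competitive deaths even producing a negative contribution cubic in the $z_i$'s which we simply discard. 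Gr\"onwall's lemma gives bounds uniform in $n$, and $n\to\infty$ with Fatou yields the claim; in particular this reconfirms $\tau_\infty=\infty$.

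Finally, pathwise uniqueness holds because two solutions of (\ref{Eq:DiscrModel}) with the same initial value and the same driving $N$ have identical active windows as long as they agree, hence the same first jump, and so coincide for all times by induction over the (locally finitely many) jumps. The Markov property follows since $(Z^\zeta_{t+s})_{s\ge 0}$ is the same measurable functional of $Z^\zeta_t$ and of $N$ restricted to $(t,\infty)\times\N\times\R_+$, which is independent of $\mathcal F_t$ and has the same law as $N$. For the generator, I would apply It\^o's formula (summing the jumps and compensating) to $\phi(Z^\zeta_t)$, $\phi\in\mathcal B_b(\mathcal N_f(\zeta\N))$: the compensator of the jump terms arises by replacing $N(\mathrm ds,\mathrm di,\mathrm dx)$ by $\mathrm ds\,\eta(\mathrm di)\,\mathrm dx$ and integrating $x$ over the windows, whose lengths are $r\,p^\theta_{k,\zeta^{-1}z_i}$ for the $(i,k)$-split, $(\zeta^{-1}\sigma+K)\zeta^{-1}z_i$ for the $i$-th birth and $(\zeta^{-1}\sigma+\lambda(z_i-\zeta))\zeta^{-1}z_i$ for the $i$-th death; summing over $i$ and $k$ reproduces exactly $\Omega^{(r,\theta),\zeta}_{\mbox{\tiny cell split}}+\Omega^{(\sigma,K,\lambda),\zeta}_{\mbox{\tiny virus branching}}$. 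Since $|\Omega^\zeta_{\mbox{\tiny $2$-level}}\phi(\nu)|\le C_\phi\langle 1+x+x^2,\nu\rangle$, the moment bound makes $s\mapsto\Omega^\zeta_{\mbox{\tiny $2$-level}}\phi(Z^\zeta_s)$ integrable on $[0,t]$, so $\phi(Z^\zeta_t)-\phi(Z_0)-\int_0^t\Omega^\zeta_{\mbox{\tiny $2$-level}}\phi(Z^\zeta_s)\,\mathrm ds$ is a true martingale, which identifies the generator.
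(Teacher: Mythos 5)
Your proposal is correct in substance and reaches the conclusion by a genuinely more constructive route than the paper. The paper follows Fournier--M\'el\'eard: it localizes at the stopping times $\tau_M$ of (\ref{e:020}), below which all rates are bounded, applies the SDE to $\phi(\nu)=\int(1+z)\,\nu(\mathrm{d}z)$, discards the negative death terms, and uses Gronwall to bound $\E\big[\sup_{t\le\tau_M\wedge T}\langle 1+z,Z^\zeta_t\rangle\big]$ uniformly in $M$, whence $\tau_M\to\infty$; the generator is then read off by taking expectations in (\ref{Eq:phi(Z)}) and differentiating at $t=0$. Your interlacing construction together with the pathwise counting argument (deaths bounded by initial particles plus births, births dominated by a linear pure-birth process, divisions by a Yule process in the cell number) is an equally valid and arguably more transparent way to rule out explosion of the quadratically growing competition rate; it also delivers pathwise uniqueness and the Markov property directly from the construction, which the paper leaves implicit.

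Two slips should be fixed, neither fatal. First, $\langle 1,Z^\zeta_t\rangle$ is the number of \emph{cells}, which increases only at cell divisions (rate $r$ per cell, hence dominated by a rate-$r$ Yule process); the quantity that increases only at virus births at per-capita rate $\zeta^{-1}\sigma+K$ is the number of virus particles $\zeta^{-1}\langle x,Z^\zeta_t\rangle$, and the bound on the number of death events should read ``initial particle number plus number of births.'' Your domination argument needs both bounds, and your later sentence that cell division leaves $\langle 1,\nu\rangle$ unchanged is false --- it increases it by one --- though this only contributes a linear term $r\langle 1,\nu\rangle$ to the drift, so Gronwall still applies. Second, your upgrade to a true martingale uses $\E\big[\langle x^2,Z^\zeta_t\rangle\big]<\infty$, which via Gronwall requires $\E\big[\langle x^2,Z_0\rangle\big]<\infty$; this is not among the stated hypotheses, so you should either argue conditionally on $Z_0$ (the generator identity is pointwise in $\nu$) or remain with the stopped martingales. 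The paper's own treatment of this last point, which only assumes a first moment and then differentiates under the expectation, is no more careful.
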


\begin{proof} Notice that for each $M\in\mathbb{N}$ and all $t\le \tau_M$, where
\begin{equation}
\label{e:020}
    \tau_M :=\inf\big\{t>0:\,\int_{\mathrm{R}_+} \big(1+z\big) Z^\zeta_t(\mathrm{d}z)>M\big\}
\end{equation}
all coefficients in the above sde are bounded, and the sde has therefore a well-defined solution up to time $\tau_M$. We are going to show that $\tau_M\tMo\infty$, almost surely.

For that we proceed as in the proof of \cite[Proposition~2.6]{FournierMeleard2004}.  That is, for all $M\in\mathbb{N}$, $T\ge 0$  and $t\in[0,\tau_M\wedge T]$ and  all $\phi\in {\mathcal{C}_b}(\mathcal N_f(\zeta \N))$,
\begin{equation}
\label{Eq:phi(Z)}
    \begin{aligned}
    \phi (Z^\zeta_t)
    &=
    \phi(Z^\zeta_0)+
    \int_{[0,t]\times\mathbb{N}\times[0,r]}\sum_{k=1}^{\zeta^{-1}z_{i,s-}}
    \Big(\phi\big(\nu+\delta_{\zeta k}+\delta_{z_{i,s-}-\zeta k}-\delta_{z_{i,s-}}\big)-\phi\big(\nu\big)\Big)
    \\
    &\hspace{3cm}\cdot
    \1_{\{1,...,\langle 1,Z^\zeta_{s-}\rangle\}}(i)\1_{(rF^{\theta}_{\zeta^{-1}z_{i,s-}}(k-1),rF^{\theta}_{\zeta^{-1}z_{i,s-}}(k)]}(x)\,
    N(\mathrm{d}s,\mathrm{d}i,\mathrm{d}x)
  \\
    &+
    \int_{[0,t]\times\mathbb{N}\times\mathbb{R}_+} \Big(\phi\big(\nu+\delta_{z_{i,s-}+\zeta}-\delta_{z_{i,s-}}\big)-\phi\big(\nu\big)\Big)
    \\
    &\hspace{4cm}\cdot
    \1_{\{1,...,\langle 1,Z^\zeta_{s-}\rangle\}}(i)\1_{(r,r+(\zeta^{-1}\sigma+K)\zeta^{-1}z_{i,s-}]}
    (x)\,N(\mathrm{d}s,\mathrm{d}i,\mathrm{d}x)
    \\
    &+\int_{[0,t]\times\mathbb{N}\times\mathbb{R}_+} \Big(\phi\big(\nu+\delta_{z_{i,s-}-\zeta}-\delta_{z_{i,s-}}\big)-\phi\big(\nu\big)\Big)
    \\
    &\hspace{2cm}\cdot\1_{\{1,...,\langle 1,Z^\zeta_{s-}\rangle\}}(i)\1_{(r+(\zeta^{-1}\sigma+K)\zeta^{-1}z_{i,s-},r+(2\zeta^{-1}\sigma+K+\lambda(z_{i,s-}-\zeta))\zeta^{-1}z_{i,s-}]}(x)\, N(\mathrm{d}s,\mathrm{d}i,\mathrm{d}x).
\end{aligned}
\end{equation}

In particular, for $\phi(\nu):=\int_{\mathbb{R}_+} (1+z)\,\nu(\mathrm{d}z)$ we have
\begin{equation}
\label{e:009}
    \begin{aligned}
    &\int_{\mathbb{R}_+} \big(1+z\big)\,Z^\zeta_t(\mathrm{d}z)
    \\
    &=
    \int_{\mathbb{R}_+} \big(1+z\big)\,Z^\zeta_0(\mathrm{d}z)  +
    \int_{[0,t]\times \N \times[0,r]} \1_{\{1,...,\langle 1,Z^\zeta_{s-}\rangle\}}(i)\,N(\mathrm{d}s,\mathrm{d}i,\mathrm{d}x)
    \\
    &+
    \int_{[0,t]\times\mathbb{N}\times\mathbb{R}_+} \zeta \1_{\{1,...,\langle 1,Z^\zeta_{s-}\rangle\}}(i)\1_{(r,r+(\zeta^{-1}\sigma+K)\zeta^{-1}z_{i,s-}]}(x)\,
    N(\mathrm{d}s,\mathrm{d}i,\mathrm{d}x)
    \\
    &-\int_{[0,t]\times\mathbb{N}\times\mathbb{R}_+} \zeta\1_{\{1,...,\langle 1,Z^\zeta_{s-}\rangle\}}(i)\1_{(r+(\zeta^{-1}\sigma+K)\zeta^{-1}z_{i,s-},r+(2\zeta^{-1}\sigma+K+\lambda(z_{i,s-}-\zeta))
    \zeta^{-1}z_{i,s-}]}(x)\, N(\mathrm{d}s,\mathrm{d}i,\mathrm{d}x)
    \\
    &\leq
    \int_{\mathbb{R}_+} \big(1+z\big)\,Z^\zeta_0(\mathrm{d}z)  +
    \int_{[0,t]\times \N \times[0,r]} \1_{\{1,...,\langle 1,Z^\zeta_{s-}\rangle\}}(i)\,N(\mathrm{d}s,\mathrm{d}i,\mathrm{d}x)
    \\
    &+
    \int_{[0,t]\times\mathbb{N}\times\mathbb{R}_+} \zeta \1_{\{1,...,\langle 1,Z^\zeta_{s-}\rangle\}}(i)\1_{(r,r+(\zeta^{-1}\sigma+K)\zeta^{-1}z_{i,s-}]}(x)\,
    N(\mathrm{d}s,\mathrm{d}i,\mathrm{d}x).
\end{aligned}
\end{equation}

Since the right hand side in the previous inequality is increasing in $t$, we have that
\begin{equation}
    \begin{aligned}
    \label{e:010}
    &\sup_{t\in [0,\tau_M\wedge T]}\int_{\mathbb{R}_+} \big(1+z\big)\,Z^\zeta_t(\mathrm{d}z)
    \\
    &\leq
    \int_{\mathbb{R}_+} \big(1+z\big)\,Z^\zeta_0(\mathrm{d}z)
    +\int_{[0,\tau_M\wedge T]\times \N \times[0,r]} \1_{\{1,...,\langle 1,Z^\zeta_{s-}\rangle\}}(i)\,
    N(\mathrm{d}s,\mathrm{d}i,\mathrm{d}x)
    \\
    &+
    \int_{[0,\tau_M\wedge T]\times\mathbb{N}\times\mathbb{R}_+} \zeta \1_{\{1,...,\langle 1,Z^\zeta_{s-}\rangle\}}(i)\1_{[r,r+(\zeta^{-1}\sigma+K)\zeta^{-1}z_{i,s-}]}(x)\,
    N(\mathrm{d}s,\mathrm{d}i,\mathrm{d}x).
\end{aligned}
\end{equation}

Thus, for every $\zeta \in (0,1]$,
\begin{equation}
    \begin{aligned}
    \label{e:011}
    &\E\big[\sup_{t\in [0,\tau_M\wedge T]}{\int_{\mathbb{R}_+}\big(1+z\big)}\,Z^\zeta_t(\mathrm{d}z) \big]
    \\
    &\leq
    \E\big[{\int_{\mathbb{R}_+}\big(1+z\big)}\,Z^\zeta_0(\mathrm{d}z) \big]
    +r\int_{[0,T]} \E\big[{\int_{\mathbb{R}_+}\big(1+z\big)}\,Z^\zeta_{s-}(\mathrm{d}z)  \big]\,  \mathrm{d}s
    +\int_{[0,T]} \E\big[\sum\nolimits_{i=1}^{\langle 1, Z^\zeta_{s-}\rangle}(\zeta^{-1}\sigma+K)z_{i,s-}\big]\,\mathrm{d}s
    \\
    &=
    \E\big[{\int_{\mathbb{R}_+}\big(1+z\big)}\,Z^\zeta_0(\mathrm{d}z) \big]
    +\big(r+\zeta^{-1}\sigma+K\big)\int_{[0,T]} \E\big[{\int_{\mathbb{R}_+}\big(1+z\big)}\,Z^\zeta_{s-}(\mathrm{d}z)  \big]\,\mathrm{d}s
    \\
    &\leq
    \E\big[{\int_{\mathbb{R}_+}\big(1+z\big)}\,Z^\zeta_0(\mathrm{d}z) \big]
    +\big(r+\zeta^{-1}\sigma+K\big)\int_{[0,T]} \E\big[\sup_{u\in [0,s]}
    {\int_{\mathbb{R}_+}\big(1+z\big)}\,Z^\zeta_{u-}(\mathrm{d}z) \big]\,\mathrm{d}s
\end{aligned}
\end{equation}

It follows from the Gronwall inequality that for all $m\in\mathbb{N}$,
    \begin{equation}
    \label{e:012}
    \E\big[\sup_{t\in [0,\tau_M\wedge T]}{\int_{\mathbb{R}_+}\big(1+z\big)}\,Z^\zeta_t(\mathrm{d}z) \big]
    \leq
    \E\big[{\int_{\mathbb{R}_+}\big(1+z\big)}\,Z^\zeta_0(\mathrm{d}z) \big]\cdot e^{(r+\zeta^{-1}\sigma+K)T}
    <
    \infty.
\end{equation}

This implies in particular that the sequence of stopping times $\tau_M$ tends to infinity as $m\to\infty$.
It follows  that for every fixed value $0<\zeta\leq 1$ the process $Z^\zeta=(Z^\zeta_t)_{t\geq 0}$ is well-defined. \smallskip

\noindent Also, taking expectation in (\ref{Eq:phi(Z)}) we obtain
\begin{equation}\label{Eq:Ephi}
    \begin{aligned}
    &\E\big[\phi (Z^\zeta_t)\big]-\E\big[\phi (Z^\zeta_0)\big]
    \\
    &= r\int_{0}^{t} \E\Big[\int_{\mathbb{R}_+}  \sum_{k=1}^{\zeta^{-1}z} p^\theta_{k,z_{i,s-}} \Big(\phi\big( Z^\zeta_{s-}+\delta_{k}+\delta_{z-k} -\delta_{z}\big) -\phi(Z^\zeta_{s-}) \Big) Z_s(\mathrm{d}z)\Big]\,\mathrm{d}s
    \\
    &\,
    +\big(\zeta^{-1}\sigma+K\big)\zeta^{-1}\int_{0}^{t} \E\Big[ \int_{\mathbb{R}_+} z\Big(\phi \big(Z^\zeta_{s-}+\delta_{z+\zeta}-\delta_{z}\big) - \phi(Z^\zeta_{s-})\Big)Z_s(\mathrm{d}z)\Big]\,\mathrm{d}s
    \\
    &\,
    +\zeta^{-1}z\int_{0}^{t} \E\Big[\int_{\mathbb{R}_+} \big(\zeta^{-1}\sigma+\lambda(z-\zeta)\big)\Big( \phi\big(Z^\zeta_{s-}+\delta_{z-\zeta}-\delta_{z}\big) - \phi(Z^\zeta_{s-})\Big)\Big]\, Z_s(\mathrm{d}z)\,\mathrm{d}s.
\end{aligned}
\end{equation}

Finally taking derivative in $t=0$  in equation (\ref{Eq:Ephi}) it is clear that $Z^\zeta=(Z^\zeta_t)_{t\ge 0}$ has the claimed generator.
\end{proof}

\begin{definition}[$2$-level branching particle model]
In the following we refer to $Z^\zeta$ as the $2$-level branching particle model with competition and cell division.
\label{Def:002}
\end{definition}


%

\subsection{The fast evolving two-level particle model}
\label{Sub:particlef}
In order to model the dynamics of a virus population, we want to take into consideration that particles of microorganisms have a small individual mass and replicate very fast in comparison to the splitting rate of the hosting cells. For this reason we next look for limit points in our family of population processes $\{ Z^\zeta;\,\zeta>0\}$, for which simultaneously the birth and death rates were sped up by a factor $\zeta^{-1}$, as $\zeta\to 0$.

The next tightness result will be proved in Section \ref{S:existence}.
\begin{proposition}[tightness]
    Let $\theta \in [0,1]$, $r>0$, $\sigma>0$, $\lambda \geq 0$, $K>0$ and $\{ Z^\zeta;\,\zeta>0\}$
    defined as before. Then the family of processes $\{ Z^\zeta;\,\zeta>0\}$ is tight.
    \label{P:002}
\end{proposition}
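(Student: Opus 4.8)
The plan is to establish tightness of $\{Z^\zeta;\,\zeta>0\}$ in the Skorokhod space $D_{\mathcal M_f(\R_+)}([0,\infty))$, with $\mathcal M_f(\R_+)$ carrying the weak topology, via a standard criterion for measure-valued càdlàg processes (in the spirit of Jakubowski's criterion): it suffices to verify (i) a compact containment condition, uniform in $\zeta$, and (ii) tightness in $D_\R([0,\infty))$ of the one-dimensional projections $\big(\langle\varphi,Z^\zeta\rangle\big)_{\zeta>0}$ for $\varphi$ running over a point-separating, addition-stable family of test functions — concretely for $\varphi\equiv1$ and for $\varphi\in\mathcal C^2_c(\R_+)$ (twice continuously differentiable, compact support), together with their sums. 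Both parts reduce to uniform-in-$\zeta$ moment estimates. The single point requiring care is the scaling: the per-particle birth and death rates are $\zeta^{-1}\sigma$, so the generator carries jump rates of order $\zeta^{-2}$, and the crude bounds in the proof of Proposition~\ref{Prop:WellDef} (whose constant is of order $e^{\zeta^{-1}\sigma T}$) are useless here; the virus-birth and the natural-death contributions must be kept together so that their $\zeta^{-1}\sigma$ terms cancel exactly before any estimate is made.

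\emph{Step 1 (moment bounds and compact containment).} Working as in the proof of Proposition~\ref{Prop:WellDef} — localizing at the stopping times $\tau_M$ of (\ref{e:020}), applying the Dynkin formula, and then letting $M\to\infty$ — I would test the generator against $\nu\mapsto\langle 1,\nu\rangle$ and $\nu\mapsto\langle z,\nu\rangle$. Since cells never die, $\langle 1,Z^\zeta_t\rangle$ is non-decreasing and jumps by $1$ at total rate at most $r\langle 1,Z^\zeta_t\rangle$, hence is stochastically dominated by a Yule process of rate $r$, and $\E\big[\sup_{t\le T}\langle 1,Z^\zeta_t\rangle\big]=\E\big[\langle 1,Z^\zeta_T\rangle\big]\le\E\big[\langle 1,Z^\zeta_0\rangle\big]e^{rT}$, with no $\zeta$-dependence. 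Cell divisions preserve the total virus mass $\langle z,\cdot\rangle$, and once virus birth and virus death are combined the $\zeta^{-1}\sigma$ rates cancel, leaving the drift $\sum_i\big(K-\lambda(z_i-\zeta)\big)z_i=K\langle z,Z^\zeta_t\rangle-\lambda\langle z^2,Z^\zeta_t\rangle+\lambda\zeta\langle z,Z^\zeta_t\rangle\le(K+\lambda)\langle z,Z^\zeta_t\rangle$ for $\zeta\le1$; consequently $e^{-(K+\lambda)t}\langle z,Z^\zeta_t\rangle$ is a nonnegative supermartingale, so by the maximal inequality $\PP\big(\sup_{t\le T}\langle z,Z^\zeta_t\rangle\ge b\big)\le e^{(K+\lambda)T}\E\big[\langle z,Z^\zeta_0\rangle\big]/b$, uniformly in $\zeta$. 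Since $\Gamma_{a,b}:=\{\nu\in\mathcal M_f(\R_+):\langle 1,\nu\rangle\le a,\ \langle z,\nu\rangle\le b\}$ is weakly compact (bounded total mass, and $\nu([n,\infty))\le b/n$), choosing $a,b$ large yields $\inf_{\zeta}\PP\big(Z^\zeta_t\in\Gamma_{a,b}\text{ for all }t\le T\big)\ge1-\eta$, i.e.\ compact containment.

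\emph{Step 2 (tightness of the projections).} Compensating the Poisson integrals in (\ref{Eq:DiscrModel}) (equivalently, starting from (\ref{Eq:phi(Z)})) yields, for each admissible $\varphi$, a semimartingale decomposition $\langle\varphi,Z^\zeta_t\rangle=\langle\varphi,Z^\zeta_0\rangle+A^{\zeta,\varphi}_t+M^{\zeta,\varphi}_t$, where $A^{\zeta,\varphi}$ has density $\big(\Omega_{\mbox{\tiny $2$-level}}^\zeta\langle\varphi,\cdot\rangle\big)(Z^\zeta_s)$ and $M^{\zeta,\varphi}$ is a martingale whose predictable quadratic variation has density equal to the sum over all possible jumps of $(\text{rate})\times(\text{increment of }\langle\varphi,\cdot\rangle)^2$. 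For $\varphi\equiv1$ only cell divisions contribute, and both densities are $\le r\langle 1,Z^\zeta_s\rangle$. For $\varphi\in\mathcal C^2_c(\R_+)$, the cell-division part is $\le3r\|\varphi\|_\infty\langle 1,Z^\zeta_s\rangle$, while in the virus part — after combining birth and death so the $\zeta^{-1}\sigma$ rates cancel — a second-order Taylor expansion of $\varphi(z\pm\zeta)-\varphi(z)$ leaves only terms of the form $z\varphi''$, $z\varphi'$ and $z(z-\zeta)\varphi'$ evaluated near $z$, which are bounded and, since $\varphi'$ and $\varphi''$ have compact support, supported in a bounded set of $z$-values; hence both densities are again $\le C(\varphi)\langle 1,Z^\zeta_s\rangle$. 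Therefore, for every stopping time $\tau\le T$ and every $\gamma\le\delta$,
$\E\big[|A^{\zeta,\varphi}_{\tau+\gamma}-A^{\zeta,\varphi}_\tau|\big]+\E\big[\langle M^{\zeta,\varphi}\rangle_{\tau+\gamma}-\langle M^{\zeta,\varphi}\rangle_\tau\big]\le C(\varphi)\,\delta\,\sup_{\zeta}\E\big[\sup_{s\le T+1}\langle 1,Z^\zeta_s\rangle\big]\to0$ as $\delta\to0$, uniformly in $\zeta$; and $\{\langle\varphi,Z^\zeta_t\rangle:\zeta\}$ is tight in $\R$ for each fixed $t$ because $|\langle\varphi,Z^\zeta_t\rangle|\le\|\varphi\|_\infty\langle 1,Z^\zeta_t\rangle$. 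By the Aldous--Rebolledo criterion, $\big(\langle\varphi,Z^\zeta\rangle\big)_\zeta$ is tight in $D_\R([0,\infty))$ for each such $\varphi$ (hence also for their sums), and together with the compact containment of Step 1 this proves the proposition.

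The main (and essentially the only) obstacle is the one flagged above: because the generator's jump rates diverge like $\zeta^{-2}$, one must never estimate the virus-birth and virus-death terms separately but always retain the exact $\zeta^{-1}\sigma$ cancellation, and one should test only against compactly supported $\varphi$ so that the logistic term $\lambda z_i^2$ — which would otherwise force uniform control of $\langle z^2,Z^\zeta_t\rangle$ — is automatically tamed. With those two observations the required moment bounds reduce to the elementary Yule domination and a one-line supermartingale argument, the rest being a routine Aldous--Rebolledo computation; the only input on the initial data is the uniform bound $\sup_\zeta\E[\langle 1+z,Z^\zeta_0\rangle]<\infty$.
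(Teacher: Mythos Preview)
Your argument is correct. The overall architecture — compact containment via control of $\langle 1,\cdot\rangle$ and $\langle z,\cdot\rangle$, then Aldous--Rebolledo for the one-dimensional projections — is the same as the paper's (the paper's proof is Proposition~\ref{Prop:tightness}, resting on Propositions~\ref{P:005} and~\ref{P:001}), but within that framework you make two genuinely different choices.

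First, for compact containment the paper establishes the full hierarchy of moment bounds $\sup_{\zeta}\E[\langle 1+z^p,Z^\zeta_t\rangle]<\infty$ via Gronwall (Proposition~\ref{P:005}); you instead use the Yule domination for $\langle 1,\cdot\rangle$ and a one-line supermartingale argument for $\langle z,\cdot\rangle$. Second, in the Aldous--Rebolledo step the paper tests against $h\in\mathcal C^2_b(\R_+)$, so the logistic drift produces a $\langle 1+z^2,Z^\zeta_s\rangle$ term (equation~(\ref{e:041})), which forces the use of the heavier moment machinery of Proposition~\ref{P:005}(ii). By restricting to compactly supported $\varphi$, you kill the $z^2\varphi'$ term for free and everything reduces to $\langle 1,Z^\zeta_s\rangle$. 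The upshot is that your proof needs only $\sup_\zeta\E[\langle 1+z,Z^\zeta_0\rangle]<\infty$, whereas the paper's route implicitly needs fourth moments of the initial data. Conversely, the paper's moment hierarchy pays off elsewhere: it extends verbatim to the mass-dependent splitting rate $r(z)=\bar r(1+z^p)$ of Corollary~\ref{Cor:Divxp}, where your Yule domination for the cell count would break down.

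One small wording issue: in Step~2 you say that for the quadratic variation ``after combining birth and death the $\zeta^{-1}\sigma$ rates cancel''. They do not — both contributions to $[M^{\varphi,\zeta}]$ are nonnegative. What actually happens (and what the paper writes in~(\ref{[Mh]})) is that each squared increment $(\varphi(z\pm\zeta)-\varphi(z))^2$ is already $O(\zeta^2)$, which absorbs the $\zeta^{-2}\sigma$ prefactor without any cancellation. Your conclusion that the density is $\le C(\varphi)\langle 1,Z^\zeta_s\rangle$ is correct; just adjust the justification.
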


\subsection{The martingale problem}
\label{Sub:martingale}
In this subsection we present an analytic representation of the limit process in terms of a martingale problem.

We begin by introducing a class of suitable test functions, which describes the procedure of sampling $m$ different cells (if there are at least $m$ cells) and then evaluating the total number of cells together with the mass in each of the sampled cells.
\begin{definition}[polynomials] A polynomial is a function $F^{g,f,m}:\,{\mathcal N}_f(\mathbb{R}_+)\to\mathbb{R}$ of the form
\begin{equation}
\label{ASAW-b:015}
   F^{g,f,m}(\nu)
   = g\big(\langle 1,\nu\rangle\big)\int_{(\mathbb{R}_+)^m}
   f(\underline{z})\,\nu^{\otimes m,\downarrow}(\mathrm{d}\underline{z}),
\end{equation}
where $m\in\mathbb{N}$, $g\in{\mathcal C}_b(\mathbb{R}_+)$, $f\in{\mathcal C}_b(\mathbb{R}_+^m)$,
and with
\begin{equation}
\label{ASAW-b:018}
   \nu^{\otimes m,\downarrow}(\mathrm{d}\underline{z})=\nu(\mathrm{d}z_1)\big(\nu(\mathrm{d}z_2)-\delta_{z_1}(\mathrm{d}z_2)\big)... \big(\nu(\mathrm{d}z_m)-\sum_{j=1}^{m-1}\delta_{z_j}(\mathrm{d}z_m)\big).
\end{equation}
\label{Def:001}
\end{definition}

Denote by ${\mathcal D}$ the algebra generated by these monomials, and consider for each $k\in\mathbb{N}$ the subspace
\begin{equation}
\label{e:017}
   {\mathcal D}^{0,k}:=\big\{F=F^{g,f,m}:\,f\in{\mathcal C}^k(\mathbb{R}_+^m)\big\}.
\end{equation}
Notice that ${\mathcal D}$ and ${\mathcal D}^{0,k}$ are convergence determining. Too see this observe that the classes are linear combinations of test functions which evaluate samples taking in an i.d.d.-fashion and that the latter class is closed under multiplication and apply, for example, \cite[Theorem~2.7]{Loehr2013}.

Consider the operator $\Omega_{\mbox{\tiny $2$-level}}$ acting on
\begin{equation}
\label{e:DOmega}
   {\mathcal D}(\Omega_{\mbox{\tiny $2$-level}}):=\big\{F\in{\mathcal D}^{0,2}:\,\Omega_{\mbox{\tiny $2$-level}} F\mbox{ is well-defined and finite}\big\},
\end{equation}
where
\begin{equation}
\label{y:001}
\begin{aligned}
   \Omega_{\mbox{\tiny $2$-level}}
  &:=
   \Omega^{(r,\theta)}_{\mbox{\tiny cell split}}
  +
   \Omega^{(\sigma,K,\lambda)}_{\mbox{\tiny virus branching}}
\end{aligned}
\end{equation}
is reflecting
\begin{enumerate}
\item $\Omega^{(r,\theta)}_{\mbox{\tiny cell split}}$ the changes due to splitting of cells.
\item $\Omega^{(\sigma,K,\lambda)}_{\mbox{\tiny virus branching}}$ the changes of virus mass inside cells due to branching with competition of virus particles.
\end{enumerate}

We introduce the parts step by step:\smallskip

\noindent {\bf Step~1 (Changes of virus mass inside cells due to splitting) } Put
\begin{equation}
\label{y:005}
\begin{aligned}
   &\Omega^{(r,\theta)}_{\mbox{\tiny cell split}}F^{g,f,m}(\nu)
   \\
   &:=
   r \int_{\mathrm{R}_+}\Big(g\big(1+\langle 1,\nu\rangle\big) F^{1,f,m}\big(\nu-\delta_{z_0}+\delta_{\theta z_0}+\delta_{(1-\theta)z_0}\big)-g\big(\langle 1,\nu\rangle\big) F^{1,f,m}(\nu)\Big)\,\nu(\mathrm{d}z_0).
\end{aligned}
\end{equation}

\smallskip

\noindent {\bf Step~2 (Changes of virus mass inside cells due to branching with competition) } Put
\begin{equation}
\label{y:004}
\begin{aligned}
   &\Omega^{\sigma,K,\lambda}_{\mbox{\tiny virus branching}}F^{g,f,m}(\nu)
   \\
   &:=
   g\big(\langle 1,\nu\rangle\big)\int\sum_{j=1}^{m} \big(z_j(K-\lambda z_j)\tfrac{\partial}{\partial z_j}f(\underline{z})
    + \sigma z_j \tfrac{\partial^2}{\partial {z_j}^2}f(\underline{z})\big)\,\nu^{\otimes m,\downarrow}(\mathrm{d}\underline{z}).
\end{aligned}
\end{equation}\smallskip

We are now in a position to state our first main result which will be proven in the end of Section~\ref{S:uniqueness}.
\begin{theorem}[well-posed martingale problem]
Let $P\in{\mathcal M}_1({\mathcal N}_f(\mathbb{R}_+))$ with
\begin{equation}
\label{e:088}
   \int_{{\mathcal N}_f(\mathbb{R}_+)}\int_{\mathbb{R}_+} \big(1+z^{2}\big)\,Z_0(\mathrm{d}z)\,P(\mathrm{d}Z_0)<\infty.
\end{equation}
Then the
$(\Omega,{\mathcal D}(\Omega),P)$-martingale problem is well-posed.

Moreover, if $Z$ is the unique solution of the $(\Omega,{\mathcal D}(\Omega),P)$-martingale problem
and $\{Z^\zeta;\,\zeta>0\}$ is a tight family of virus population models with cell division such that ${\mathcal L}(Z_0^\zeta)\Tzetao P$, then
\begin{equation}
\label{e:089}
Z^\zeta\Tzetao Z,
\end{equation}
weakly in the Skorohod space ${\mathcal D}([0,\infty);{\mathcal N}_f(\mathbb{R}_+)$).
\label{T:001}
\end{theorem}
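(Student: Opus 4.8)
\emph{Strategy.} The plan is to get existence of a solution from the tightness of Proposition~\ref{P:002} together with a uniform convergence of generators, to get uniqueness from a Feynman--Kac duality, and then to deduce the convergence \eqref{e:089} of the whole family from the two; throughout write $\Omega=\Omega_{\mbox{\tiny $2$-level}}$. For existence, fix $P$ satisfying \eqref{e:088}, pick initial conditions with ${\mathcal L}(Z^\zeta_0)\Tzetao P$ and $\sup_{0<\zeta\le1}\E[\langle 1+z^2,Z^\zeta_0\rangle]<\infty$, and use Proposition~\ref{P:002} to obtain a relatively compact family, so that $Z^{\zeta_n}\Rightarrow Z$ along some sequence $\zeta_n\downarrow0$. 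By Proposition~\ref{Prop:WellDef} each $Z^\zeta$ solves the martingale problem for $\Omega^\zeta_{\mbox{\tiny $2$-level}}$, hence for every polynomial $F\in{\mathcal D}(\Omega)$ the process $F(Z^\zeta_t)-F(Z^\zeta_0)-\int_0^t\Omega^\zeta_{\mbox{\tiny $2$-level}}F(Z^\zeta_s)\,\mathrm{d}s$ is a martingale. To pass to the limit I would assemble: (i) a uniform-in-$\zeta$ bound $\sup_{0<\zeta\le1}\E[\sup_{t\le T}\langle1+z^2,Z^\zeta_t\rangle]<\infty$, obtained exactly as in \eqref{e:009}--\eqref{e:012} but carrying the test function $\nu\mapsto\langle1+z^2,\nu\rangle$ through the stochastic equation and applying Gronwall — this is where \eqref{e:088} enters and it also supplies the uniform integrability used below; (ii) the estimate that $\Omega^\zeta_{\mbox{\tiny $2$-level}}F\to\Omega F$ as $\zeta\to0$ uniformly on $\{\langle1+z^2,\nu\rangle\le R\}$, with error dominated by a fixed polynomial in $\langle1+z^2,\nu\rangle$ — the content of Section~\ref{S:ConvGen}, in which the deterministic limiting split $z_0\mapsto(\theta z_0,(1-\theta)z_0)$ emerges as a law of large numbers for the binomial masses $\zeta\,\mathrm{Bin}(\zeta^{-1}z_0,\theta)$; and (iii) a localization by the stopping times $\tau_M$ of \eqref{e:020} so that, using (i)--(ii), Skorohod representation and continuity of the path functionals involved, one may let $\zeta_n\to0$ in the martingale identity. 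The outcome is that $Z$ solves the $(\Omega,{\mathcal D}(\Omega),P)$-martingale problem.

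\emph{Uniqueness.} This is the heart of the matter and the step I expect to be the main obstacle. I would construct a dual Markov process $({\mathcal Y}_t)_{t\ge0}$ whose state records finitely many ``tagged cells'' each carrying a nonnegative weight, organized by a partition remembering which tags currently sit in a common cell, a duality function $H\colon{\mathcal N}_f(\mathbb{R}_+)\times(\text{dual space})\to\mathbb{R}$ of exponential type — a weighted sum, over injective assignments of the partition blocks to cells sampled from $\nu$, of products of factors $\exp(-w\,z)$ — and a Feynman--Kac potential $\beta$, such that
\begin{equation*}
   \Omega\,H(\,\cdot\,,{\mathcal Y})(Z)
   =
   \widehat\Omega\,H(Z,\,\cdot\,)({\mathcal Y})+\beta(Z,{\mathcal Y})\,H(Z,{\mathcal Y}),
\end{equation*}
where $\widehat\Omega$ generates $({\mathcal Y}_t)$. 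Here the within-cell weights evolve by the exponential/Riccati dual of the logistic Feller diffusion $\mathrm{d}z=z(K-\lambda z)\,\mathrm{d}t+\sqrt{2\sigma z}\,\mathrm{d}W$ of \cite{HutzenthalerWakolbinger2007,GrevenSturmWinterZaehle,Foucart2019}, with the quadratic competition term $\lambda z^2$ producing both a branching--coalescing mechanism on the tags and the potential $\beta$; the cell-splitting part $\Omega^{(r,\theta)}_{\mbox{\tiny cell split}}$ is dualized, following ideas of \cite{HermannPfaffelhuber2020}, by a transition that at rate $r$ per dual cell either rescales a block's weight by $\theta$ and $1-\theta$ into two blocks or merges blocks, while the $g(\langle1,\nu\rangle)$ prefactor is matched through the dependence of $H$ on the current number of tagged cells. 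Verifying the displayed identity is a term-by-term generator computation; the delicate points will be making the $\lambda z^2$-term close against $\beta$ and getting the two-level combinatorics of the splitting dual exactly right. Granting the identity and a moment bound legitimizing the exchange, one obtains for every solution $Z$ with initial law $P$
\begin{equation*}
   \E_{Z_0}\!\big[H(Z_t,{\mathcal Y}_0)\big]
   =
   \E_{{\mathcal Y}_0}\!\Big[H(Z_0,{\mathcal Y}_t)\,\exp\!\Big(\int_0^t\beta(Z_0,{\mathcal Y}_s)\,\mathrm{d}s\Big)\Big].
\end{equation*}
Since functions of this exponential type separate points and are measure-determining on ${\mathcal N}_f(\mathbb{R}_+)$, the right-hand side pins down ${\mathcal L}(Z_t)$ for every $t$; the Markov property together with the standard fact that a duality relation determining the one-dimensional marginals forces uniqueness of the martingale problem then gives uniqueness, and, combined with the existence part, well-posedness.

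\emph{Convergence of the family.} Finally, let $\{Z^\zeta;\,\zeta>0\}$ be any tight family of virus population models with cell division with ${\mathcal L}(Z^\zeta_0)\Tzetao P$. Applying the existence argument along an arbitrary subsequence shows that every subsequential limit of $\{Z^\zeta\}$ solves the $(\Omega,{\mathcal D}(\Omega),P)$-martingale problem, which by the uniqueness part has the single solution $Z$. Hence all subsequential limits coincide in law with $Z$, so $Z^\zeta\Tzetao Z$ in ${\mathcal D}([0,\infty);{\mathcal N}_f(\mathbb{R}_+))$, which is \eqref{e:089}.
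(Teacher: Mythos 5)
Your overall architecture is exactly the paper's: existence from tightness (Proposition~\ref{P:002}, with the second moment hypothesis \eqref{e:088} feeding the uniform moment bounds of Proposition~\ref{P:005}) plus uniform convergence of generators on the class ${\mathcal L}$ (Proposition~\ref{P:generators}, yielding Proposition~\ref{Prop:Conv}); uniqueness from a Feynman--Kac duality with exponential duality functions that are convergence determining (Remark~\ref{Rem:003}, Proposition~\ref{P:003}); and then the standard subsequence argument for \eqref{e:089}. The existence and convergence paragraphs are sound and essentially coincide with what the paper does.

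The genuine gap is in the uniqueness step, which you rightly call the heart of the matter but leave as an unverified blueprint --- and the blueprint, as written, points the computation in a direction where it would not close. You attribute the Feynman--Kac potential $\beta$ and a ``branching--coalescing mechanism on the tags'' to the competition term $\lambda z^2$. In the actual duality (Theorem~\ref{T:003}) the competition term contributes neither: the within-cell generator closes \emph{exactly} against independent dual logistic Feller diffusions in which the roles of $\sigma$ and $\lambda$ are interchanged (the forward competition coefficient $\lambda$ becomes the dual diffusion coefficient, and the forward diffusion coefficient $\sigma$ becomes the dual quadratic drift; this is the Hutzenthaler--Wakolbinger self-duality, see \eqref{e:034}), with no potential and no change in the number of tags. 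The potential $\exp\bigl(r\int_0^t q_sM_s^2\,\mathrm{d}s\bigr)$ and the merging of tags both come from the \emph{cell-splitting} generator acting on the sampling measure $\nu^{\otimes m,\downarrow}$: when a sampled cell splits, the off-diagonal terms in which both daughters are resampled produce the coalescence jumps \eqref{ASAW-b:027}--\eqref{ASAW-b:027b} and the $m^2$ cross terms that form the potential (see \eqref{e:014b}). Separately, your statement that the $g(\langle 1,\nu\rangle)$ prefactor is ``matched through the dependence of $H$ on the current number of tagged cells'' is not enough: the total cell number grows as a Yule process and must be dualized by an additional coordinate $q\in(0,1]$ evolving deterministically by $\dot q=-rq(1-q)$, entering the duality function as the factor $q^{\langle 1,\nu\rangle}$ (Lemma~\ref{L:001}); the number of tags $m$ plays a different role. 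Until the dual is specified with these ingredients and the generator identity is checked term by term, the uniqueness claim --- and hence well-posedness and \eqref{e:089} --- is not established.
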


\begin{definition}[$2$-level branching model] We refer to the solution of the above martingale problem as the {\em $2$-level branching model with cell division and
logistic branching diffusions.}
\label{Def:006}
\end{definition}

\subsection{Results on the long-term behavior}
\label{Sub:longterm}
We conclude this section with a first result concerning the long term behaviour.

The following will be proven in Section~\ref{S:longterm}.  It states that if we start with a virus population living within a single initial cell, then at time $t$ we have $e^{rt}$ times a unit mean exponential random variable many cells, and a typical cell is free of a virus particle.
\begin{proposition}[basic long term behavior]
Let $Z$ be the $2$-level branching model with cell division and
logistic branching diffusions starting in one cell, i.e., $Z_0=\nu$ with $\langle 1,\nu\rangle=k$ for some $k\in\mathbb{N}$, and $X(k,1)$ is a Gamma distributed random walk with parameters $k$ and $1$. Then
\begin{equation}
   e^{-rt}Z_t\Tto X(k,1)\delta_0.
\end{equation}
\label{P:004}
\end{proposition}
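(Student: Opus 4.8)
The plan is to use the Feynman-Kac duality established in Section~\ref{S:uniqueness} and exploit the special structure of the initial condition: a single cell carrying a virus population of some mass. The key observation is that the cell-splitting mechanism, read off from $\Omega^{(r,\theta)}_{\mbox{\tiny cell split}}$ in \eqref{y:005}, is exactly a Yule process for the number of cells $\langle 1,Z_t\rangle$, run at rate $r$, independently of the virus dynamics inside the cells. Hence $e^{-rt}\langle 1,Z_t\rangle$ converges almost surely (and in $L^1$) to $W\cdot k$, where $W$ is a unit-mean exponential random variable — this is the classical martingale limit for the Yule process started from $k$ individuals, which gives a sum of $k$ i.i.d.\ exponentials, i.e.\ a $\mathrm{Gamma}(k,1)$ variable $X(k,1)$. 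So the cell-number component of the limit is already identified; what remains is to show that the empirical measure of virus masses over cells becomes degenerate at $0$, i.e.\ that a typical cell is virus-free in the limit.

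\textbf{Reduction via duality.} To make the weak convergence $e^{-rt}Z_t\Rightarrow X(k,1)\delta_0$ precise on ${\mathcal N}_f(\mathbb{R}_+)$, I would test against a one-dimensional polynomial $F^{g,f,1}(\nu)=g(\langle 1,\nu\rangle)\int f(z)\,\nu(\mathrm{d}z)$ with $f$ bounded, continuous, and $f(0)=0$; it suffices to show $\E\big[\langle f,Z_t\rangle\big]\to 0$ (and more generally control $\E[g(e^{-rt}\langle 1,Z_t\rangle)\langle f,Z_t\rangle]$), since the total-mass part was handled above and since ${\mathcal D}^{0,2}$ is convergence determining. For the first-moment of the virus mass per cell I would plug the linear observable $\phi(\nu)=\langle \psi,\nu\rangle$ into the martingale problem: only $\Omega^{(\sigma,K,\lambda)}_{\mbox{\tiny virus branching}}$ and the "who carries which fragment" part of $\Omega^{(r,\theta)}_{\mbox{\tiny cell split}}$ contribute, and for the mass functional $\psi(z)=z$ the splitting term vanishes (mass is conserved under $z\mapsto \theta z + (1-\theta)z$), leaving $\tfrac{d}{dt}\E[\langle z,Z_t\rangle] = \E[\langle z(K-\lambda z),Z_t\rangle] \le K\,\E[\langle z,Z_t\rangle]$, so $\E[\langle z,Z_t\rangle]\le \E[\langle z,Z_0\rangle]e^{Kt}$. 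This alone is not enough because $e^{Kt}$ beats nothing; instead I would use the duality relation, which for this model should express $\E[\exp(-\langle \ell, Z_t\rangle)\,(\text{cell factor})]$ in terms of a dual process running the logistic ODE $\dot u = -\sigma u^2 + K u - \lambda(\cdots)$ backwards along a branching tree of cells, exactly as in \cite{Foucart2019,HermannPfaffelhuber2020}. Applied with a test function concentrated near $z>0$, the logistic drift's negative $-\sigma u^2$ term should force the dual solution — and hence the virus mass seen in a tagged cell — to decay, which is the mechanism behind "a typical cell is virus-free."

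\textbf{The tagged-cell argument.} Concretely, I would analyze a single tagged cell line: follow one cell through its splitting history, at each split keeping the fragment that the tagged line inherits. Between splits the virus mass in the tagged cell evolves as a logistic Feller branching diffusion $dY = \sqrt{2\sigma Y}\,dB + (KY - \lambda Y^2)\,dt$; at each split (rate $r$) the mass is multiplied by $\theta$ or $1-\theta$ according to which side the tagged line goes. Since splits happen at a fixed positive rate, the mass is thinned by a factor bounded away from $1$ infinitely often, and in between the logistic diffusion cannot blow up (it has a.s.\ finite paths and, more importantly, the multiplicative losses dominate the exponential growth $e^{Kt}$ over long times once $r\log(1/\max(\theta,1-\theta)) > K$; in general one shows $Y_t\to 0$ in probability by a Lyapunov/comparison estimate). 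Then $\E[\langle f,Z_t\rangle] = e^{rt}\,\E[f(Y_t)]$ by a many-to-one formula for the Yule-indexed system (the number of cells is $e^{rt}$ in mean and each is, in distribution, a tagged cell), and since $f(0)=0$ and $f$ is bounded and continuous, dominated convergence plus the decay of $Y_t$ gives $e^{rt}\E[f(Y_t)]\to 0$ provided the decay is fast enough; if the crude rate is insufficient one upgrades via the duality, which yields the sharp exponential rate for $\E[f(Y_t)]$.

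\textbf{Main obstacle.} The delicate point is the quantitative control of the tagged-cell mass: one needs $\E[f(Y_t)] = o(e^{-rt})$, i.e.\ the virus mass in a tagged cell must decay strictly faster than the cell population grows. The multiplicative thinning at splits helps, but the logistic growth term $KY$ pushes the other way, so for large $K$ the naive estimate fails and one genuinely needs the Feynman-Kac duality to see that the competition term $-\sigma u^2$ (equivalently $-\lambda z^2$, depending on which side of the duality one reads) tames the growth — this is precisely why the paper builds the duality before proving long-term results. I would therefore organize the proof as: (i) identify the cell-number limit via the Yule martingale; (ii) set up the many-to-one / tagged-cell representation; (iii) invoke the duality from Section~\ref{S:uniqueness} to get the sharp decay $\E[f(Y_t)]=o(e^{-rt})$ for $f$ bounded continuous with $f(0)=0$; (iv) combine via dominated convergence and the convergence-determining property of ${\mathcal D}^{0,2}$ to conclude $e^{-rt}Z_t\Rightarrow X(k,1)\delta_0$.
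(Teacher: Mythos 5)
Your overall route---identify the cell count limit via the Yule process, reduce the ``typical cell is virus-free'' statement to a single tagged/dual particle via the duality of Section~\ref{S:uniqueness}, and combine through a convergence-determining class---is essentially the route the paper takes: the paper applies (\ref{ASAW-duality}) with $q=1$ and $m=1$, so that $q_t\equiv 1$, $M_t\equiv 1$, the Feynman--Kac factor is exactly $e^{rt}$, and after dividing by $e^{rt}$ everything reduces to showing that the one-particle dual $X(t)$ tends to $0$, together with the Yule scaling limit of Corollary~\ref{Cor:002} for the Gamma factor.

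There is, however, a concrete error in your reduction, and it is the source of the ``main obstacle'' you then wrestle with. You assert that it suffices to show $\E[\langle f,Z_t\rangle]\to 0$ for bounded continuous $f$ with $f(0)=0$, which via your many-to-one formula $\E_{\nu}[\langle f,Z_t\rangle]=e^{rt}\int\E_{z}[f(Y_t)]\,\nu(\mathrm{d}z)$ becomes the requirement $\E[f(Y_t)]=o(e^{-rt})$. But the object that must vanish is $\langle f,e^{-rt}Z_t\rangle=e^{-rt}\langle f,Z_t\rangle$, whose expectation is just $\int\E_z[f(Y_t)]\,\nu(\mathrm{d}z)$; so all that is needed is $\E[f(Y_t)]\to 0$, i.e.\ convergence of the tagged-cell mass (equivalently of the dual particle) to $0$ in probability, with no rate at all. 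The condition $\E[f(Y_t)]=o(e^{-rt})$ is not only unnecessary but typically false, and the duality cannot deliver it: with $q=1$, $m=1$ it yields precisely
\begin{equation*}
e^{-rt}\,\E_{\nu}\Big[\int_{\mathbb{R}_+}e^{-zx}\,Z_t(\mathrm{d}z)\Big]
=\int_{\mathbb{R}_+}\E_{x}\big[e^{-zX(t)}\big]\,\nu(\mathrm{d}z),
\end{equation*}
a statement about the \emph{normalized} first moment and nothing sharper. Once the $e^{-rt}$ normalization is restored, your step~(iii) is vacuous and the argument closes exactly as in the paper. The one genuinely delicate point---which the paper itself passes over with a bare ``$\to 1$''---is why $X(t)\to 0$: for $\lambda>0$ this follows from almost sure extinction of the logistic Feller diffusion with drift $Kx-\sigma x^2$, reinforced by the multiplicative down-jumps; your parameter-balance worry about large $K$ versus $r\log(1/\max(\theta,1-\theta))$ is only relevant in the degenerate case $\lambda=0$, where the dual motion between jumps is a deterministic ODE.
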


\section{The compact containment condition}
\label{S:existence}
Let for each $\zeta>0$, $Z^\zeta$ be the {\em virus population model with competition and cell division} from Subsection~\ref{Sub:particle} (in particular, (\ref{Eq:DiscrModel})). In this section we verify the compact containment condition, i.e., for all $T\ge 0$ and $\varepsilon\in(0,1)$ there exists a compact subset $\Gamma=\Gamma_{T,\epsilon}\subset{\mathcal M}_f(\mathbb{R}_+)$ such that
\begin{equation}
\label{equ:cc}
  \inf_{\zeta>0} \mathbb{P}\big(\big\{Z_t^\zeta \not\in \Gamma_{T,\epsilon} \mbox{ for all } t \in [0,T] \big\}\big) \geq 1-\epsilon.
\end{equation}

For that we will
make use of the following moment bounds:
\begin{proposition}[uniform moment bounds]
Let $Z^\zeta:=(Z^\zeta_t)_{t\in[0,\infty)}$
be the virus population model with competition and cell division. Fix $p\in\mathbb{N}$ and $T>0$.
\begin{itemize}
\item[(i)] There exist a constant $C_p>0$ such that for all $\zeta\in(0,1]$ and $t\in[0,T]$,
\begin{equation}
\label{e:022}
   \E\big[\int_{\mathbb{R}_+}\big(1+z^p\big)\,Z_{t}^\zeta(\mathrm{d}z)\big]
  \leq
   \E\big[\int_{\mathbb{R}_+}\big(1+z^p\big)\,Z_{0}^\zeta(\mathrm{d}z)\big]\cdot  e^{C_p t}.
\end{equation}

In particular, if $\sup_{\zeta\in(0,1]}\E\big[\int_{\mathbb{R}_+}\big(1+z^p\big)\,Z_{t}^\zeta(\mathrm{d}z)\big]<\infty$, then
 \begin{equation}
 \label{e:021}
    \sup_{\zeta\in(0,1]}\sup_{t\in [0,T]}\E\big[\int_{\mathbb{R}_+}\big(1+z^p\big)\,Z_{t}^\zeta(\mathrm{d}z)\big]< \infty.
    \end{equation}
\item[(ii)]
Assume that sequence of initial conditions $\{Z_0^\zeta;\,\zeta \in [0,1]\}$ satisfies that
\begin{equation}
\label{e:018}
        \sup_{\zeta\in (0,1]}\E\Big[\int_{\mathbb{R}_+} \big(1+z^{2p}\big)\,Z_0^\zeta(\mathrm{d}z)\Big] <\infty.
\end{equation}
Then
 \begin{equation}
\label{e:019}
   \sup_{\zeta \in (0,1]} \E\Big[\sup_{t\in [0,T]}\int_{\mathbb{R}_+} \big(1+z^{p}\big)\,Z_t^\zeta(\mathrm{d}z)\Big]<\infty.
\end{equation}
\end{itemize}
\label{P:005}
\end{proposition}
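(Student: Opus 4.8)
The plan is to run a Gronwall argument on $t\mapsto\E[\phi_p(Z^\zeta_t)]$, where $\phi_p(\nu):=\int_{\mathbb{R}_+}(1+z^p)\,\nu(\mathrm{d}z)$, after establishing a generator estimate that is \emph{uniform in} $\zeta\in(0,1]$. Concretely, the heart of the proof is to produce a constant $C_p=C_p(r,\sigma,K,\lambda)$, not depending on $\zeta$, with
\begin{equation}
\label{e:plan-gen}
\Omega^\zeta_{\mbox{\tiny $2$-level}}\phi_p(\nu)\le C_p\,\phi_p(\nu)
\qquad\text{and}\qquad
\big|\Omega^\zeta_{\mbox{\tiny $2$-level}}\phi_p(\nu)\big|\le C_p\,\phi_{p+1}(\nu).
\end{equation}
I would prove \eqref{e:plan-gen} by evaluating $\Omega^\zeta_{\mbox{\tiny $2$-level}}$ on $\phi_p$ term by term, using three elementary inequalities valid for $\zeta\in(0,1]$, $z\ge 0$, $p\ge 1$: (a) superadditivity $a^p+b^p\le(a+b)^p$ for $a,b\ge0$, so that the cell-division increment $1+(\zeta k)^p+(z-\zeta k)^p-z^p$ of $\phi_p$ lies in $[1-z^p,1]$ and hence $\Omega^{(r,\theta),\zeta}_{\mbox{\tiny cell split}}\phi_p(\nu)\in[-r\langle1,\nu\rangle,\,r\langle1,\nu\rangle]$; (b) $0\le(z+\zeta)^p-z^p\le p\zeta(1+z)^{p-1}$, which after absorbing the $\zeta^{-1}$ turns the $K$-birth part $K\zeta^{-1}\int z\big((z+\zeta)^p-z^p\big)\nu(\mathrm{d}z)$ into something bounded by $C_p\langle(1+z)^p,\nu\rangle$; and (c) the second-order bound $0\le(z+\zeta)^p+(z-\zeta)^p-2z^p\le C_p\zeta^2(1+z^{p-2})$ (the left side being identically $0$ for $p\le1$), which controls the critical $\sigma$-part $\sigma\zeta^{-2}\int z\big((z+\zeta)^p+(z-\zeta)^p-2z^p\big)\nu(\mathrm{d}z)$, the decisive point being that the $\zeta^{-2}$ is cancelled by the $\zeta^2$. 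The competition-death part equals $\lambda\zeta^{-1}\int z(z-\zeta)\big((z-\zeta)^p-z^p\big)\nu(\mathrm{d}z)\le0$ (here $\lambda\ge 0$), so it is simply dropped for the first bound in \eqref{e:plan-gen}, while for the two-sided bound one uses $(z-\zeta)^p-z^p\ge-p\zeta z^{p-1}$, which is exactly what produces the single extra power in $\phi_{p+1}$.

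Granting \eqref{e:plan-gen}, part~(i) is immediate. Localizing at $\tau_M$ from \eqref{e:020} exactly as in the proof of Proposition~\ref{Prop:WellDef} (on $[0,\tau_M]$ both $\langle1,\cdot\rangle$ and the total mass are bounded by $M+1$, hence every cell mass is $\le M+1$, so $\phi_p$ and all coefficients are bounded there), Dynkin's formula gives
\begin{equation*}
\E\big[\phi_p(Z^\zeta_{t\wedge\tau_M})\big]=\E\big[\phi_p(Z^\zeta_0)\big]+\E\Big[\int_0^{t\wedge\tau_M}\Omega^\zeta_{\mbox{\tiny $2$-level}}\phi_p(Z^\zeta_s)\,\mathrm{d}s\Big]\le\E\big[\phi_p(Z^\zeta_0)\big]+C_p\int_0^t\E\big[\phi_p(Z^\zeta_{s\wedge\tau_M})\big]\,\mathrm{d}s,
\end{equation*}
and Gronwall yields $\E[\phi_p(Z^\zeta_{t\wedge\tau_M})]\le\E[\phi_p(Z^\zeta_0)]e^{C_pt}$, uniformly in $M$ and $\zeta$. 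Letting $M\to\infty$ (recall $\tau_M\to\infty$ a.s.\ by Proposition~\ref{Prop:WellDef}) and applying Fatou gives \eqref{e:022}, and \eqref{e:021} follows by taking suprema.

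For part~(ii) I would use the martingale decomposition $\phi_p(Z^\zeta_{t\wedge\tau_M})=\phi_p(Z^\zeta_0)+\int_0^{t\wedge\tau_M}\Omega^\zeta_{\mbox{\tiny $2$-level}}\phi_p(Z^\zeta_s)\,\mathrm{d}s+M^{\zeta,M}_{t\wedge\tau_M}$, obtained by taking $\phi=\phi_p$ in \eqref{Eq:phi(Z)} (stopped at $\tau_M$), where $M^{\zeta,M}$ is the integral of the jump of $\phi_p$ against the compensated Poisson measure, and estimate
\begin{equation*}
\E\Big[\sup_{t\le T}\phi_p(Z^\zeta_{t\wedge\tau_M})\Big]\le\E\big[\phi_p(Z^\zeta_0)\big]+\E\Big[\int_0^{T\wedge\tau_M}\big|\Omega^\zeta_{\mbox{\tiny $2$-level}}\phi_p(Z^\zeta_s)\big|\,\mathrm{d}s\Big]+\E\Big[\sup_{t\le T}\big|M^{\zeta,M}_{t\wedge\tau_M}\big|\Big].
\end{equation*}
The first term is finite and uniform in $\zeta$ by \eqref{e:018} (since $z^p\le1+z^{2p}$); the second is $\le C_p\int_0^T\E[\phi_{2p}(Z^\zeta_s)]\,\mathrm{d}s<\infty$, uniformly in $\zeta$, by the second bound in \eqref{e:plan-gen} (note $\phi_{p+1}\le\phi_{2p}$ for $p\ge1$) combined with part~(i) applied with exponent $2p$ and \eqref{e:018}; and by Doob's $L^2$-inequality (or Burkholder--Davis--Gundy) the third term is $\le2\,\E[\langle M^{\zeta,M}\rangle_{T\wedge\tau_M}]^{1/2}$, where the predictable bracket has density equal to the sum over jump types of $(\text{rate})\times(\text{jump of }\phi_p)^2$. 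The one point to check is that this density is $\le C_p\,\phi_{2p}(Z^\zeta_s)$: the $\zeta^2$ from squaring first-order increments of $z\mapsto z^p$ cancels the $\zeta^{-2}$ (resp.\ $\zeta^{-1}$) in the sped-up rates, e.g.\ the natural-death contribution is $\sigma\zeta^{-2}z\big((z-\zeta)^p-z^p\big)^2\le\sigma p^2z^{2p-1}$, the competition-death contribution is $\le\lambda p^2z^{2p}$, and the $K$-birth and cell-division contributions are $\le C_p(1+z^{2p})$, so at most the $2p$-th moment appears. Hence $\E[\langle M^{\zeta,M}\rangle_{T\wedge\tau_M}]\le C_pT\sup_{\zeta\in(0,1]}\sup_{s\le T}\E[\phi_{2p}(Z^\zeta_s)]<\infty$, uniformly in $\zeta$ and $M$; letting $M\to\infty$ and using Fatou gives \eqref{e:019}.

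The main obstacle is precisely the uniform-in-$\zeta$ estimate \eqref{e:plan-gen} and its analogue for the quadratic variation: one must verify that every negative power of $\zeta$ hidden in the fast virus rates is exactly matched by a power of $\zeta$ coming from finite differences of $z\mapsto z^p$ (first order for the $K$-drift and for the martingale bracket, second order for the drift of the critical $\sigma$-branching), and that the superadditive cell-division increment of $\phi_p$ is bounded by $1$; once these are in hand, the remainder is a routine localization-and-Gronwall argument together with the observation that part~(ii) only needs the $2p$-th initial moment because nothing of higher order than $\phi_{2p}$ enters the bracket.
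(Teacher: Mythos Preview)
Your proposal is correct and follows essentially the same route as the paper: for (i), both bound $\Omega^\zeta_{\mbox{\tiny $2$-level}}\phi_p$ by $C_p\phi_p$ via the same three elementary inequalities (superadditivity of $z\mapsto z^p$ for the cell split, first- and second-order Taylor bounds for the $K$- and $\sigma$-parts, nonpositivity of the competition term), localize at $\tau_M$, and apply Gronwall; for (ii), both use the semimartingale decomposition, bound the drift by $\phi_{2p}$-moments and the martingale bracket by $C_p\phi_{2p}$ (the squared first-order increments producing the $\zeta^2$ that cancels the fast rates), and invoke BDG together with part~(i) at exponent $2p$. Your write-up is in fact a bit tighter in places (you use $a^p+b^p\le(a+b)^p$ to get the cell-split increment $\le1$, whereas the paper only records the weaker bound $\le z^p$), but the architecture is identical.
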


\begin{proof}
{\em (i) }    Using the equality (\ref{Eq:Ephi}) we have that for any $p\geq 1$
\begin{equation}\label{Eq:xp}
    \begin{aligned}
    &\E\big[\int_{\mathbb{R}_+}\big(1+z^p\big)\,Z_{t}^\zeta(\mathrm{d}z)\big]
    \\
    &=
    \E\big[\int_{\mathbb{R}_+}\big(1+z^p\big)\,Z^\zeta_{0}(\mathrm{d}z)\big]
    + r\int_{0}^{t} \mathbb{E}\big[\int_{\mathbb{R}_+}  \sum_{k=1}^{\zeta^{-1}z} p^\theta_{k,z_{i,s-}} \big\{
    1+k^p+(\zeta^{-1}z-k)^p -(\zeta^{-1}z)^p\big\}\, Z^\zeta_s(\mathrm{d}z)\big]\,\mathrm{d}s
    \\
    &+\zeta^{-1}\big(\zeta^{-1}\sigma+K\big)\int_{0}^{t} \E\big[\int_{\mathbb{R}_+} z\big\{\big(z+\zeta\big)^p-z^p\big\}\,Z^\zeta_s(\mathrm{d}z)\big]\,\mathrm{d}s
    \\
    &+\zeta^{-2}\sigma \int_{0}^{t} \E\big[\int_{\mathbb{R}_+} z\big\{ \big(z-\zeta\big)^p-z^p\big\}\,Z^\zeta_s(\mathrm{d}z)\big]\,\mathrm{d}s
    \\
    &+\zeta^{-1}\lambda\int_{0}^{t} \E\big[\int_{\mathbb{R}_+} (z-\zeta)z\big\{ \big(z-\zeta\big)^p-z^p\big\}\,Z^\zeta_s(\mathrm{d}z)\big]\,\mathrm{d}s.
\end{aligned}
\end{equation}

     Note that $x\mapsto x^p$ is an increasing function, hence  the last term is negative and  for each $k\in \{1,2,..., \zeta^{-1}z\}$, $k^p+ (\zeta^{-1}z-k)^p-(\zeta^{-1}z)^p\leq (\zeta^{-1}z)^p$ . This implies that
\begin{equation}
    \begin{aligned}
    \label{e:023}
    &\E\big[\int_{\mathbb{R}_+}\big(1+z^p\big)\,Z_{t}^\zeta(\mathrm{d}z)\big]
    \\
    &\le
    \E\big[\int_{\mathbb{R}_+}\big(1+z^p\big)\,Z_{0}^\zeta(\mathrm{d}z)\big]
    \\
    &\;+ r\int_0^t \E\big[\int_{\mathbb{R}_+}\big(1+z^p\big)\,Z_{s}^\zeta(\mathrm{d}z)\big] \,\mathrm{d}s
    +\zeta^{-1}K\int_{0}^{t} \E\big[\int_{\mathbb{R}_+}z\big\{\big(z+\zeta\big)^p-z^p\big\}\,Z_{s}^\zeta(\mathrm{d}z)\big]\,\mathrm{d}s
    \\
    &+\zeta^{-2}\sigma \int_{0}^{t} \E\big[\int_{\mathbb{R}_+} z \big\{\big(z+\zeta\big)^p+\big(z-\zeta\big)^p-2z^p\big\}\,Z_{s}^\zeta(\mathrm{d}z)\big] \,\mathrm{d}s
\end{aligned}
\end{equation}

Notice also that $(z+\zeta)^p-z^p \leq \zeta C_{p,1}  (1+z)^{p-1} $ and $(z+\zeta)^p+(z-\zeta)^p -2x^p\leq \zeta^2 C_{p,2}  (1+z)^{p-2}$, so that
\begin{equation}
\label{Eq:Bound}
    \begin{aligned}
     &\E\big[\int_{\mathbb{R}_+}\big(1+z^p\big)\,Z_{t}^\zeta(\mathrm{d}z)\big]
    \\
    &\le
    \E\big[\int_{\mathbb{R}_+}\big(1+z^p\big)\,Z_{0}^\zeta(\mathrm{d}z)\big] + \big(r+KC_{p,1}+\sigma C_{p,2}\big)
    \int_0^t \E\big[\int_{\mathbb{R}_+}\big(1+z^p\big)\,Z_{s}^\zeta(\mathrm{d}z)\big] \,\mathrm{d}s.
\end{aligned}
\end{equation}

Therefore the Gronwall inequality implies that
 \begin{equation}
    \begin{aligned}
    \label{e:024}
    \E\big[\int_{\mathbb{R}_+}\big(1+z^p\big)\,Z_{t}^\zeta(\mathrm{d}z)\big]\leq \E\big[\int_{\mathbb{R}_+}\big(1+z^p\big)\,Z_{0}^\zeta(\mathrm{d}z)\big]\cdot e^{(r+KC_{p,1}+\sigma C_{p,2})t}.
\end{aligned}
\end{equation}

It follows that
  \begin{equation}
  \label{e:025}
    \sup_{\zeta\in(0,1]} \sup_{t\in [0,T]}\E\big[\int_{\mathbb{R}_+}\big(1+z^p\big)\,Z_{t}^\zeta(\mathrm{d}z) \big]< \infty.
    \end{equation}\smallskip

\noindent {\em (ii) }   Proceeding as in \cite{BansayeTran2011}, from the expression of the generator and from Doob's representation theorem we obtain for any $h\in \mathcal{C}_b(\R_+)$  and $\zeta\in (0,1]$ the following  local martingales
    \begin{equation}
    \begin{aligned}
    \label{Expr:M}
    M^{h,\zeta}_t
    &=
    \langle h, Z^\zeta_t \rangle- \langle h, Z^\zeta_0 \rangle
    -r\int_0^t \int_{\mathbb{R}_+} \sum_{k=0}^{\zeta^{-1}z}  p^\theta_{k,\zeta^{-1}z}
    \big\{ h(\zeta k) +h(x-\zeta k) -h(x)\big\}\,Z^\zeta_s(\mathrm{d}z) \,\mathrm{d}s
    \\
    &-\big(\zeta^{-1}\sigma+K\big) \zeta^{-1} \int_0^t  \int_{\mathbb{R}_+}   z \big\{  h(z+\zeta) -h(z) \big\}\, Z^\zeta_s(\mathrm{d}z) \,\mathrm{d}s
    \\
    &- \zeta^{-1} \int_0^t  \int_{0}^{\infty}  \big(\zeta^{-1}\sigma +\lambda (z-\zeta)\big)z \big\{  h(z-\zeta) -h(z)  \big\} Z_s^\zeta(\mathrm{d}z)\,\mathrm{d}s
    \end{aligned}
    \end{equation}
with quadratic variation given by
\begin{equation}
    \begin{aligned}
    \label{Expr:[M]}
    [M^{h,\zeta}]_t
    &=  r\int_0^t \int_{\mathbb{R}_+}\sum_{k=1}^{\zeta^{-1}z} p^\theta_{k,\zeta^{-1}z} \big\{ h(\zeta k) +h(z-\zeta k) -h(z)\big\}^2 \,Z^\zeta_s(\mathrm{d}z) \,\mathrm{d}s
    \\
    &+\big(\zeta^{-1}\sigma+K\big) \zeta^{-1}\int_0^t  \int_{\mathrm{R}_+}   z \big\{h(z+\zeta) -h(z) \big\}^2\, Z^\zeta_s(\mathrm{d}z)
    \,\mathrm{d}s
    \\
    &+\zeta^{-1}\int_0^t  \int_{\mathrm{R}_+}  \big(\zeta^{-1}\sigma +\lambda (z-\zeta)\big) z \big\{ h(z-\zeta) -h(z)  \big\}^2 \, Z_s^\zeta(\mathrm{d}z)\,\mathrm{d}s.
    \end{aligned}
    \end{equation}

 In particular, for any $p\geq 1$, $\zeta \in [0,1]$  and  $\tau^p_M=\inf\{t\geq 0:\,\int_{\mathrm{R}_+} (1+z^p) Z^\zeta_t(\mathrm{d}z)>M\}$, $M\geq 1$,
 \begin{equation}
    \begin{aligned}
    \label{e:026}
     &\int_{\mathrm{R}_+}\big(1+z^p\big) \,Z^\zeta_{t\wedge \tau^p_M}(\mathrm{d}z)
     \\
     &= M^{p,\zeta}_{t\wedge \tau^p_M}
     + \int_{\mathrm{R}_+}\big(1+z^p\big) \,Z^\zeta_{0}(\mathrm{d}z)
     +r\int_0^{t\wedge \tau^p_M} \int_{\mathrm{R}_+} \sum_{k=0}^{\zeta^{-1}z} \big\{ 1+ (\zeta k)^p+ (z-\zeta k)^p -z^p\big\}\,Z^\zeta_s(\mathrm{d}z) \,\mathrm{d}s
     \\
    &+\big(\zeta^{-1}\sigma+K\big) \zeta^{-1}\int_0^{t\wedge \tau^p_M} \int_{\mathrm{R}_+}   z \big\{\big(z+\zeta\big)^p -z^p \big\} \, Z^\zeta_s(\mathrm{d}z) \,\mathrm{d}s
    \\
    &+\int_0^{t\wedge \tau^p_M}  \int_{\mathrm{R}_+}  \big(\zeta^{-1}\sigma +\lambda (z-\zeta)\big)\zeta^{-1} z \big\{  \big(z-\zeta\big)^p -z^p \big\}  Z_s^\zeta(\mathrm{d}z)\,\mathrm{d}s
    \end{aligned}
    \end{equation}
is a semi-martingale stopped at the stopping time $\tau^p_M$. Since $z\mapsto z^p$ is an increasing function, $(z-\zeta)^p -z^p$ is negative and for any $k\in \{1,2,...,\zeta x\}$, $(\zeta k)^p+ (z-\zeta k)^p -z^p\leq z^p$. Doing a similar procedure as the one we did in equation (\ref{Eq:Bound}), one can see that
    \begin{equation}
    \begin{aligned}
    \label{e:027}
        &\sup_{t\in [0,T]}\int_{\mathrm{R}_+}\big(1+z^p\big)\, Z^\zeta_{t\wedge \tau^p_M}(\mathrm{d}z)
        \\
     &\leq
        \sup_{t\in [0,T]} M^{p,\zeta}_{t\wedge \tau_M}
     +  \int_{\mathrm{R}_+}\big(1+z^p\big)\, Z^\zeta_{0}(\mathrm{d}z)
     + B_{p,T} \sup_{t\in [0,T]}
     \sup_{t\in [0,T]}\int_{\mathrm{R}_+}\big(1+z^p\big)\, Z^\zeta_{t\wedge \tau^p_M}(\mathrm{d}z)
    \end{aligned}
    \end{equation}

Observe that due to Burholder-Davis-Gundis inequality,
    \begin{equation}
    \begin{aligned}
    \label{e:028}
    \E\big[\sup_{t\in [0,T]} M^{p,\zeta}_{t\wedge \tau^p_M}\big]
    \leq
    \E\big[\sup_{t\in [0,T]} M^{p,\zeta}_t\big]
    \leq \E\big[[M^{p,\zeta}]_T^{1/2}\big]
    \leq
    1+\E\big[[M^{p,\zeta}]_T\big]
    \end{aligned}
    \end{equation}
where the last inequality is due to the fact that $z^{1/2}\leq z$ for $z\geq1$. Also the quadratic variation of $M^{p,\zeta}$ satisfies for every times $0\leq T_1\leq T_2$
    \begin{equation}
    \begin{aligned}\label{Expr:[M^p]}
    &[M^{p,\zeta}]_{T_1}-[M^{p,\zeta}]_{T_2}
    \\
    &\leq
    4r\int_{T_1}^{T_2}  \int_{\mathrm{R}_+}\big(1+z^{2p}\big)\, Z^\zeta_{s}(\mathrm{d}z) \,\mathrm{d}s
    + \big(\zeta^{-1}\sigma+K\big) \zeta^{-1} \int_{T_1}^{T_2}  \int_{\mathrm{R}_+} z C_{1,p}\zeta^2 z^{2p-2}\, Z^\zeta_s(\mathrm{d}z) \,\mathrm{d}s
    \\
    &+\zeta^{-2}\sigma \int_{T_1}^{T_2}\int_{\mathrm{R}_+} z C_{2,p} z^{2p-2}\zeta^2\,  Z_s^\zeta(\mathrm{d}z)\,\mathrm{d}s
    \\
    &+
    \zeta^{-1}\lambda\int_{T_1}^{T_2}  \int_{0}^{\infty} z^2 C_{2,p} z^{2p-2}\zeta^2\,  Z_s^\zeta(\mathrm{d}z)\,\mathrm{d}s.
    \end{aligned}
    \end{equation}

Thus
\begin{equation}
 \begin{aligned}
 \label{Expr:E[M^p]}
    \E\big[[M^{p,\zeta}]_{T}\big]
    &\leq
    C_p\int_{0}^{T} \E\big[\int_{\mathrm{R}_+} \big(1+z^{2p-1}\big)\,Z^\zeta_s(\mathrm{d}z)\big] \,\mathrm{d}s
    +\zeta D_p\int_{0}^{T}  \E\big[\big(1+z^{2p}\big)\,Z_s^\zeta(\mathrm{d}z)\big] \,\mathrm{d}s
    \\
    &\leq
    C_p T \sup_{s\in [0,T]}\E\big[\int_{\mathrm{R}_+}\big(1+z^{2p-1}\big)\,Z^\zeta_s(\mathrm{d}z)\big]
    \\
    &+
    \zeta D_p T \sup_{s\in [0,T]}\E\big[\int_{\mathrm{R}_+}\big(1+z^{2p}\big)\,Z^\zeta_s(\mathrm{d}z)\big]
    \end{aligned}
    \end{equation}

This allow us to deduce that there exit a constant $A_{p,T}$ depending on $p$ and $T$, such that
    \begin{equation}
    \label{e:029}
     \sup_{\zeta\in (0,1]}\E\big[\sup_{t\in [0,T]}\int_{\mathrm{R}_+} \big(1+z^p\big)\,Z^\zeta_{t\wedge \tau^p_M}(\mathrm{d}z)\big]
     \leq
     A_{p,T} \sup_{\zeta\in (0,1]} \sup_{s\in [0,T]}\E\big[\int_{\mathrm{R}_+}\big(1+z^{2p}\big) Z^\zeta_s(\mathrm{d}z)\big]
     < \infty.
\end{equation}

Finally, notice that as a consequence $\tau^p_M\to\infty$, almost surely, as $M\to\infty$.
\end{proof}

\begin{corollary}[mass dependent splitting rate]
     Let $\bar r>0$, $b>0$, $\sigma>0$, $K>0$ and $r:\mathbb{R}_+\to\mathbb{R}_+$ such that $r(z)\leq \bar r(1+z^p)$ for all $z\in\mathbb{R}_+$. Let $Z^\zeta_0$ be a $(\mathcal{N}_f(\zeta \N))$-valued random variable with $\E\big[\int_{\mathbb{R}_+}\big(1+z^{2p}\big)\,Z^\zeta_0(\mathrm{z})\big]<\infty$. Then there exist a unique solution to equation (\ref{Eq:DiscrModel}) (with $r$ replaces by $r(\boldsymbol{\cdot})$). This solution is a Markov process $Z^\zeta=(Z^\zeta_t)_{t\ge 0}$ with values in $\mathcal{N}_f(\zeta \N)$. Its generator acts on functions  $\phi\in \mathcal{C}^{b}_{N_f(\zeta\N)}(\R_+)$ as follows:
\begin{equation}
    \begin{aligned}
    \label{Eq:PartGener4}
    \Omega^\zeta \phi(\nu)
    &=
    \int_{\mathbb{R}_+}  r(z)\sum_{k=0}^{\zeta^{-1} z} \big(\phi(\nu-\delta_{z}+\delta_{z-\zeta k}+\delta_{\zeta k})-\phi(\nu)\big) p^\theta_{k,\zeta^{-1} z}\, \nu(\mathrm{d}z)
    \\
    &\,+  \big(\zeta^{-1}\sigma+K\big) \zeta^{-1}\int_{\mathbb{R}_+} z \big(\phi(\nu-\delta_{z} + \delta_{z+\zeta}\big) -\phi(\nu)\big)\, \nu(\mathrm{d}z)
    \\
    &\,+ \zeta^{-1} \int_{\mathbb{R}_+} \big((\zeta^{-1}\sigma +\lambda (x-\zeta)\big)z \big(\phi(\nu-\delta_{z} + \delta_{z-\zeta} ) -\phi(\nu)  \big) \nu(\mathrm{d}z).
    \end{aligned}
    \end{equation}
     \label{Cor:Divxp}
\end{corollary}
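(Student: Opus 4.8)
The plan is to mimic the proof of Proposition~\ref{Prop:WellDef}, localizing at the stopping times $\tau_M:=\inf\{t>0:\int_{\mathbb{R}_+}(1+z)\,Z^\zeta_t(\mathrm{d}z)>M\}$ (as in~(\ref{e:020})) and then to show, via a moment estimate in the spirit of Proposition~\ref{P:005}(i), that $\tau_M\uparrow\infty$ almost surely. First I would observe that on $\{t\le\tau_M\}$ there are at most $M$ cells and each of them carries mass at most $M$, so that $r(z)\le\bar r(1+z^p)\le\bar r(1+M^p)$ and hence all the coefficients of the analogue of equation~(\ref{Eq:DiscrModel}) --- where $r$ is replaced by $r(z_{i,s-})$ and the first coordinate of the Poisson point process ranges over $[0,\bar r(1+M^p)]$ --- are bounded up to time $\tau_M$. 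Exactly as in Proposition~\ref{Prop:WellDef} this produces a pathwise unique, (strong) Markov solution up to $\tau_M$, and patching over $M$ gives a unique solution on $[0,\tau_\infty)$ with $\tau_\infty:=\lim_M\tau_M$.

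Next I would carry out the moment computation. Applying the stopped generator to $\phi(\nu)=\int_{\mathbb{R}_+}(1+z^p)\,\nu(\mathrm{d}z)$, the cell-division part contributes $\mathbb{E}\big[\int_{\mathbb{R}_+}r(z)\sum_{k=0}^{\zeta^{-1}z}p^\theta_{k,\zeta^{-1}z}\big(1+(\zeta k)^p+(z-\zeta k)^p-z^p\big)\,Z^\zeta_s(\mathrm{d}z)\big]$, and the key point is the elementary inequality $a^p+b^p\le(a+b)^p$ for $a,b\ge 0$ and $p\ge1$, which makes the bracket at most $1$; combined with $r(z)\le\bar r(1+z^p)$ this bounds the cell-division contribution by $\bar r\,\mathbb{E}\big[\int_{\mathbb{R}_+}(1+z^p)\,Z^\zeta_s(\mathrm{d}z)\big]$. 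The virus-branching terms are treated verbatim as in the proof of Proposition~\ref{P:005}(i), using $(z+\zeta)^p-z^p\le\zeta C_{p,1}(1+z)^{p-1}$ and $(z+\zeta)^p+(z-\zeta)^p-2z^p\le\zeta^2C_{p,2}(1+z)^{p-2}$ and discarding the non-positive competition term. Gronwall's inequality then yields $\mathbb{E}\big[\int_{\mathbb{R}_+}(1+z^p)\,Z^\zeta_{t\wedge\tau_M}(\mathrm{d}z)\big]\le\mathbb{E}\big[\int_{\mathbb{R}_+}(1+z^p)\,Z^\zeta_0(\mathrm{d}z)\big]\,e^{(\bar r+KC_{p,1}+\sigma C_{p,2})t}$, a bound uniform in $M$; since $1+z\le 2(1+z^p)$ for $p\ge1$, a Chebyshev estimate gives $\mathbb{P}(\tau_M\le T)\to0$, hence $\tau_\infty=\infty$ almost surely and the solution is globally defined. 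The stronger hypothesis $\mathbb{E}\big[\int_{\mathbb{R}_+}(1+z^{2p})\,Z^\zeta_0(\mathrm{d}z)\big]<\infty$ moreover allows one to transfer the uniform moment bounds of Proposition~\ref{P:005} to the present setting (with constants now also depending on $\bar r$), since the squared jump sizes entering the quadratic variation of the martingale $M^{h,\zeta}$ with $h=1+z^p$ are controlled by $\bar r(1+z^p)(1+z^{2p})$.

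Finally, for the generator I would repeat the derivation of~(\ref{Eq:phi(Z)})--(\ref{Eq:Ephi}) with $r$ replaced by $r(z_{i,s-})$: taking expectations in the It\^o formula for $\phi(Z^\zeta_t)$ (legitimate by the moment bounds above and dominated convergence, the relevant integrands now being dominated by $\bar r(1+z^p)\|\phi\|_\infty$ together with the usual virus-branching contributions) and differentiating the resulting identity at $t=0$, one reads off precisely~(\ref{Eq:PartGener4}). I expect the only genuine obstacle to be the non-explosion step: one must rule out that the polynomially growing splitting rate induces a faster-than-exponential growth of moments, and this is exactly where the convexity inequality $a^p+b^p\le(a+b)^p$ is indispensable --- it is what keeps the cell-division contribution to the $p$-th moment linear in the $p$-th moment itself rather than pushing it up to order $z^{2p}$, which would break the Gronwall closure.
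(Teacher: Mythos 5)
Your proof is correct and follows the route the paper itself takes: its own proof of this corollary is a one-line deferral to the construction in Proposition~\ref{Prop:WellDef} and the moment bounds of Proposition~\ref{P:005}, which is exactly the localization-plus-Gronwall argument you spell out. In fact you supply the one detail the paper leaves implicit, namely the superadditivity inequality $a^p+b^p\le(a+b)^p$ for $p\ge 1$, which keeps the cell-division contribution under the state-dependent rate $r(z)\le\bar r(1+z^p)$ of order $z^p$ and thereby closes the Gronwall loop for the $p$-th moment.
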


\begin{proof}
    Follows directly from the previous result.
\end{proof}

We next state compact containment condition for our two level branching process with the state dependent splitting rate.
\begin{proposition}[compact containment]
     Let $\theta>0$, $\bar r>0$, $r(x)=\bar r (1+x^p)$, $\sigma>0$, $\lambda \geq 0$, $K>0$, and $(Z^\zeta_t)_{t\geq0}$  defined as before. Then the family $\{Z^\zeta=(Z^\zeta_t)_{t\geq 0};\,\zeta>0\}$ satisfy the compact containment condition.
     \label{P:001}
\end{proposition}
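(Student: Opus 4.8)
The plan is to exhibit $\Gamma_{T,\varepsilon}$ as a sublevel set of the functional $\nu\mapsto\langle 1+z,\nu\rangle$ and to control the probability of leaving it by a uniform moment estimate, which is itself the mass-dependent-rate version of Proposition~\ref{P:005}. First I would recall the standard characterisation of compactness in $\mathcal{M}_f(\mathbb{R}_+)$ with the weak topology: a set $\Gamma$ is relatively compact iff $\sup_{\nu\in\Gamma}\langle 1,\nu\rangle<\infty$ and $\Gamma$ is uniformly tight, $\lim_{R\to\infty}\sup_{\nu\in\Gamma}\nu\big([R,\infty)\big)=0$. For $L>0$ put $\Gamma_L:=\{\nu\in\mathcal{M}_f(\mathbb{R}_+):\langle 1+z,\nu\rangle\le L\}$. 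Since $z\mapsto 1+z$ is continuous and $\ge 1$, the map $\nu\mapsto\langle 1+z,\nu\rangle$ is lower semicontinuous, so $\Gamma_L$ is closed; and on $\Gamma_L$ one has $\langle 1,\nu\rangle\le L$ and $\nu\big([R,\infty)\big)\le R^{-1}\langle z,\nu\rangle\le R^{-1}L$, so $\Gamma_L$ is relatively compact and hence compact. (Any $\langle 1+z^q,\cdot\rangle$, $q\ge1$, would serve equally well; $q=1$ keeps the moment requirement minimal.)

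\textbf{Uniform moment bound.} The core step is to show
\[
   \sup_{\zeta\in(0,1]}\E\Big[\sup_{t\in[0,T]}\langle 1+z,Z^\zeta_t\rangle\Big]=:A_T<\infty .
\]
I would obtain this by repeating the proof of Proposition~\ref{P:005} for the model of Corollary~\ref{Cor:Divxp}: write $\langle 1+z,Z^\zeta_t\rangle$ through its Doob decomposition as in \eqref{e:026}, bound the drift by a Gronwall argument, and bound the martingale part by Burkholder--Davis--Gundy together with the expectation bounds of the analogue of Proposition~\ref{P:005}(i). The only new feature is that the splitting rate $r$ is replaced by $r(z)\le\bar r(1+z^p)$; since a cell division conserves the total virus mass and merely adds one cell, this factor multiplies exactly the "$+1$" in the increment of $\langle 1+z,\cdot\rangle$ (and in its square), producing in both the drift and the bracket a term bounded by $\bar r\langle 1+z^p,\nu\rangle$, which is controlled uniformly in $\zeta$ and locally uniformly in time by Proposition~\ref{P:005}(i) under the standing assumption $\sup_{\zeta}\E[\langle 1+z^{2p},Z^\zeta_0\rangle]<\infty$ (this also dominates the $\langle 1+z^2,\cdot\rangle$ needed from the virus-branching part of the bracket). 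The $\zeta^{-1}$-scaled virus birth and death contributions cancel at leading order precisely as in Proposition~\ref{P:005}, so the same computation closes.

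\textbf{Conclusion.} Given $T\ge 0$ and $\varepsilon\in(0,1)$, set $L:=A_T/\varepsilon$ and $\Gamma_{T,\varepsilon}:=\Gamma_L$. Markov's inequality then gives, for every $\zeta\in(0,1]$,
\[
   \mathbb{P}\big(Z^\zeta_t\notin\Gamma_{T,\varepsilon}\text{ for some }t\in[0,T]\big)=\mathbb{P}\Big(\sup_{t\in[0,T]}\langle 1+z,Z^\zeta_t\rangle>L\Big)\le \frac{A_T}{L}=\varepsilon ,
\]
which is the compact containment condition \eqref{equ:cc}. The one point requiring genuine care is the moment bound: one must check that the Gronwall/BDG bookkeeping of Proposition~\ref{P:005} still closes with the mass-dependent rate $r(z)=\bar r(1+z^p)$ and with rates blown up by $\zeta^{-1}$. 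The reason it does is the mass-conservation of cell division noted above, which prevents $r(z)$ from generating any superlinear growth; choosing the compact set as a sublevel set of $\langle 1+z,\cdot\rangle$ then makes everything else immediate.
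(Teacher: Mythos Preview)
Your proposal is correct and follows essentially the same approach as the paper: reduce compact containment to a Markov inequality fed by the uniform moment estimate $\sup_{\zeta}\E[\sup_{t\le T}\langle 1+z,Z^\zeta_t\rangle]<\infty$ coming from (the mass-dependent-rate analogue of) Proposition~\ref{P:005}. The only cosmetic difference is the choice of compact set: the paper uses $\mathcal{M}_{N_0}([0,a_0])$ (bounded total mass and bounded support) and splits the bad event into two pieces, whereas you take a single sublevel set of $\nu\mapsto\langle 1+z,\nu\rangle$; your formulation is arguably cleaner and you are more explicit than the paper about why the Gronwall/BDG bookkeeping still closes when $r$ is replaced by $r(z)=\bar r(1+z^p)$.
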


\begin{proof}
Let $\zeta\in (0,1]$. We recall that sets $\mathcal{M}_{N_0}([0,a_0])$ consisting of the measures bounded by $N_0\in\mathbb{N}$ with support on $[0,a_0]$ are compact sets on $\mathcal{M}_F(\R_+)$ equipped  with the weak topology.
Thus applying the Markov inequality it is clear that
\begin{equation}
    \begin{aligned}
    \label{e:037}
    &\mathbb{P}\big(\big\{\exists\, t\in[0,T]  \mbox{ s.t. } Z_t^\zeta \notin \mathcal{M}_{N_0}([0,a_0])\big\}\big)
    \\
    &\leq
    \mathbb{P}\big(\big\{\exists\, t\in[0,T] \mbox{ s.t. } \langle 1,Z_t^\zeta \rangle\ge  N_0\big\}\big)
    +
    \mathbb{P}\big(\big\{\exists\,
    t\in[0,T], x\in \supp (Z^\zeta_t) \mbox{ s.t } x>a_0\big\}\big)
    \\
    &\leq
    \frac{1}{N_0}\E\big[\sup_{t\in [0,T]} \langle 1, Z_t^\zeta \rangle\big]+ \frac{1}{a_0}\E\big[\sup_{t\in [0,T]} \langle x, Z_t^\zeta\rangle \big].
    \end{aligned}
    \end{equation}
Thus by taking $N_0$ and $a_0$ big enough, we get that the compact containment is satisfied.
\end{proof}

Now we will use Aldous-Rebolledo criterion to prove that the family of process $\{Z^\zeta;\,\zeta \in (0,1]\}$ is tight (see  \cite{JoffeMetivier1986}). Notice that due to \cite[Theorem~3.9.1]{EthierKurtz1986}, it is enough to prove Aldous-Rebolledo criterion only for the process $(\langle h, Z^\zeta_t \rangle)_{t\ge 0}$, where $h \in {\mathcal C}^2_b(\R_+)$.  Before stating the next result we remark that equation applying Taylor's theorem for  the function $h$, (\ref{Expr:[M]}) can be re-written as
\begin{equation}
\begin{aligned}
\label{[Mh]}
    [M^{h,\zeta}]_t
    &=  \int_0^t \int_{\mathbb{R}_+} r(z)\sum_{k=0}^{\zeta^{-1}z}p^\theta_{k,\zeta^{-1} z}\big(h(\zeta k)+h(z-\zeta k)-h(z)\big)^2 \, Z^\zeta_s(\mathrm{d}z) \,\mathrm{d}s
    \\
    &\,
    +\big(\zeta^{-1}\sigma+K\big)\zeta^{-1}\int_0^t  \int_{\mathbb{R}_+}  z\big(\zeta h'(z)+o(\zeta^2) \big)^2\, Z^\zeta_s(\mathrm{d}z) \,\mathrm{d}s
    \\
    &\,
    +\zeta^{-1} \int_0^t  \int_{0}^{\infty}  \big(\zeta^{-1}\sigma +\lambda (z-\zeta)\big)z \big(- \zeta h'(z)+o(\zeta^2)\big)^2 Z_s^\zeta(\mathrm{d}z)\,\mathrm{d}s
    \end{aligned}
    \end{equation}

\begin{proposition}[tightness] If the  assumptions in Corollary~\ref{Cor:Divxp}  are satisfied, then
     the family $\{Z^\zeta;\,\zeta\in(0,1]\}$  is tight.
\label{Prop:tightness}
\end{proposition}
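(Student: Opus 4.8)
The plan is to deduce tightness from the compact containment condition already established in Proposition~\ref{P:001} together with the Aldous--Rebolledo criterion (see \cite{JoffeMetivier1986}). By \cite[Theorem~3.9.1]{EthierKurtz1986} it is enough to verify that criterion for the real-valued projections $(\langle h,Z^\zeta_t\rangle)_{t\ge0}$, $h\in\mathcal{C}^2_b(\mathbb{R}_+)$. For each such $h$ I would use the semimartingale decomposition $\langle h,Z^\zeta_t\rangle=\langle h,Z^\zeta_0\rangle+A^{h,\zeta}_t+M^{h,\zeta}_t$, where $M^{h,\zeta}$ is the local martingale of \eqref{Expr:M} with quadratic variation $[M^{h,\zeta}]$ given in the Taylor form \eqref{[Mh]}, and $A^{h,\zeta}_t$ is the finite-variation drift obtained by integrating the generator in time. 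Tightness of the laws of $\langle h,Z^\zeta_t\rangle$ for fixed $t$ (indeed uniform boundedness in probability over $t\in[0,T]$) is immediate from $|\langle h,Z^\zeta_t\rangle|\le\|h\|_\infty\langle1,Z^\zeta_t\rangle$ and the uniform moment bound of Proposition~\ref{P:005}. What remains is the Aldous condition: for stopping times $0\le\tau_\zeta\le\tau_\zeta+\delta\le T$, both $\E[|A^{h,\zeta}_{\tau_\zeta+\delta}-A^{h,\zeta}_{\tau_\zeta}|]$ and $\E[|[M^{h,\zeta}]_{\tau_\zeta+\delta}-[M^{h,\zeta}]_{\tau_\zeta}|]$ must tend to $0$ as $\delta\to0$, uniformly in $\zeta\in(0,1]$ and in the choice of the stopping times.

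The heart of the argument is to convert the singular expressions for $A^{h,\zeta}$ and $[M^{h,\zeta}]$, which carry prefactors $\zeta^{-1}$ and $\zeta^{-2}$, into integrands dominated by $C_h\,(1+z^{q})$ against $Z^\zeta_s(\mathrm{d}z)$, where $q=\max\{p,2\}$ and $C_h$ depends only on $h$ and the model parameters, not on $\zeta$. This is exactly the purpose of the Taylor rewriting in \eqref{[Mh]}: expanding $h(z\pm\zeta)=h(z)\pm\zeta h'(z)+\tfrac{\zeta^2}{2}h''(z)+R^{\pm}_\zeta(z)$ with $|R^{\pm}_\zeta(z)|\le\|h''\|_\infty\zeta^2$ uniformly in $z\in\mathbb{R}_+$ (using $h\in\mathcal{C}^2_b$), the first-order contributions of the virus birth and death terms cancel, so that $\zeta^{-2}\sigma$ times the surviving second order is $O(\sigma\|h''\|_\infty z)$, while $K\zeta^{-1}$ and $\zeta^{-1}\lambda(z-\zeta)$ times first-order terms are $O(K\|h'\|_\infty z)$ and $O(\lambda\|h'\|_\infty z^2)$ respectively; the cell-splitting contributions are bounded crudely by $3\|h\|_\infty\,r(z)$ (respectively $9\|h\|_\infty^2\,r(z)$ for $[M^{h,\zeta}]$), using $\sum_k p^\theta_{k,\zeta^{-1}z}=1$ and $r(z)\le\bar r(1+z^p)$. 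Performing this bookkeeping for $\zeta\le1$ yields, for all $0\le s_1\le s_2$,
\[
  \big|A^{h,\zeta}_{s_2}-A^{h,\zeta}_{s_1}\big|+\big|[M^{h,\zeta}]_{s_2}-[M^{h,\zeta}]_{s_1}\big|
  \;\le\; C_h\int_{s_1}^{s_2}\!\int_{\mathbb{R}_+}\big(1+z^{q}\big)\,Z^\zeta_u(\mathrm{d}z)\,\mathrm{d}u .
\]
Bounding the inner time-integral by $\delta\,\sup_{u\le T}\int(1+z^{q})\,Z^\zeta_u(\mathrm{d}z)$ and taking expectations gives the Aldous bound
\[
  \E\big[\big|A^{h,\zeta}_{\tau_\zeta+\delta}-A^{h,\zeta}_{\tau_\zeta}\big|\big]+\E\big[\big|[M^{h,\zeta}]_{\tau_\zeta+\delta}-[M^{h,\zeta}]_{\tau_\zeta}\big|\big]\;\le\;C_h\,\delta\;\sup_{\zeta\in(0,1]}\E\Big[\sup_{u\le T}\int_{\mathbb{R}_+}\big(1+z^{q}\big)\,Z^\zeta_u(\mathrm{d}z)\Big],
\]
whose right-hand side is finite by Proposition~\ref{P:005}(ii) and vanishes as $\delta\to0$. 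Together with the fixed-time tightness this gives the Aldous--Rebolledo criterion for $\langle h,Z^\zeta\rangle$; combining it with Proposition~\ref{P:001} and \cite[Theorem~3.9.1]{EthierKurtz1986} yields tightness of $\{Z^\zeta;\,\zeta\in(0,1]\}$ in $\mathcal{D}([0,\infty);\mathcal{M}_f(\mathbb{R}_+))$.

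I expect the main obstacle to be the uniform-in-$\zeta$ domination in the first display: every singular $\zeta^{-1},\zeta^{-2}$ prefactor has to be absorbed, which forces one to control Taylor remainders uniformly on all of $\mathbb{R}_+$ (hence the use of $h\in\mathcal{C}^2_b$) and to ensure that the resulting polynomial weight $1+z^{q}$ is matched by the uniform moment bounds of Proposition~\ref{P:005} -- this is where the moment hypothesis on $\{Z^\zeta_0\}$ in Corollary~\ref{Cor:Divxp} enters. If the available initial moments only just reach the required order, one would first run the whole argument for the processes stopped at $\tau^{q}_M:=\inf\{t\ge0:\,\int_{\mathbb{R}_+}(1+z^{q})\,Z^\zeta_t(\mathrm{d}z)>M\}$, for which the increment bounds are uniform with constant $C_hM$, and then let $M\to\infty$ using that these stopping times diverge, as in the proof of Proposition~\ref{P:005}.
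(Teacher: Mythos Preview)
Your proposal is correct and follows essentially the same route as the paper's own proof: compact containment from Proposition~\ref{P:001}, then Aldous--Rebolledo for the projections $\langle h,Z^\zeta\rangle$ with $h\in\mathcal{C}^2_b$, using the Taylor expansion of \eqref{[Mh]} to absorb the singular $\zeta^{-1},\zeta^{-2}$ prefactors into integrands bounded by $C_h(1+z^{\max\{p,2\}})$, and then invoking the uniform moment bounds of Proposition~\ref{P:005}. Your write-up is in fact a bit more explicit than the paper about why the cancellations work, and your closing remark on localization via $\tau^q_M$ is a sensible safeguard for the moment-counting.
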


\begin{proof} Fix a sequence $(\zeta_n)_{n\in\mathbb{N}}$ with $\zeta_n\tno 0$.
We need to prove that the family of semimartigales $\{(\langle h, Z_t^{\zeta_n} \rangle)_{t\ge 0};\,n\in\mathbb{N}\}$ satisfies the Aldous criterion. Thanks to the Aldous-Rebolledo criterion it is enough to apply Aldous criterion to both, the predictable finite variation process $A^{h,\zeta_n}=(A^{h,\zeta_n}_t)_{t\ge 0}$ with  $A^{h,\zeta_n}_t=\langle h, Z^{\zeta_n}_t\rangle-[M^{h,\zeta_n}]_t$ and the quadratic variation process $([M^{h,\zeta_n}]_t)_{t\ge 0}$. Let $\epsilon>0$ and $\{\tau_{n};\,n\in\mathbb{N}\}$ be a family of stopping times bounded by $T>0$ and $\alpha \in [0,\delta]$. Then by Markov inequality,
\begin{equation}
    \begin{aligned}
    \label{e:038}
    \mathbb{P}\big(\big\{\vert[M^{h,\zeta_n}]_{\tau_n+\alpha} - [M^{h,\zeta_nn}]_{\tau_n}\vert>\epsilon\big\}\big)
    \leq
    \frac{1}{\epsilon}\E\big[\vert[M^{h,\zeta_n}]_{\tau_n+\alpha} - [M^{h,\zeta_n}]_{\tau_n}\vert\big],
    \end{aligned}
    \end{equation}

Applying to (\ref{[Mh]}) similar techniques as in (\ref{Expr:E[M^p]}) and using the fact that  $h\in \mathcal{C}^2_b(\R_+)$, we get
    \begin{equation}
    \begin{aligned}
    \label{e:039}
    &\E\big[\vert[M^{h,\zeta_n}]_{\tau_n+\alpha} - [M^{h,\zeta_n}]_{\tau_n}\vert\big]
    \\
    &\leq
    9\Vert h\Vert_\infty \E\big[\int_{\tau_n}^{\tau_n+\alpha}\int_{\mathbb{R}_+} r(z)\,Z^{\zeta_n}_s(\mathrm{d}z)\,\mathrm{d}s\big]
    +\sigma \Vert h' \Vert_\infty^2\E\big[\int_{\tau_n}^{\tau_n+\alpha} \int_{\mathbb{R}_+} z\,Z^{\zeta_n}_s(\mathrm{d}z)\,\mathrm{d}s\big]
    \\
    &\;+
    \sigma \Vert h' \Vert_\infty^2\E\big[\int_{\tau_n}^{\tau_n+\alpha} \int_{\mathbb{R}_+} z\,Z^{\zeta_n}_s(\mathrm{d}z) \big]+ C_{h,\alpha,\sigma,K} o\big(n^{-1}\big)
    \\
    &\leq
    9\alpha \Vert h\Vert_\infty \E\big[ \sup_{s\in [0,T]}\int_{\mathbb{R}_+} z^p\,Z^{\zeta_n}_s(\mathrm{d}z) \big]
    +2\alpha \sigma \Vert h' \Vert_\infty^2\E\big[ \sup_{s\in [0,T]}\int_{\mathbb{R}_+} z\,Z^{\zeta_n}_s(\mathrm{d}z) \big] + C_{h,\alpha,r,\sigma,K,\lambda}  o\big(n^{-1}\big)
    \end{aligned}
    \end{equation}
which implies that
\begin{equation}
    \begin{aligned}
    \label{e:040}
    \limsup_{n\to \infty}\mathbb{P}\big(\big\{\vert[M^{h,\zeta_n}]_{\tau_n+\alpha} - [M^{h,\zeta_n}]_{\tau_n}\vert>\epsilon\big\}  \big))
    \leq \frac{\delta C_{h,\alpha,r,\sigma,K,\lambda}}{\epsilon}.
    \end{aligned}
    \end{equation}

Taking $\delta$ sufficiently small we obtain the Aldous criterion for the martingale part. In a similar way we find that
    \begin{equation}
    \begin{aligned}
    \label{e:041}
    &\E\big[\vert A^{h,\zeta_n}_{\tau_n+\alpha}- A^{h,\zeta_n}_{\tau_n}\vert\big]
    \\
    &\leq
    3\Vert h\Vert_\infty \E\big[ \int_{\tau_n}^{\tau_n+\alpha} \int_{\mathbb{R}_+} r(z)\,Z^{\zeta_n}_s(\mathrm{d}z) \,\mathrm{d}s\big]
    + \Vert h'\Vert_\infty \E\big[ \int_{\tau_n}^{\tau_n+\alpha} \int_{\mathbb{R}_+}  z(K-\lambda z)\,Z^{\zeta_n}_s(\mathrm{d}z) \,\mathrm{d}s\big]
    \\
    &\;+
    \sigma\Vert h''\Vert_\infty \E\big[ \int_{\tau_n}^{\tau_n+\alpha} \int_{\mathbb{R}_+} z\,Z^{\zeta_n}_s(\mathrm{d}z)\,\mathrm{d}s\big] +C_{h,\alpha,r,\sigma,K,\lambda} \; o\big(n^{-1}\big)
    \\
    \leq& 3\alpha \Vert h\Vert_\infty \E\big[ \sup_{s\in [0,T]} \int_{\mathbb{R}_+} \big(1+z^p\big)\,Z^{\zeta_n}_s(\mathrm{d}z)\, \big]+ \alpha C_{K,\lambda}\Vert h'\Vert_\infty \E\big[ \sup_{s\in [0,T]} \int_{\mathbb{R}_+} \big(1+z^2\big)\,Z^{\zeta_n}_s(\mathrm{d}z)\, \big]
    \\
    &\,
    +\alpha\sigma\Vert h''\Vert_\infty \E\big[ \sup_{s\in [0,T]}\int_{\mathbb{R}_+} z\,Z^{\zeta_n}_s(\mathrm{d}z)\,\,\mathrm{d}s\big] +C_{h,\alpha,r,\sigma,K,\lambda}\;
    o\big(n^{-1}\big)
    \end{aligned}
    \end{equation}
from here it is easy to see, that there exist $\delta>0$ sufficiently small such that
    \begin{align}
    \limsup_{n\to \infty}\mathbb{P}\big(\big\{ \vert A^{h,\zeta_n}_{\tau_n+\alpha} - A^{h,\zeta_n}_{\tau_n}\vert>\epsilon  \big\}\big)\leq \epsilon,
    \end{align}
which concludes the proof.
\end{proof}

\section{Uniform generator convergence}
\label{S:ConvGen}
Recall the family of $2$-level branching particle model with competition and cell division, $\{Z^\zeta;\,\zeta\in(0,1]\}$, from Definition~\ref{Def:002} and their generators $\Omega_{\mbox{\tiny $2$-level}}^{\zeta}=\Omega^{(r,\theta),\zeta}_{\mbox{\tiny cell split}}+\Omega^{(\sigma,K,\lambda),\zeta}_{\mbox{\tiny virus branching}}$ from (\ref{Eq:PartGener1}) to (\ref{Eq:PartGener3}) acting on bounded continuous functions. Furthermore recall the $2$-level branching model with cell division and logistic branching mechanism from Definition~\ref{Def:006} and its generator $(\Omega_{\mbox{\tiny $2$-level}},{\mathcal D}(\Omega_{\mbox{\tiny $2$-level}}))$
from (\ref{e:DOmega}) to (\ref{y:004}).

In this section we prove the uniform convergence of the generators.
Consider for each  $(m,M_m)\in\mathbb{N}\times\mathbb{N}$ and $L\in\mathbb{N}$ the following subspace
 \begin{equation}
 \label{SetConvDet}
 \begin{aligned}
    &\CL_{m,M_m,L}
    \\
    &:=
    \big\{F^{(g,m,f)}\in{\mathcal D}^{0,3}:\,
    g(x)\cdot(x^m\vee x^{m+1})\le M_m;\,
    \\
    &\hspace{4cm}z_i\vee z_i^2\frac{\partial^k}{\partial (z_i)^k}f((z_1,...,z_m))\le L,
    \forall k=1,2,3,(z_1,...,z_m)\in\mathbb{R}^m_+\big\}
    \end{aligned}
\end{equation}
as well as
 \begin{equation}
 \label{SetConvDetb}
    \CL:=\bigcup_{m=1}^\infty\bigcup_{M_m\in\mathbb{N}}\bigcup_{L\in\mathbb{N}}\CL_{m,M_m,L}.
 \end{equation}

 \begin{remark}[particular class] We will later construct a duality relation which used the space ${\mathcal K}$ of functions
 $F=F^{q,m,\underbar x}$ of the following form
\begin{equation}
\label{e:062}
   F^{(q,m,\underbar x)}(\nu):=q^{\langle 1,\nu\rangle}\cdot \int_{\mathbb{R}^m_+} e^{-\sum_{k=1}^m x_k z_k}\,\nu^{\otimes m,\downarrow}(\mathrm{d}(z_1,...,z_m)),
\end{equation}
where $q\in(0,1)$, $m\in\mathbb{N}$ and $(x_1,...,x_m)\in\mathbb{R}_+^m$. Notice that ${\mathcal K}\subset{\mathcal L}$.
\label{Rem:003}
 \hfill$\qed$
 \end{remark}

\begin{proposition}[convergence determining]
The set of functions ${\mathcal K}$ and $\CL$ are convergence determining in $\CM_1 (\CN_f(\R_+))$.
\label{P:003}
\end{proposition}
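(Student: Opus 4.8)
The plan is to deduce the statement from the same principle that the excerpt already invokes for $\CD$ and $\CD^{0,2}$, namely that polynomials built by i.i.d.-type sampling are convergence determining via \cite[Theorem~2.7]{Loehr2013}. First I would reduce to the smaller class: by Remark~\ref{Rem:003} we have ${\mathcal K}\subseteq\CL$, and both classes consist of bounded continuous functions on $\CN_f(\R_+)$ — boundedness follows from the weight conditions in $(\ref{SetConvDet})$ for $\CL$ and from $q^{\langle 1,\nu\rangle}\langle 1,\nu\rangle^m$ being bounded in $\langle 1,\nu\rangle\in\N_0$ (as $q<1$) for ${\mathcal K}$, while continuity holds because $\nu\mapsto\langle 1,\nu\rangle$ is integer-valued and hence locally constant in the weak topology on $\CN_f(\R_+)$, the remaining factors depending continuously on the atom positions. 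Since any class of bounded continuous functions containing a convergence determining subclass is again convergence determining, it suffices to prove the assertion for ${\mathcal K}$.

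Next I would exhibit ${\mathcal K}$ as the polynomial class attached to two elementary families of test functions: $\{n\mapsto q^{n}:q\in(0,1)\}\subseteq{\mathcal C}_b(\N_0)$ in the role of the outer function $g(\langle 1,\cdot\rangle)$, and $\{z\mapsto e^{-xz}:x\ge 0\}\subseteq{\mathcal C}_b(\R_+)$ in the role of the per-coordinate function. Indeed $F^{(q,m,\underline{x})}(\nu)=q^{\langle 1,\nu\rangle}\int_{\R_+^m}\prod_{k=1}^m e^{-x_kz_k}\,\nu^{\otimes m,\downarrow}(\mathrm{d}(z_1,\dots,z_m))$ records $\langle 1,\nu\rangle$, samples $m$ pairwise distinct cells in the fashion encoded by $\nu^{\otimes m,\downarrow}$, and evaluates an $m$-fold product of exponentials on their virus masses — exactly the structure of the monomials generating $\CD$. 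Both families separate points (of $\N_0$, resp.\ $\R_+$), are in fact convergence determining there, and are closed under multiplication, since $q^nq'^n=(qq')^n$ and $e^{-xz}e^{-x'z}=e^{-(x+x')z}$ keep the product inside the respective family. Consequently the span of ${\mathcal K}$ is an algebra: a product $F^{(q,m,\underline{x})}F^{(q',m',\underline{x}')}$ equals $(qq')^{\langle 1,\nu\rangle}$ times a product of two down-monomials, which — by the same combinatorial identity (summation over the coincidence pattern of the sampled indices) that makes $\CD$ an algebra — is a finite linear combination of down-monomials whose test functions are again products of the $e^{-x_k\cdot}$, hence again of the form $e^{-y\cdot}$ with $y\ge 0$. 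From the case $m=1$ one also sees that ${\mathcal K}$ separates points of $\CN_f(\R_+)$: for fixed $q$ the maps $\nu\mapsto q^{\langle 1,\nu\rangle}\langle e^{-x\cdot},\nu\rangle$, $x\ge 0$, recover the finite measure $q^{\langle 1,\nu\rangle}\nu$ via its Laplace transform, and letting $q$ range over $(0,1)$ then distinguishes any two point measures with different numbers of atoms.

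With these properties verified, \cite[Theorem~2.7]{Loehr2013} applies to ${\mathcal K}$ exactly as it was applied to $\CD$ in the excerpt and yields that ${\mathcal K}$ — and therefore $\CL$ — is convergence determining in $\CM_1(\CN_f(\R_+))$. The step that needs genuine attention is that $\CN_f(\R_+)$ splits into the clopen ``levels'' $\CN_f^{=n}$ of point measures with exactly $n$ atoms, each homeomorphic to $\R_+^n/S_n$, so the cited criterion must control both the conditional configuration on each level and the weights of the levels; this is precisely why the prefactor $q^{\langle 1,\nu\rangle}$ is built into the test functions and why all $q\in(0,1)$ are needed. Should one wish to avoid quoting \cite{Loehr2013}, the same decomposition gives a self-contained route: on each level $\CN_f^{=n}$ a measure of total mass $n$ is determined continuously by its Laplace transform, so the restrictions of ${\mathcal K}$ are convergence determining there; varying $q$ forces $P_k(\CN_f^{=n})\to P(\CN_f^{=n})$ for every $n$; and since $\sum_n P(\CN_f^{=n})=1$, weak convergence of $P_k$ to $P$ is assembled from these two facts by Fatou's lemma.
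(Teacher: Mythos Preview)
Your proof is correct and follows essentially the same route as the paper's own argument: verify that the algebra generated by ${\mathcal K}$ separates points and is closed under multiplication, then invoke \cite[Theorem~2.7]{Loehr2013}. You supply considerably more detail than the paper (the explicit check of boundedness and continuity, the combinatorics behind multiplicative closure, the Laplace-transform separation argument, and even a self-contained alternative), but the strategy is the same.
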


\begin{proof} Notice that the sets of functions $\CL$ and $\mathcal K$ defined in (\ref{SetConvDetb}) respectively in Remark~\ref{Rem:003} are  dense sets of $\mathcal{C}(\CN_f(\R_+))$. Moreover,
the algebras generated by the functions in $\CL$ respectively ${\mathcal K}$ are closed under multiplication and separate points from $\CM_1 (\CN_f(\R_+))$. Therefore $\CL$ and ${\mathcal K}$ are convergence determining (use, for example, \cite[Theorem~2.7]{Loehr2013} or \cite[Theorem~1.23]{Meizis2019}).
\end{proof}

The main result reads as follows:
\begin{proposition}[convergence of generators] For all $F\in{\mathcal L}$,
\begin{equation}
\label{e:042}
   \lim_{\zeta\to 0}\sup_{\nu\in{\mathcal N}_f(\zeta\mathbb{N}_f)}\big|\Omega^{\zeta}_{\mbox{\tiny $2$-level}}F(\nu)-\Omega_{\mbox{\tiny $2$-level}} F(\nu)\big|=0.
\end{equation}
\label{P:generators}
\end{proposition}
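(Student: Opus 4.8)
Since every element of $\CL$ is a single monomial $F=F^{g,f,m}\in\CL_{m,M_m,L}$, and since $\Omega^{\zeta}_{\mbox{\tiny $2$-level}}=\Omega^{(r,\theta),\zeta}_{\mbox{\tiny cell split}}+\Omega^{(\sigma,K,\lambda),\zeta}_{\mbox{\tiny virus branching}}$ and $\Omega_{\mbox{\tiny $2$-level}}=\Omega^{(r,\theta)}_{\mbox{\tiny cell split}}+\Omega^{(\sigma,K,\lambda)}_{\mbox{\tiny virus branching}}$, the plan is to bound the two differences separately, in each case producing a bound $C(\sigma,K,\lambda,r,\theta,m,M_m,L)\,\varepsilon(\zeta)$ with $\varepsilon(\zeta)\to 0$ that does not involve $\nu$. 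Two elementary facts are used repeatedly: the total mass of $\nu^{\otimes m,\downarrow}$ equals $\langle 1,\nu\rangle(\langle 1,\nu\rangle-1)\cdots(\langle 1,\nu\rangle-m+1)\le\langle 1,\nu\rangle^m$; and the admissibility bound $g(x)(x^m\vee x^{m+1})\le M_m$ turns $g(\langle 1,\nu\rangle)$ multiplied by such a combinatorial factor into a quantity $\le M_m$ uniformly in $\nu$ --- this is exactly the mechanism that keeps the estimates uniform over configurations that spread their mass among arbitrarily many cells. Likewise the weights $z_i\vee z_i^2$ in $\CL_{m,M_m,L}$ give $z_i|\partial^k_{z_i}f|\le L$ for $k\le 3$ (since $z_i\le z_i\vee z_i^2$), which is what compensates the $\zeta^{-1}$ speed-up and the large-mass decay.

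\textbf{Virus branching.} A virus birth or death does not change the cell number, so the prefactor $g(\langle 1,\nu\rangle)$ is inert and one only expands the factorial part. Replacing a single atom $z$ by $z\pm\zeta$ changes $\nu^{\otimes m,\downarrow}$ only through the ordered $m$-tuples that use this atom, and one checks (using that $\nu^{\otimes m,\downarrow}$ is index-based, so atoms of equal value are still counted separately) that there is no further combinatorial correction: $\Omega^{(\sigma,K,\lambda),\zeta}_{\mbox{\tiny virus branching}}F^{g,f,m}(\nu)$ equals $g(\langle 1,\nu\rangle)$ times a sum over $j=1,\dots,m$ of one-variable lattice expressions of $f$ in the $j$-th coordinate, weighted by $z_j$ and the rates, integrated against $\nu^{\otimes m,\downarrow}$. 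Taylor expanding $f$ to third order in $z_j$ and collecting the birth and death contributions, the two $\zeta^{-2}\sigma$-terms combine into the discrete Laplacian $\zeta^{-2}\sigma z_j\big(f(\dots z_j+\zeta\dots)+f(\dots z_j-\zeta\dots)-2f(\dots z_j\dots)\big)=\sigma z_j\partial^2_{z_j}f+O(\zeta)$, the singular first-order terms cancel, and the $\zeta^{-1}$-terms combine into $z_j(K-\lambda z_j)\partial_{z_j}f+O(\zeta)$, reproducing \eqref{y:004}. Each remainder is $\zeta$ times an expression of the form $z_j|\partial^k_{z_j}f|$ with $k\le 3$ (times the rates), hence $\le C\zeta$ by the $\CL$-weights; multiplying by $m$, by the total mass $\le\langle 1,\nu\rangle^m$ of $\nu^{\otimes m,\downarrow}$, and by $g(\langle 1,\nu\rangle)$, and using $g(x)x^m\le M_m$, yields $\sup_{\nu}\big|\Omega^{(\sigma,K,\lambda),\zeta}_{\mbox{\tiny virus branching}}F(\nu)-\Omega^{(\sigma,K,\lambda)}_{\mbox{\tiny virus branching}}F(\nu)\big|\le C(\sigma,K,\lambda,m,M_m,L)\,\zeta$.

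\textbf{Cell splitting.} The key point is that in \emph{both} generators a split increases the cell number by exactly one, so in \eqref{y:005} the two prefactors $g(1+\langle 1,\nu\rangle)$ and $g(\langle 1,\nu\rangle)$ match those appearing for $\Omega^{(r,\theta),\zeta}_{\mbox{\tiny cell split}}$ and cancel in the difference; what remains is to compare the random split $(\zeta k,z-\zeta k)$, with $k$ distributed as $\mathrm{Bin}(\zeta^{-1}z,\theta)$ --- so with $\E[\zeta k]=\theta z$ and $\mathrm{Var}(\zeta k)=\zeta z\,\theta(1-\theta)$ --- against the deterministic split $(\theta z,(1-\theta)z)$. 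Expanding the factorial measure of $\nu-\delta_z+\delta_y+\delta_{z-y}$ into the parts using none, one, or both of the two daughter atoms exhibits $g(1+\langle 1,\nu\rangle)^{-1}F^{1,f,m}(\nu-\delta_z+\delta_y+\delta_{z-y})$ as a $\mathcal C^2$ function $\Psi_z(y)$ on $[0,z]$ whose first two derivatives are linear combinations of (pure and mixed) derivatives of $f$ of order $\le 2$, evaluated at arguments in $[0,z]$ and integrated against $\mu^{\otimes(m-1),\downarrow}$ and $\mu^{\otimes(m-2),\downarrow}$ with $\mu=\nu-\delta_z$ (combinatorial factors bounded by a constant times $\langle 1,\nu\rangle^{m-1}$). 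One then estimates $\big|\sum_k p^\theta_{k,\zeta^{-1}z}\Psi_z(\zeta k)-\Psi_z(\theta z)\big|$ by splitting the $k$-range into the bulk $|\zeta k-\theta z|\le\tfrac12(\theta\wedge(1-\theta))z$ and its complement. On the bulk a second-order Taylor expansion of $\Psi_z$ around $\theta z$ has vanishing first-order term (because $\E[\zeta k]=\theta z$) and leaves a remainder $\le\tfrac12\mathrm{Var}(\zeta k)\sup_{\mathrm{bulk}}|\Psi_z''|=\tfrac{\zeta}{2}z\,\theta(1-\theta)\sup_{\mathrm{bulk}}|\Psi_z''|$, where on the bulk every argument of $f$ is a fixed fraction of $z$, so $z\sup_{\mathrm{bulk}}|\Psi_z''|$ is bounded by a constant times the combinatorial factor by the $\CL$-weights (and, for the mixed second partials, by the joint smoothness built into $\CL$, which in particular is transparent for the duality class $\KK$ of exponentials). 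On the complement one uses $|\Psi_z(\zeta k)-\Psi_z(\theta z)|\le|\zeta k-\theta z|\,\|\Psi_z'\|_\infty\le z\,\|\Psi_z'\|_\infty$ together with the Bernstein bound $\sum_{k:\,|\zeta k-\theta z|>\delta}p^\theta_{k,\zeta^{-1}z}\le 2\exp(-c\delta^2/(\zeta z))$, so that the resulting $z\,e^{-cz/\zeta}\le \zeta/(ce)$ is again $O(\zeta)$ times a combinatorial factor. Multiplying by $r$, integrating against $\nu(\mathrm dz)$ (at most $\langle 1,\nu\rangle$ cells), and absorbing the combinatorial factor into $g(1+\langle 1,\nu\rangle)(1+\langle 1,\nu\rangle)^m\le M_m$ gives $\sup_{\nu}\big|\Omega^{(r,\theta),\zeta}_{\mbox{\tiny cell split}}F(\nu)-\Omega^{(r,\theta)}_{\mbox{\tiny cell split}}F(\nu)\big|\le C(r,\theta,m,M_m,L)\,\zeta\to 0$. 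Adding the two estimates proves \eqref{e:042}.

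\textbf{Main obstacle.} The delicate part is the cell-splitting estimate and, within it, the uniformity over the unbounded configuration space ${\mathcal N}_f(\zeta\N_0)$: a priori a measure spread over a huge number of almost empty cells makes the sum over cells explode, and a priori the derivatives of $f$ degenerate at large mass. Both difficulties are absorbed precisely by the defining constraints of $\CL_{m,M_m,L}$ --- the decay imposed on admissible $g$ compensates the number of cells, the weights $z_i\vee z_i^2$ compensate the behaviour of the derivatives of $f$ on the bulk, and the exponential tail of the binomial "eats" the stray factor $z$ on the complement, the split point being tuned so that the two regimes overlap. By contrast, the virus-branching estimate is a routine (if somewhat lengthy) Taylor computation in which the lattice spacing $\zeta$ and the $\zeta^{-1}$ speed-up of the branching rates are exactly matched by the weights in $\CL$.
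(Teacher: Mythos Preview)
Your treatment of the virus-branching part follows the paper's argument: expand the factorial measure (the paper formalises this as Lemma~\ref{MR1}), Taylor-expand in the displaced coordinate to third order, and absorb the remainders with the weights in $\CL$ and with $g(x)x^m\le M_m$. So there is nothing to add there.

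For the cell-splitting part you take a genuinely different route. The paper uses only a \emph{first-order} estimate: it writes, for a one-variable section $\tilde f$ of $f$,
\[
\sum_{k}p^\theta_{k,\zeta^{-1}z_0}\big|\tilde f(\zeta k)-\tilde f(\theta z_0)\big|
\le L_{\tilde f}\,\zeta\,\E\big[|k-\theta\zeta^{-1}z_0|\big]
\le L_{\tilde f}\,\zeta^{1/2}\sqrt{z_0\theta(1-\theta)},
\]
and the same for the two-variable section $y\mapsto f(\dots,y,\dots,z_0-y,\dots)$, whose Lipschitz constant involves only the \emph{pure} first partials $\partial_{z_i}f$ and $\partial_{z_j}f$. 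This yields rate $\zeta^{1/2}$, not $\zeta$, but it never touches mixed derivatives of $f$. Your bulk/Bernstein argument is more refined and produces the sharper rate $\zeta$, but it hinges on bounding $\sup_{\mathrm{bulk}}|\Psi_z''|$, and in the two-daughter piece of $\Psi_z$ one has
\[
\partial_y^2\,f(\dots,y,\dots,z-y,\dots)=\partial^2_{z_i}f-2\,\partial_{z_i}\partial_{z_j}f+\partial^2_{z_j}f .
\]
The definition of $\CL_{m,M_m,L}$ only controls the pure partials $(z_i\vee z_i^2)\,|\partial_{z_i}^k f|$; it says nothing about $\partial_{z_i}\partial_{z_j}f$. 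Your parenthetical ``joint smoothness built into $\CL$'' is therefore not justified by the class as written: the bound you need is available for the duality class $\mathcal K$ (where $f$ factorises and mixed partials reduce to products of pure ones), but not for a general $F\in\CL$. There is also a small slip in the bulk step: the first-order term $\sum_k p^\theta_k(\zeta k-\theta z)$ vanishes only when the sum runs over \emph{all} $k$; restricted to the bulk you retain a contribution of size $|\Psi_z'(\theta z)|\cdot\sum_{\mathrm{tail}}p^\theta_k|\zeta k-\theta z|$, which must be thrown back into the Bernstein tail estimate.

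In short: your second-order scheme buys a cleaner $O(\zeta)$ rate and makes the uniformity in $z$ transparent via $ze^{-cz/\zeta}\le \zeta/(ce)$, but as stated it is complete only on $\mathcal K$, not on all of $\CL$; the paper's cruder $O(\zeta^{1/2})$ estimate has the virtue of using only first derivatives and hence sidestepping the mixed-partial issue entirely.
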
\smallskip

To prepare the proof of  Proposition~\ref{P:generators} we first show the following lemma.
\begin{lemma}[preparatory calculations] Fix $m\in\mathbb{N}$ and $\nu\in{\mathcal N}_f(\mathbb{R}_+)$.
\begin{itemize}
\item[(i)] For $\nu$-almost all $z_0\in\mathbb{R}_+$  the following holds:
\begin{equation}
\begin{aligned}
   &
   \nu^{\otimes m,\downarrow} (\mathrm{d}(z_1,...,z_m))
   -\big(\nu-\delta_{z_0}\big)^{\otimes m,\downarrow} (\mathrm{d}(z_1,...,z_m))
   \\
   &\;
   =\sum_{i=1}^{m} 
   \big(\nu-\delta_{z_0}\big)^{\otimes (m-1),\downarrow} (\mathrm{d}(z_1,...,z_{i-1},z_{i+1},...,z_m))\otimes\delta_{z_0}(z_i).
\end{aligned}
\end{equation}
\item[(ii)] For all $z_0\in\mathbb{R}_+$  the following holds:
\begin{equation}
\begin{aligned}
   &
   \big(\nu+\delta_{z_0}\big)^{\otimes m,\downarrow} (\mathrm{d}(z_1,...,z_m))-\nu^{\otimes m,\downarrow} (\mathrm{d}(z_1,...,z_m))
   \\
   &=
   \sum_{i=1}^{m} 
   \nu^{\otimes (m-1),\downarrow} (\mathrm{d}(z_1,...,z_{i-1},z_{i+1},...,z_m))\otimes\delta_{z_0}(z_i).
\end{aligned}
\end{equation}
\item[(iii)] For  all $(w,z)\in\mathbb{R}^2_+$ the following holds:\label{MR1}
\begin{equation}
    \begin{aligned}
    \label{e:045}
    &\Big(\big(\nu+\delta_{w}+\delta_{z}\big)^{\otimes m, \downarrow}-\nu^{\otimes m, \downarrow}\Big)(\mathrm{d}(z_1,...,z_m))
    \\
    &=\sum_{i=1}^{m} 
    \nu^{\otimes (m-1),\downarrow}(\mathrm{d}(z_1,...,z_{i-1},z_{i+1},...,z_m))\otimes\delta_{w}(\mathrm{d}z_i)
    \\
    &\;
    +\sum_{i=1}^{m} 
    \nu^{\otimes (m-1),\downarrow}(\mathrm{d}(z_1,...,z_{i-1},z_{i+1},...,z_m))\otimes\delta_{z}(\mathrm{d}z_i)
    \\
    &\;
    +\sum_{i\neq j \in \{1,...,m\}} 
    \nu^{\otimes (m-2),\downarrow}(\mathrm{d}(z_1,...,z_{i\wedge j-1},z_{i\wedge j+1},...,z_{i\vee j-1},z_{i\vee j+1},...,z_m))\otimes
    \\
    &\;
    \hspace{10cm}\otimes\delta_{w}(\mathrm{d}z_i)\otimes\delta_{z}(\mathrm{d}z_j).
\end{aligned}
\end{equation}
\item[(iv)] For all $x\in\mathrm{supp}(\nu)$ and $w,z\in\mathbb{R}_+$,
\begin{equation}
\begin{aligned}
\label{Eq:nu3delta}
 &\Big(\big(\nu-\delta_{x}+\delta_{w}+\delta_z\big)^{\otimes m, \downarrow}-\nu^{\otimes m, \downarrow}\Big)(\mathrm{d}(z_1,...,z_m))
    \\
    &=
    \sum_{i=1}^{m}\big(\nu-\delta_{x}\big)^{\otimes (m-1), \downarrow}(\mathrm{d}(z_1,...,z_{i-1},z_{i+1},...,z_m))\otimes\big(-\delta_x+\delta_{w}+\delta_z\big)(\mathrm{d}z_i)
   \\
    &\;+
    \sum_{i\neq j \in \{1,...,m\}} \big(\nu-\delta_{x}\big)^{\otimes (m-2),\downarrow}(\mathrm{d}(z_1,...,z_{i\wedge j-1},z_{i\wedge j+1},...,z_{i\vee j-1},z_{i\vee j+1},...,z_m))\otimes
    \\
    &\hspace{8cm}\otimes\delta_{w}(\mathrm{d}z_i)\otimes\delta_{z}(\mathrm{d}z_j).
    \end{aligned}
    \end{equation}
\end{itemize}
\end{lemma}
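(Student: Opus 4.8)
The plan is to establish part (ii) first and then obtain (i), (iii) and (iv) from it by purely algebraic manipulations. The key starting observation is that, since $\nu\in{\mathcal N}_f(\R_+)$ is a \emph{finite point measure}, every measure occurring in the four identities is a finite signed measure carried by a finite subset of a Euclidean space, so it suffices to compare the integrals of both sides against an arbitrary bounded measurable test function. Writing $\nu=\sum_{i=1}^n\delta_{a_i}$, one expands the product in \eqref{ASAW-b:018} to get, for bounded measurable $f\colon\R_+^m\to\R$,
\[
   \int_{\R_+^m}f(\underline z)\,\nu^{\otimes m,\downarrow}(\mathrm d\underline z)
   =\sum_{\substack{(i_1,\dots,i_m)\in\{1,\dots,n\}^m\\\text{pairwise distinct}}}f(a_{i_1},\dots,a_{i_m}),
\]
(the sum being empty, and the measure $0$, when $m>n$), together with the equivalent recursive identity $\nu^{\otimes m,\downarrow}=\nu^{\otimes(m-1),\downarrow}\otimes\big(\nu-\sum_{j=1}^{m-1}\delta_{z_j}\big)$ in the last coordinate.

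For (ii) I would set $\nu'=\nu+\delta_{z_0}=\sum_{i=1}^{n+1}\delta_{a_i}$ with $a_{n+1}:=z_0$ and split the sum over pairwise distinct $m$-tuples in $\{1,\dots,n+1\}$ according to whether the index $n+1$ appears: the tuples avoiding it reproduce $\nu^{\otimes m,\downarrow}$, while a tuple using $n+1$ uses it in exactly one position $p\in\{1,\dots,m\}$, the remaining $m-1$ entries forming a pairwise distinct tuple in $\{1,\dots,n\}$; summing over $p$ and translating back to measures yields (ii). Part (i) is then immediate, because for $\nu$-almost every $z_0$ we have $z_0\in\supp\nu$ and $\nu=(\nu-\delta_{z_0})+\delta_{z_0}$, so (i) is just (ii) applied to $\nu-\delta_{z_0}$.

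For (iii) I would add the two atoms one at a time: write $(\nu+\delta_w+\delta_z)^{\otimes m,\downarrow}-\nu^{\otimes m,\downarrow}$ as $\big[(\nu+\delta_w+\delta_z)^{\otimes m,\downarrow}-(\nu+\delta_w)^{\otimes m,\downarrow}\big]+\big[(\nu+\delta_w)^{\otimes m,\downarrow}-\nu^{\otimes m,\downarrow}\big]$, apply (ii) to each bracket, and then expand each surviving $(\nu+\delta_w)^{\otimes(m-1),\downarrow}$ by (ii) once more (now with $m-1$ coordinates). Collecting terms produces the two single-coordinate sums carrying $\delta_w$ respectively $\delta_z$, together with a double sum carrying $\delta_w\otimes\delta_z$ over ordered pairs of distinct coordinates, which after the relabelling $(i,j)\mapsto(j,i)$ is exactly \eqref{e:045}. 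Finally, for (iv) I would use $\nu=(\nu-\delta_x)+\delta_x$ for $x\in\supp\nu$ and write the left-hand side of \eqref{Eq:nu3delta} as $\big[((\nu-\delta_x)+\delta_w+\delta_z)^{\otimes m,\downarrow}-(\nu-\delta_x)^{\otimes m,\downarrow}\big]-\big[\nu^{\otimes m,\downarrow}-(\nu-\delta_x)^{\otimes m,\downarrow}\big]$; applying (iii) to the first bracket (with $\nu$ replaced by $\nu-\delta_x$) and (i) to the second, the two single-coordinate terms of (iii) and the single-coordinate term of (i) merge into the combination $-\delta_x+\delta_w+\delta_z$, while the cross term of (iii) is left unchanged, giving \eqref{Eq:nu3delta}.

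There is no analytic content here, and the only place that demands care is the combinatorial bookkeeping: handling the deleted-coordinate notation $\mathrm d(z_1,\dots,z_{i-1},z_{i+1},\dots,z_m)$ consistently, and, in (iii) and (iv), checking that in the double sums each pair of coordinates carrying $\delta_w$ and $\delta_z$ is counted exactly once with the right orientation, so that nothing is lost or double counted. An alternative would be to prove (ii) by induction on $m$ directly from the recursive form of $\nu^{\otimes m,\downarrow}$, avoiding an explicit enumeration of the atoms of $\nu$ at the cost of a slightly heavier inductive step.
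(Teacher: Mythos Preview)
Your proposal is correct and follows essentially the same route as the paper: prove one of the single-atom identities, deduce the other by substituting $\nu\pm\delta_{z_0}$ for $\nu$, obtain (iii) by adding the two atoms one at a time, and obtain (iv) from the telescoping decomposition through $(\nu-\delta_x)^{\otimes m,\downarrow}$. The only cosmetic difference is that the paper starts from (i) and proves it by the induction on $m$ you mention as an alternative, whereas you start from (ii) via the explicit tuple enumeration; the remaining reductions are identical.
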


\begin{proof} {\em (i) }The statement is intuitively clear as picking $m$ cells without repetition from the whole cell population can be decomposed in picking $m$ cell without repetition from all but a specific cell and in picking $(m-1)$ without repetition from all but the specific one and then adding the specific one to the sample.  To prove it formally,
we proceed by induction over $m\in\mathbb{N}$. Notice that the statement is trivial for $m=0$ and $m=1$.
If $m=2$, then
\begin{equation}
\begin{aligned}
&
\big(\nu-\delta_{z_0}+\delta_{z_0}\big)^{\otimes 2,\downarrow}(\mathrm{d}(z_1,z_2))
\\
&=
\big(\nu-\delta_{z_0}\big)^{\otimes 2,\downarrow}(\mathrm{d}(z_1,z_2))
+
\big(\nu-\delta_{z_0}\big)(\mathrm{d}(z_1))\otimes\delta_{z_0}(z_2)
\\
&\;
+
\delta_{z_0}(z_1)\otimes\big(\nu-\delta_{z_0}-\delta_{z_1}\big)(\mathrm{d}z_2)
+ 
\delta_{z_0}\otimes\delta_{z_0}(\mathrm{d}(z_1,z_2))
\\
&=
\big(\nu-\delta_{z_0}\big)^{\otimes 2,\downarrow}(\mathrm{d}(z_1,z_2))
+
\big(\nu-\delta_{z_0}\big)(\mathrm{d}(z_1))\otimes\delta_{z_0}(z_2)
+
\delta_{z_0}(z_1)\otimes\big(\nu-\delta_{z_0}\big)(\mathrm{d}z_2),
\end{aligned}
\end{equation}
which is the claim for $m=2$. Suppose next that the result is true for $m_0-1$ for some $m_0\in\mathbb{N}$. Then by the induction hypothesis,
\begin{equation}
\begin{aligned}
\label{e:044}
  &
  \nu^{\otimes m_0,\downarrow}(\mathrm{d}(z_1,...,z_{m_0}))
  \\
  &= 
  \nu^{\otimes (m_0-1),\downarrow}(\mathrm{d}(z_1,...,z_{m_0-1}))\otimes\big(\nu-\sum_{i=1}^{m_0-1}\delta_{z_i}\big)(\mathrm{d}z_{m_0})
  \\
  &=
  \big(\nu-\delta_{z_0}\big)^{\otimes (m_0-1),\downarrow}(\mathrm{d}(z_1,...,z_{m_0-1}))\otimes\big(\nu-\sum_{i=1}^{m_0-1}\delta_{z_i}\big)(\mathrm{d}z_{m_0})
  \\
  &\;
  +\sum_{i=1}^{m_0-1}
  \big(\nu-\delta_{z_0}\big)^{\otimes (m_0-2),\downarrow}(\mathrm{d}(z_1,...,z_{i-1},z_{i+1},...,z_{m_0-1}))\otimes\delta_{z_0}(z_i)
  \otimes\big(\nu-\sum_{i=1}^{m_0-1}\delta_{z_i}\big)(\mathrm{d}z_{m_0})
  \\
  &=
  \big(\nu-\delta_{z_0}\big)^{\otimes m_0),\downarrow}(\mathrm{d}(z_1,...,z_{m_0}))
  +
  \nu^{\otimes (m_0-1),\downarrow}(\mathrm{d}(z_1,...,z_{m_0-1}))\otimes\delta_{z_0}(\mathrm{d}z_{m_0})
  \\
  &\;
  +
  \big(\nu-\delta_{z_0}\big)^{\otimes (m_0-2),\downarrow}(\mathrm{d}(z_1,...,z_{i-1},z_{i+1},...,z_{m_0-1}))\otimes\delta_{z_0}(z_i)
  \otimes\big(\nu-\sum_{i=1}^{m_0-1}\delta_{z_i}\big)(\mathrm{d}z_{m_0})
  \\
&=
\big(\nu-\delta_{z_0}\big)^{\otimes m_0),\downarrow}(\mathrm{d}(z_1,...,z_{m_0}))
  +\sum_{i=1}^{m_0}
    \big(\nu-\delta_{z_0}\big)^{\otimes (m_0-1),\downarrow}(\mathrm{d}(z_1,...,z_{i-1},z_{i+1},...,z_{m_0-1}))\otimes\delta_{z_0}(z_i).
\end{aligned}
\end{equation}
This completes the proof of our proposition.\smallskip

\noindent  {\em (ii) } follows by an analogous line of argument as in (i).\smallskip

\noindent  {\em (iii) } follows by iterative application of (i).\smallskip

\noindent {\em (iv) }Notice that
\begin{equation}
    \begin{aligned}
    \label{e:049}
    &\Big(\big(\nu-\delta_{x}+\delta_{w}+\delta_z\big)^{\otimes m, \downarrow}-\nu^{\otimes m, \downarrow}\Big)(\mathrm{d}(z_1,...,z_m))
    \\
    &=
    \Big(\big(\nu-\delta_{x}+\delta_{w}+\delta_z\big)^{\otimes m, \downarrow}-\big(\nu-\delta_x\big)^{\otimes m, \downarrow}\Big)(\mathrm{d}(z_1,...,z_m))
     \\
    &\;
    +\Big(\big(\nu-\delta_{x}\big)^{\otimes m, \downarrow}-\nu^{\otimes m, \downarrow}\Big)(\mathrm{d}(z_1,...,z_m))
\end{aligned}
\end{equation}

From Lemma~\ref{MR1} we therefore know that for all $m\in\mathbb{N}$, $w,z\in \R_+$ and $x\in \supp (\nu)$,
\begin{equation}
    \begin{aligned}
    \label{e:047}
    &\Big(\big(\nu-\delta_{x}+\delta_{w}+\delta_z\big)^{\otimes m, \downarrow}-\nu^{\otimes m, \downarrow}\Big)(\mathrm{d}(z_1,...,z_m))
    \\
    &=
    \sum_{i=1}^{m}\big(\nu-\delta_{x}\big)^{\otimes (m-1), \downarrow}(\mathrm{d}(z_1,...,z_{i-1},z_{i+1},...,z_m))\otimes\delta_{w}(\mathrm{d}z_i)
    \\
    &\;
    +\sum_{i=1}^{m}\big(\nu-\delta_{x}\big)^{\otimes (m-1), \downarrow}(\mathrm{d}(z_1,...,z_{i-1},z_{i+1},...,z_m))\otimes\delta_{z}(\mathrm{d}z_i)
    \\
    &\;+
    \sum_{i\neq j \in \{1,...,m\}} \big(\nu-\delta_{x}\big)^{\otimes (m-2),\downarrow}(\mathrm{d}(z_1,...,z_{i\wedge j-1},z_{i\wedge j+1},...,z_{i\vee j-1},z_{i\vee j+1},...,z_m))\otimes
    \\
    &\hspace{8cm}\otimes\delta_{w}(\mathrm{d}z_i)\otimes\delta_{z}(\mathrm{d}z_j)
    \\
    &\;
    -\sum_{i=1}^{m} \big(\nu-\delta_{x}\big)^{\otimes (m-1),\downarrow}(\mathrm{d}(z_1,...,z_{i-1},z_{i+1},...,z_m))\otimes\delta_{x}(\mathrm{d}z_i)
    \end{aligned}
    \end{equation}
which gives the claim.
\end{proof}

\begin{proof}[Proof of Proposition~\ref{P:generators}] Fix $F=F^{g,m,f}\in\CL$ and $\nu\in{\mathcal N}_f(\mathbb{R}_+)$.
Then there exists $\{M_m;\,m\in\mathbb{N}\}\subseteq\mathbb{N}$ and $L\in\mathbb{N}$ with $F^{g,m,f}\in\CL_{\{M_m;\,m\in\mathbb{N}\},L}$.
We proceed in two steps: \smallskip

\noindent{\em Step~1 (Cell split). } Recall $\Omega^{(r,\theta),\zeta}_{\mbox{\tiny cell split}}$ from (\ref{Eq:PartGener2}), and write
\begin{equation}
\begin{aligned}
    \label{Eq:PartGener2a}
    &\Omega^{(r,\theta),\zeta}_{\mbox{\tiny cell split}} F^{g,m,f}(\nu)
    \\
    &=
    A^{r}_{\mbox{\tiny cell number flow}} F^{g,m,f}(\nu)+A^{(r,\theta),\zeta}_{\mbox{\tiny virus mass split}} F^{g,m,f}(\nu)
    \\
    &=:
    rg\big(\langle 1,\nu\rangle+1\big)\int_{\mathbb{R}^m_+}f\big((z_1,...,z_m)\big)\,\nu^{\otimes m,\downarrow}(\underline{z})
    \\
    &\;
    +r \big(\langle 1,\nu\rangle+1\big)\int_{\mathbb{R}^m_+}f\big((z_1,...,z_m)\big)\,\sum_{k=0}^{\zeta^{-1}z}p^\theta_{k,\zeta^{-1}z}\Big(\big(\nu+\delta_{\zeta k}+\delta_{z_0-\zeta k}-\delta_{z_0}\big)^{\otimes m,\downarrow}-\nu^{\otimes m,\downarrow}\Big)(\mathrm{d}\underline{z})\,\nu(\mathrm{d}z_0).
    \end{aligned}
    \end{equation}\smallskip

    \noindent{\em Step~1.1 (The term $A^r_{\mbox{\tiny cell number flow}} F^{g,m,f}(\nu)$) } This term is constant in $\zeta\in(0,1]$. It therefore trivially follows that for all $F\in{\mathcal L}$,
\begin{equation}
\label{e:0421.1}
   \lim_{\zeta\to 0}\sup_{\nu\in{\mathcal N}_f(\zeta\mathbb{N}_f)}\big|A^r_{\mbox{\tiny cell number flow}}F(\nu)-A^r_{\mbox{\tiny cell number flow}}F(\nu)\big|=0.
\end{equation}\smallskip

\noindent{\em Step~1.2 (The term $A^{(r,\theta),\zeta}_{\mbox{\tiny virus mass split}} F^{g,m,f}(\nu)$) } For all $\nu\in{\mathcal N}_f(\mathbb{R}_+)$,
\begin{equation}
\begin{aligned}
\label{e:043}
   &A^{(r,\theta),\zeta}_{\mbox{\tiny virus mass split}} F^{g,m,f}(\nu)
   \\
   &=
   r\cdot g\big(\langle 1,\nu\rangle+\zeta\big)\int_{\mathbb{R}_+}\int_{\mathbb{R}^m_+}f\big((z_1,...,z_m)\big)\,
   \\
   &\hspace{2cm}
   \sum_{k=0}^{\zeta^{-1}z_0}p^\theta_{k,\zeta^{-1}z_0}
   \Big(\big(\nu-\delta_{z_0}+\delta_{\zeta k}+\delta_{z_0-\zeta k}\big)^{\otimes m,\downarrow}-\nu^{\otimes m,\downarrow}\Big)(\mathrm{d}(z_1,...,z_m))\,\nu(z_0)
\end{aligned}
\end{equation}

Recall from  Lemma~\ref{MR1}(iv) that given $z_0\in\mathrm{supp}(\nu)$,
\begin{equation}
\begin{aligned}
\label{e:048}
   &\sum_{k=0}^{\zeta^{-1}z_0}p^\theta_{k,\zeta^{-1}z_0}
   \Big(\big(\nu-\delta_{z_0}+\delta_{\zeta k}+\delta_{z_0-\zeta k}\big)^{\otimes m,\downarrow}-\nu^{\otimes m,\downarrow}\Big)(\mathrm{d}(z_1,...,z_m))
   \\
   &=
   \sum_{i=1}^m\big(\nu-\delta_{z_0}\big)^{\otimes (m-1),\downarrow}(\mathrm{d}(z_1,...,z_{i-1},z_{i+1},...z_m))
   \otimes\sum_{k=0}^{\zeta^{-1}z_0}p^\theta_{k,\zeta^{-1}z_0}\big(-\delta_{z_0}+\delta_{\zeta k}+\delta_{z_0-\zeta k}\big)(\mathrm{d}z_i)
   \\
   &\;
   +\sum_{i\neq j \in \{1,...,m\}} \big(\nu-\delta_{x}\big)^{\otimes (m-2),\downarrow}(\mathrm{d}(z_1,...,z_{i\wedge j-1},z_{i\wedge j+1},...,z_{i\vee j-1},z_{i\vee j+1},...,z_m))\otimes
    \\
    &\hspace{8cm}\otimes\sum_{k=0}^{\zeta^{-1}z_0}p^\theta_{k,\zeta^{-1}z_0}\delta_{\zeta k}\otimes\delta_{z_0-\zeta k}(\mathrm{d}(z_i,z_j))
\end{aligned}
\end{equation}

By the law of large numbers applied to a binomially distributed random variable, for all Lipschitz functions $f\in{\mathcal C}_b(\mathbb{R}_+)$ with Lipshitz constant $L$ and all $z_0\in\mathbb{R}_+$,
\begin{equation}
\begin{aligned}
\label{e:053}
   \sum_{k=0}^{\zeta^{-1}z_0}p^\theta_{k,\zeta^{-1}z_0}\big|f\big(\zeta k\big)-f\big(\theta z_0\big)\big|
   &\le
   L\sum_{k=0}^{\zeta^{-1}z_0}p^\theta_{k,\zeta^{-1}z_0}\big|\zeta k-\theta z_0\big|
   \\
   &\le
   L\zeta\sqrt{\sum_{k=0}^{\zeta^{-1}z_0}p^\theta_{k,\zeta^{-1}z_0}\big|k-\theta \zeta^{-1}z_0\big|^2}
   \\
   &\le
   L\zeta^{\frac{1}{2}}\sqrt{z_0\theta(1-\theta)}.
\end{aligned}
\end{equation}

Analogously, we can argue that
 \begin{equation}
\begin{aligned}
\label{e:054}
   \sum_{k=0}^{\zeta^{-1}z_0}p^\theta_{k,\zeta^{-1}z_0}\big|f\big(z_0-\zeta k\big)-f\big((1-\theta) z_0\big)\big|
   &\le
   L\zeta^{\frac{1}{2}}\sqrt{z_0\theta(1-\theta)}.
   \end{aligned}
   \end{equation}

   and for all functions $\tilde{f}\in{\mathcal C}_b(\mathbb{R}^2_+)$ of the form $\tilde{f}(x,z_0-x)=f(x)$ for some  Lipshitz function  $f$ with Lipshitz constant $L$ and all $z_0\in\mathbb{R}_+$,
    \begin{equation}
\begin{aligned}
\label{e:055}
   \sum_{k=0}^{\zeta^{-1}z_0}p^\theta_{k,\zeta^{-1}z_0}\big|\tilde{f}\big(\zeta k,z_0-\zeta k\big)-\tilde{f}\big((1-\theta) z_0\big)\big|
   &\le
   L\zeta^{\frac{1}{2}}\sqrt{z_0\theta(1-\theta)}.
   \end{aligned}
   \end{equation}

We therefore find that for all
with $\int z\,\nu(\mathrm{d}z)<\infty$ that
\begin{equation}
\label{e:0421.2}
\begin{aligned}
   &\big|A^{(r,\theta),\zeta}_{\mbox{\tiny virus branching}}F^{g,m,f}(\nu)-A^{(r,\theta)}_{\mbox{\tiny virus branching}}F^{g,m,f}(\nu)\big|
   \\
   &
   \hspace{2cm}\le L r(m^2+1)\zeta^{\frac{1}{2}}\sqrt{\theta(1-\theta)}\big(\sup_{k\in\mathbb{N}}g(k)k^m\big)\int (\sqrt{z_0}\vee 1)\,\nu(\mathrm{d}z_0)
   \\
   &\hspace{2cm}\le L r(m^2+1)\zeta^{\frac{1}{2}}\sqrt{\theta(1-\theta)}\Big(\int z_0\,\nu(\mathrm{d}z_0)\vee M_m\Big).
\end{aligned}
\end{equation}\smallskip

\noindent{\em Step~2 (Virus branching). } Recall $\Omega^{(\sigma,K,\lambda),\zeta}_{\mbox{\tiny virus branching}}$ from (\ref{Eq:PartGener3}). Then
\begin{equation}
\label{e:063}
\begin{aligned}
   \Omega^{(\sigma,K,\lambda),\zeta}_{\mbox{\tiny virus branching}}F^{g,m,f}(\nu)
   =g\big(\langle 1,\nu\rangle\big)\Omega^{(\sigma,K,\lambda),\zeta}_{\mbox{\tiny virus branching}}F^{\1,m,f}(\nu),
\end{aligned}
\end{equation}
while
\begin{equation}
\label{e:056}
\begin{aligned}
   &\Omega^{(\sigma,K,\lambda),\zeta}_{\mbox{\tiny virus branching}}F^{\1,m,f}(\nu)
   \\
   &=
   \big(\zeta^{-1}\sigma+K\big)\zeta^{-1}\int_{\mathbb{R}_+}   z_0\int_{\mathbb{R}^m_+}f\big((z_1,...,z_m)\big)\,\Big(\big(\nu-\delta_{z_0}+ \delta_{z_0+\zeta}\big)^{\otimes m,\downarrow}-\nu^{\otimes m,\downarrow}\nu\Big)(\mathrm{d}(z_1,...,z_m))\, \nu(\mathrm{d}z_0)
    \\
    &\,+ \zeta^{-1} \int_{\mathbb{R}_+} \big(\zeta^{-1}\sigma +\lambda (z_0-\zeta)\big)z_0 \int_{\mathbb{R}^m_+}f\big((z_1,...,z_m)\big)\,\Big(\big(\nu-\delta_{z_0}+ \delta_{z_0-\zeta}\big)^{\otimes m,\downarrow}-\nu^{\otimes m,\downarrow}\Big)(\mathrm{d}(z_1,...,z_m))\, \nu(\mathrm{d}z_0)
    \\
    &=
    \zeta^{-2}\sigma \int_{\mathbb{R}_+}   z_0\int_{\mathbb{R}^m_+}f\big((z_1,...,z_m)\big)\,\Big(\big(\nu-\delta_{z_0}+ \delta_{z_0+\zeta}\big)^{\otimes m,\downarrow}+\big(\nu-\delta_{z_0}+ \delta_{z_0-\zeta}\big)^{\otimes m,\downarrow}-2\nu^{\otimes m,\downarrow}\Big)(\mathrm{d}(z_1,...,z_m))\, \nu(\mathrm{d}z_0)
    \\
    &\;
    +\zeta^{-1}K\int_{\mathbb{R}_+}   z_0\int_{\mathbb{R}^m_+}f\big((z_1,...,z_m)\big)\,\Big(\big(\nu-\delta_{z_0}+ \delta_{z_0+\zeta}\big)^{\otimes m,\downarrow}-\nu^{\otimes m,\downarrow}\Big)(\mathrm{d}(z_1,...,z_m))\, \nu(\mathrm{d}z_0)
    \\
    &\;
    +\zeta^{-1}\lambda \int_{\mathbb{R}_+}  (z_0-\zeta) z_0 \int_{\mathbb{R}^m_+}f\big((z_1,...,z_m)\big)\,\Big(\big(\nu-\delta_{z_0}+ \delta_{z_0-\zeta}\big)^{\otimes m,\downarrow}-\nu^{\otimes m,\downarrow}\Big)(\mathrm{d}(z_1,...,z_m))\, \nu(\mathrm{d}z_0)
    \\
    &=:
    \Omega^{\sigma,\zeta}_{\mbox{\tiny natural branching}}F^{\1,m,f}(\nu)+\Omega^{K,\zeta}_{\mbox{\tiny extra birth}}F^{\1,m,f}(\nu)+\Omega^{\lambda,\zeta}_{\mbox{\tiny competition}}F^{\1,m,f}(\nu).
\end{aligned}
\end{equation}

We will once more treat the three terms separately. \smallskip

\noindent{Step~2.1 (Natural branching)} Use that by Lemma~\ref{MR1}(i), for all $\nu\in{\mathcal N}_f(\mathbb{R}_+)$ and $z_0\in\supp(\nu)$,
\begin{equation}
\label{e:057}
\begin{aligned}
   &\Big(\big(\nu-\delta_{z_0}+\delta_{z_0\pm\zeta}\big)^{\otimes m,\downarrow}-\big(\nu-\delta_{z_0}\big)^{\otimes m,\downarrow}\Big)(\mathrm{d}(z_1,...,z_m))
   \\
   &=
   \sum_{i=1}^m\big(\nu-\delta_{z_0}\big)^{\otimes (m-1),\downarrow}(\mathrm{d}(z_1,...,z_{i-1},z_{i+1},...,z_m))\otimes\delta_{z_0\pm\zeta}(\mathrm{d}z_i),
\end{aligned}
\end{equation}
together with
\begin{equation}
\label{e:058}
\begin{aligned}
   &\Big(\big(\nu-\delta_{z_0}\big)^{\otimes m,\downarrow}-\nu^{\otimes m,\downarrow}\Big)(\mathrm{d}(z_1,...,z_m))
   \\
   &=
   -\sum_{i=1}^m\big(\nu-\delta_{z_0}\big)^{\otimes (m-1),\downarrow}(\mathrm{d}(z_1,...,z_{i-1},z_{i+1},...,z_m))\otimes\delta_{z_0}(\mathrm{d}z_i),
\end{aligned}
\end{equation}
to conclude that
\begin{equation}
\label{e:059}
\begin{aligned}
  &\Big(\big(\nu-\delta_{z_0}+\delta_{z_0+\zeta}\big)^{\otimes m,\downarrow}+\big(\nu-\delta_{z_0}+\delta_{z_0-\zeta}\big)^{\otimes m,\downarrow}-2\nu^{\otimes m,\downarrow}\Big)(\mathrm{d}(z_1,...,z_m))
  \\
  &=
  \sum_{i=1}^m\big(\nu-\delta_{z_0}\big)^{\otimes (m-1),\downarrow}(\mathrm{d}(z_1,...,z_{i-1},z_{i+1},...,z_m))\otimes\Big(\delta_{z_0+\zeta}+\delta_{z_0-\zeta}-2\delta_{z_0}\Big)(\mathrm{d}z_i)
\end{aligned}
\end{equation}
and
\begin{equation}
\label{e:051}
\begin{aligned}
&\int_{\mathbb{R}_+}z_0\int_{\mathbb{R}^m_+}f\big((z_1,...,z_m)\big)\Big(\big(\nu-\delta_{z_0}+\delta_{z_0+\zeta}\big)^{\otimes m,\downarrow}+\big(\nu-\delta_{z_0}+\delta_{z_0-\zeta}\big)^{\otimes m,\downarrow}-2\nu^{\otimes m,\downarrow}\Big)(\mathrm{d}(z_1,...,z_m))\nu(dz_0)
\\
&=
\int_{\mathbb{R}_+}z_0\int_{\mathbb{R}^m_+}f\big((z_1,...,z_m)\sum_{i=1}^m\big(\nu-\delta_{z_0}\big)^{\otimes (m-1),\downarrow}(\mathrm{d}(z_1,...,z_{i-1},z_{i+1},...,z_m))\otimes
\\
&\hspace{9cm}
\otimes\Big(\delta_{z_0+\zeta}+\delta_{z_0-\zeta}-2\delta_{z_0}\Big)(\mathrm{d}z_i)
\nu(dz_0)
\\
&=
\int_{\mathbb{R}^m_+}\sum_{i=1}^m\Big(\big(z_i-\zeta\big)f\big((z_1,...,z_{i-1},z_i+\zeta,z_{i+1},...,z_m)
+\big(z_i+\zeta\big)f\big((z_1,...,z_{i-1},z_i-\zeta,z_{i+1},...,z_m)\big)
\\
&\;
\hspace{3cm}-2z_if\big((z_1,...,z_m)\big)\Big)\,\nu^{\otimes m,\downarrow}(\mathrm{d}(z_1,...,z_m)).
\end{aligned}
\end{equation}

Thus for all $\nu\in{\mathcal N}_f(\zeta\mathbb{N})$, by Taylor expansion around $z_i$,
\begin{equation}
\label{e:061}
\begin{aligned}
&\Omega^{\sigma,\zeta}_{\mbox{\tiny natural branching}}F^{\1,m,f}(\nu)
\\
&=
\zeta^{-2}\sigma \int_{\mathbb{R}^m_+}\sum_{i=1}^m\Big((z_i+\zeta)f\big((z_1,...,z_{i-1},z_i+\zeta,z_{i+1},...,z_m)\big)
+(z_i-\zeta)f\big((z_1,...,z_{i-1},z_i-\zeta,z_{i+1},...,z_m)\big)-
\\
&\hspace{6cm}
-2z_i f\big((z_1,...,z_m)\big)\Big)\,\nu^{\otimes m,\downarrow}(\mathrm{d}(z_1,...,z_m))
\\
&\tzetao
\sigma\int_{\mathbb{R}^m_+}\sum_{i=1}^m z_i\frac{\partial^2}{\partial (z_i)^2}f\big((z_1,...,z_m)\big)\,\nu^{\otimes m,\downarrow}(\mathrm{d}(z_1,...,z_m))
\\
&=:
\Omega^{\sigma}_{\mbox{\tiny natural branching}}F^{\1,m,f}(\nu).
\end{aligned}
\end{equation}

Moreover, by the Taylor theorem,
\begin{equation}
\label{e:060}
\begin{aligned}
&\big|\Omega^{\sigma,\zeta}_{\mbox{\tiny natural branching}}F^{\1,m,f}(\nu)-\Omega^{\sigma}_{\mbox{\tiny natural branching}}F^{\1,m,f}(\nu)\big|
\\
&\le
\zeta\sigma\langle 1,\nu\rangle^m Lm,
\end{aligned}
\end{equation}
and therefore
\begin{equation}
\label{e:064}
\begin{aligned}
\big|\Omega^{\sigma,\zeta}_{\mbox{\tiny natural branching}}F^{g,m,f}(\nu)-\Omega^{\sigma}_{\mbox{\tiny natural branching}}F^{g,m,f}(\nu)\big|
&\le
4\zeta\cdot \sigma M_m L m.
\end{aligned}
\end{equation}
\smallskip

\noindent{Step~2.2 (Extra birth)} Use that by (\ref{e:057}) together with (\ref{e:058}), for all $\nu\in{\mathcal N}_f(\mathbb{R}_+)$ 
\begin{equation}
\label{e:050}
\begin{aligned}
&\Omega^{K,\zeta}_{\mbox{\tiny extra birth}}F^{g,m,f}(\nu)
\\
&=
g\big(\langle 1,\nu\rangle\big)\zeta^{-1}K\int_{\mathbb{R}_+}   z_0\int_{\mathbb{R}^m_+}f\big((z_1,...,z_m)\big)\,\Big(\big(\nu-\delta_{z_0}+ \delta_{z_0+\zeta}\big)^{\otimes m,\downarrow}-\nu^{\otimes m,\downarrow}\Big)(\mathrm{d}(z_1,...,z_m))\, \nu(\mathrm{d}z_0)
\\
&=
g\big(\langle 1,\nu\rangle\big)\zeta^{-1}K\int_{\mathbb{R}_+}   z_0
\int_{\mathbb{R}^m_+}\sum_{i=1}^m f\big((z_1,...,z_m)\big)\,\nu^{\otimes (m-1),\downarrow}(\mathrm{d}(z_1,...,z_{i-1},z_{i+1},...,z_m))\otimes\big(\delta_{z_0+\zeta}-\delta_{z_0}\big)(\mathrm{d}z_i)\,
\nu(\mathrm{d}z_0)
\\
&=
g\big(\langle 1,\nu\rangle\big)\zeta^{-1}K
\int_{\mathbb{R}^m_+}\sum_{i=1}^m \Big(\big(\big(z_i-\zeta\big)f\big((z_1,...,z_{i-1},z_i+\zeta,z_{i+1},...,z_m)\big)-z_i f\big((z_1,...,z_m)\big)\Big)\,\nu^{\otimes m,\downarrow}(\mathrm{d}\underline{z})
\\
&\tzetao
g\big(\langle 1,\nu\rangle\big)K\int_{\mathbb{R}^m_+}\sum_{i=1}^m z_i\frac{\partial}{\partial z_i}f\big((z_1,...,z_m)\big)\,\nu^{\otimes m,\downarrow}(\mathrm{d}\underline{z})
\\
&=:
\Omega^{K}_{\mbox{\tiny extra birth}}F^{g,m,f}(\nu).
\end{aligned}
\end{equation}

Following the same line of argument which lead to (\ref{e:064}) yields for all $\nu\in{\mathcal N}_f(\zeta\mathbb{N})$ that
\begin{equation}
\label{e:052}
\begin{aligned}
\big|\Omega^{K,\zeta}_{\mbox{\tiny extra birth}}F^{g,m,f}(\nu)-\Omega^{K}_{\mbox{\tiny extra birth}}F^{g,m,f}(\nu)\big|
&\le
2\zeta\cdot K M_m L m.
\end{aligned}
\end{equation}
\smallskip

\noindent{Step~2.3 (Competition) } By (\ref{e:057}) together with (\ref{e:058}), for all $\nu\in{\mathcal N}_f(\mathbb{R}_+)$ 
\begin{equation}
\label{e:065}
\begin{aligned}
&\Omega^{\lambda,\zeta}_{\mbox{\tiny competition} }F^{g,m,f}(\nu)
\\
&=
g\big(\langle 1,\luis{\nu}\rangle\big)\zeta^{-1}\lambda\int_{\mathbb{R}_+}z_0(z_0-\zeta)\int_{\mathbb{R}^m_+}f\big((z_1,...,z_m)\big)\,\Big(\big(\nu-\delta_{z_0}+ \delta_{z_0-\zeta}\big)^{\otimes m,\downarrow}-\nu^{\otimes m,\downarrow}\Big)(\mathrm{d}(z_1,...,z_m))\, \nu(\mathrm{d}z_0)
\\
&=
g\big(\langle 1,\nu\rangle\big)\zeta^{-1}\lambda\int_{\mathbb{R}_+}   z_0(z_0-\zeta)
\int_{\mathbb{R}^m_+}\sum_{i=1}^m f\big((z_1,...,z_m)\big)\,\nu^{\otimes (m-1),\downarrow}(\mathrm{d}(z_1,...,z_{i-1},z_{i+1},...,z_m))\otimes
\\
&\hspace{10cm}
\otimes\big(\delta_{z_0-\zeta}-\delta_{z_0}\big)(\mathrm{d}z_i)\,
\nu(\mathrm{d}z_0)
\\
&=
g\big(\langle 1,\nu\rangle\big)\zeta^{-1}\lambda
\int_{\mathbb{R}^m_+}\sum_{i=1}^m \Big(\big(\big(z_i+\zeta\big)z_i f\big((z_1,...,z_{i-1},z_i-\zeta,z_{i+1},...,z_m)\big)-z_i f\big((z_1,...,z_m)\big)\Big)\,\nu^{\otimes m,\downarrow}(\mathrm{d}\underline{z})
\\
&\tzetao
-g\big(\langle 1,\nu\rangle\big)\lambda\int_{\mathbb{R}^m_+}\sum_{i=1}^m z_i^2\frac{\partial}{\partial z_i}f\big((z_1,...,z_m)\big)\,\nu^{\otimes m,\downarrow}(\mathrm{d}\underline{z})
\\
&=:
\Omega^{\lambda}_{\mbox{\tiny competition}}F^{g,m,f}(\nu).
\end{aligned}
\end{equation}

Following the same line of argument which lead to (\ref{e:064}) yields for all $\nu\in{\mathcal N}_f(\zeta\mathbb{N})$ that
\begin{equation}
\label{e:066}
\begin{aligned}
\big|\Omega^{\lambda,\zeta}_{\mbox{\tiny competition}}F^{g,m,f}(\nu)-\Omega^{\lambda}_{\mbox{\tiny competition}}F^{g,m,f}(\nu)\big|
&\le
2\zeta\cdot K M_m L m.
\end{aligned}
\end{equation}
This finishes the proof of Proposition~\ref{P:generators}.
\end{proof}

We collect here what we have proven in the direction of establishing the well-posed martingale problem stated in Theorem~\ref{T:001}.
\begin{proposition}[existence of a solution]
Let the family $\{Z^\zeta;\,\zeta\in(0,1]\}$ be as define as in Proposition \ref{Prop:tightness}. Assume additionally that $Z$ is a limit point
of $\{Z^\zeta;\,\zeta\in(0,1]\}$. Then $Z$ is a solution of the $(\Omega_{\mbox{\tiny $2$-level}},{\mathcal D}(\Omega_{\mbox{\tiny $2$-level}}),P)$-martingale problem, where $P$ is the law of $Z_0$.
\label{Prop:Conv}
\end{proposition}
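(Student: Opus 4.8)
The plan is to extract a convergent subsequence from the tight family $\{Z^\zeta\}$ and to pass to the limit in the martingale characterization that is already available at the particle level, the two essential inputs being the uniform generator convergence of Proposition~\ref{P:generators} and the uniform moment bounds of Proposition~\ref{P:005}.

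\textbf{Step 1: the particle-level martingales.} Fix $F=F^{g,m,f}\in\mathcal L$ (in particular any exponential function from the class $\mathcal K$ of Remark~\ref{Rem:003}). The defining bounds of $\mathcal L_{m,M_m,L}$ force $F$ to be bounded on $\mathcal N_f(\mathbb R_+)$, and together with \eqref{e:021} they bound $\E\big[\int_0^t|\Omega^{\zeta}_{\mbox{\tiny $2$-level}}F(Z^\zeta_s)|\,\mathrm ds\big]$ uniformly in $\zeta$. Hence, compensating the Poisson integrals in \eqref{Eq:phi(Z)} and letting the localising times $\tau_M$ tend to infinity with the help of \eqref{e:022}, Proposition~\ref{Prop:WellDef} yields that
\[
   M^{F,\zeta}_t:=F(Z^\zeta_t)-F(Z^\zeta_0)-\int_0^t\Omega^{\zeta}_{\mbox{\tiny $2$-level}}F(Z^\zeta_s)\,\mathrm ds
\]
is a martingale for every $\zeta\in(0,1]$; equivalently, for all $0\le s_1<\dots<s_k\le s<t$ and $\Phi_1,\dots,\Phi_k\in\mathcal C_b(\mathcal N_f(\mathbb R_+))$,
\[
   \E\Big[\Big(F(Z^\zeta_t)-F(Z^\zeta_s)-\int_s^t\Omega^{\zeta}_{\mbox{\tiny $2$-level}}F(Z^\zeta_u)\,\mathrm du\Big)\prod_{i=1}^k\Phi_i(Z^\zeta_{s_i})\Big]=0 .
\]

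\textbf{Step 2: identification of the limit.} Let $Z$ be a limit point of $\{Z^\zeta\}$ along a subsequence $\zeta_n\to0$; by the Skorokhod representation theorem we may realise $Z^{\zeta_n}\to Z$ almost surely in $\mathcal D([0,\infty);\mathcal N_f(\mathbb R_+))$. Since each $Z^{\zeta_n}_t$ is an integer–valued point measure whose total mass has moments bounded uniformly in $n$ by \eqref{e:019}, the limit $Z_t$ is again a finite point measure, $Z$ takes values in $\mathcal N_f(\mathbb R_+)$, and the polynomial functionals $F$, $\Omega_{\mbox{\tiny $2$-level}}F$ (as in \eqref{y:005}--\eqref{y:004}) and the $\Phi_i$ are all continuous on $\mathcal N_f(\mathbb R_+)$ for the weak topology. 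Now fix $s_1<\dots<s_k<s<t$ outside the (at most countable) set of times at which $Z$ jumps with positive probability, so that $(Z^{\zeta_n}_{s_1},\dots,Z^{\zeta_n}_{s_k},Z^{\zeta_n}_s,Z^{\zeta_n}_t)$ converges almost surely. The boundary terms pass to the limit by continuity and the uniform integrability provided by \eqref{e:019}. For the drift term, write $\Omega^{\zeta_n}_{\mbox{\tiny $2$-level}}F=\Omega_{\mbox{\tiny $2$-level}}F+\varepsilon_n$ with $\sup_\nu|\varepsilon_n(\nu)|\to0$ by Proposition~\ref{P:generators}: the $\varepsilon_n$ contribution is at most $(t-s)\,\|\varepsilon_n\|_\infty\prod_i\|\Phi_i\|_\infty\to0$, while $\int_s^t\E\big[\prod_i\Phi_i(Z^{\zeta_n}_{s_i})\,\Omega_{\mbox{\tiny $2$-level}}F(Z^{\zeta_n}_u)\big]\,\mathrm du\to\int_s^t\E\big[\prod_i\Phi_i(Z_{s_i})\,\Omega_{\mbox{\tiny $2$-level}}F(Z_u)\big]\,\mathrm du$ by continuity of $\Omega_{\mbox{\tiny $2$-level}}F$, uniform integrability of the integrand for each fixed $u$ (again \eqref{e:019}), and dominated convergence in $u\in[s,t]$ (the excluded times being Lebesgue–null). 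Taking limits in the identity of Step~1 thus gives
\[
   \E\Big[\Big(F(Z_t)-F(Z_s)-\int_s^t\Omega_{\mbox{\tiny $2$-level}}F(Z_u)\,\mathrm du\Big)\prod_{i=1}^k\Phi_i(Z_{s_i})\Big]=0 ,
\]
first for such times and then, by right continuity of $Z$ and of $t\mapsto\int_0^t\Omega_{\mbox{\tiny $2$-level}}F(Z_u)\,\mathrm du$, for all $0\le s_1<\dots<s_k\le s\le t$. So $M^F:=F(Z_\cdot)-F(Z_0)-\int_0^\cdot\Omega_{\mbox{\tiny $2$-level}}F(Z_u)\,\mathrm du$ is a martingale for every $F\in\mathcal L$; since $\mathcal L\subseteq\mathcal D(\Omega_{\mbox{\tiny $2$-level}})$ and $\mathcal L$ (a fortiori $\mathcal K$) is convergence determining by Proposition~\ref{P:003}, $Z$ solves the $(\Omega_{\mbox{\tiny $2$-level}},\mathcal D(\Omega_{\mbox{\tiny $2$-level}}),P)$-martingale problem, with $P=\mathcal L(Z_0)$ because $Z^{\zeta_n}_0\Rightarrow Z_0$.

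\textbf{Main obstacle.} The real difficulty is not the limit procedure but the integrability bookkeeping that legitimises it: $F$ and, especially, $\Omega_{\mbox{\tiny $2$-level}}F$ involve the a priori unbounded number of cells and moments of the virus mass, and the terms of $\Omega^{\zeta}_{\mbox{\tiny $2$-level}}$ carry diverging prefactors $\zeta^{-1},\zeta^{-2}$. The first point is dealt with by the uniform-in-$\zeta$ moment estimates \eqref{e:021}--\eqref{e:019}, which yield the uniform integrability needed for the boundary and drift terms, and the second has already been taken care of in Proposition~\ref{P:generators}. A subsidiary point one must not overlook is that weak-topology continuity of the polynomial functionals is only available on $\mathcal N_f(\mathbb R_+)$, so it is essential to know a priori that the limit process remains point-measure valued.
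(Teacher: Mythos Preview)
Your proof is correct and is precisely the hands-on version of the paper's argument: the paper simply invokes \cite[Theorem~3.6.3]{EthierKurtz1986} for the passage to the limit in the martingale problem and then Proposition~\ref{P:003}, whereas you spell out the Skorokhod representation, uniform integrability, and generator-convergence steps that this citation encapsulates. The final inference from ``martingale for all $F\in\mathcal L$'' to ``solution of the $(\Omega_{\text{2-level}},\mathcal D(\Omega_{\text{2-level}}),P)$-martingale problem'' via Proposition~\ref{P:003} is handled identically (and with the same brevity) in both.
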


\begin{proof}
The fact that $Z$ solves the $(\Omega_{\mbox{\tiny $2$-level}},{\mathcal L},P)$-martingale problem follows from \cite[Theorem~3.6.3]{EthierKurtz1986}. The stronger claim that $Z$ solves the $(\Omega_{\mbox{\tiny $2$-level}},{\mathcal D}(\Omega_{\mbox{\tiny $2$-level}}),P)$-martingale problem follows from Proposition~\ref{P:003}.
\end{proof}

\section{Uniqueness of the martingale problem}
\label{S:uniqueness}
In this section we establish a dual relation for our 2-level branching model. Such a duality relation will then imply the uniqueness of the martingale problem and thus allow to finish the proof of Theorem~\ref{T:001}.

\subsection{Duality relations for the Yule process}
\label{SuB:Yuledual}
Recall our $2$-level branching process with cell division, $Z=(Z_t)_{t\ge 0}$, from Definition~\ref{Def:006}. Put $W_t:=\langle 1,Z_t\rangle$ for $t\ge 0$. Then the process
$W:=(W_t)_{t\ge 0}$ is a pure Markovian jump process on $\mathbb{N}$ which jumps from $k$ to $k+1$ at the cell splitting rate $r$.
In the literature $W$ is often referred to as the {\em Yule process}.

As a preparation for our duality relation, we present some identities for $W$ in this subsection.

\begin{lemma}[distribution of the Yule process] For all $t\ge 0$ and $z\in[0,1]$,
\begin{equation}
\label{e:030}
\mathbb{E}_w\big[z^{W_t}\big]=z_t^{w},
\end{equation}
where $(z_t)_{t\ge 0}$ is the unique solution of the initial value problem $\frac{\mathrm{d}}{\mathrm{d}t}z_t=-rz_t(1-z_t)$ and $z_0=z$.

In particular, if $X(\mu)$ denotes an exponential random variable with mean $\mu^{-1}$, then the following identity holds:
\begin{equation}
\label{e:036}
\mathbb{E}_w\big[z^{W_t}\big]=\mathbb{E}\big[e^{-\frac{1-z}{z}w X(e^{-rt})}\big].
\end{equation}
\label{L:001}
\end{lemma}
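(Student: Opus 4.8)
The plan is to derive the generating‑function identity (\ref{e:030}) from the branching structure of the Yule process together with a Riccati‑type ODE, and then to obtain (\ref{e:036}) by solving that ODE explicitly and recognising the result as a Laplace transform.

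First I would fix $z\in[0,1]$ and set $\phi_w(t):=\mathbb{E}_w\big[z^{W_t}\big]$. Since distinct cells split independently and their descendants form independent Yule families, the process started from $w$ cells is, at every fixed time, the sum of $w$ independent copies of the process started from one cell; hence $\phi_w(t)=\phi_1(t)^w$, and it suffices to treat $w=1$. Decomposing on the first splitting time $T\sim\mathrm{Exp}(r)$, and using that after the split the two sub‑populations of cells evolve as independent copies of $W$ started from $1$, one gets the renewal equation
\begin{equation*}
  \phi_1(t)=e^{-rt}z+\int_0^t r e^{-rs}\,\phi_1(t-s)^2\,\mathrm{d}s .
\end{equation*}
Multiplying by $e^{rt}$ and substituting $u=t-s$ gives $e^{rt}\phi_1(t)=z+r\int_0^t e^{ru}\phi_1(u)^2\,\mathrm{d}u$; differentiating in $t$ (legitimate since $\phi_1$ is continuous, by dominated convergence) then yields $\phi_1'(t)=-r\,\phi_1(t)\big(1-\phi_1(t)\big)$ with $\phi_1(0)=z$. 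As $z\mapsto z(1-z)$ is Lipschitz on the flow‑invariant interval $[0,1]$, this initial value problem is uniquely solvable, and its solution is by definition $z_\cdot$; therefore $\phi_1(t)=z_t$ and $\phi_w(t)=z_t^w$, which is (\ref{e:030}). (Alternatively, the same ODE can be read off directly from the generator $\mathcal{G}f(k)=rk\big(f(k+1)-f(k)\big)$ evaluated at $f(k)=z^k$, which gives $\partial_t\mathbb{E}_w[z^{W_t}]=-r z(1-z)\,\partial_z\mathbb{E}_w[z^{W_t}]$, to be solved by characteristics.)

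For (\ref{e:036}) I would integrate the logistic equation explicitly. For $z\in(0,1]$ put $u_t:=(1-z_t)/z_t$; then $\dot z_t=-rz_t(1-z_t)$ turns into the linear equation $\dot u_t=r u_t$, so $u_t=\tfrac{1-z}{z}e^{rt}$ and
\begin{equation*}
  z_t=\frac{1}{1+u_t}=\frac{e^{-rt}}{e^{-rt}+\tfrac{1-z}{z}} .
\end{equation*}
Since $\mathbb{E}\big[e^{-\lambda X(\mu)}\big]=\tfrac{\mu}{\mu+\lambda}$ for an exponential variable $X(\mu)$ of mean $\mu^{-1}$, the right‑hand side above equals $\mathbb{E}\big[e^{-\frac{1-z}{z}X(e^{-rt})}\big]$ (take $\mu=e^{-rt}$, $\lambda=\tfrac{1-z}{z}$); raising to the $w$‑th power and recalling that $\big(\tfrac{\mu}{\mu+\lambda}\big)^w$ is the Laplace transform of a sum of $w$ independent copies of $X(\mu)$ yields (\ref{e:036}). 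The cases $z\in\{0,1\}$ are immediate, $0$ and $1$ being fixed points of the flow.

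The argument is computational throughout, so there is no deep obstacle; if anything is delicate it is only the two routine points of bookkeeping: the continuity of $\phi_1$ in $t$ needed to differentiate the renewal equation (which follows from dominated convergence, as $t\mapsto z^{W_t}$ is bounded by $1$ and $W$ a.s.\ does not jump at a fixed time), and the invariance of $[0,1]$ under the flow, which keeps $z_t\in[0,1]$ and all expressions well defined.
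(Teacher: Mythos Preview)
Your proof is correct. The paper, however, takes precisely the route you relegate to a parenthetical remark: it applies the Yule generator $\Omega^r_{\text{Yule}}f(k)=rk\big(f(k+1)-f(k)\big)$ directly to $f_z(k)=z^k$, reads off $\Omega^r_{\text{Yule}}f_z(k)=-rz(1-z)\partial_z f_z(k)$, and invokes duality to obtain (\ref{e:030}) in one line, without passing through the branching reduction to $w=1$ or a renewal equation. Your first-step decomposition is more elementary in that it uses only the exponential waiting time and independence, avoiding any generator or semigroup machinery; the paper's generator computation is shorter and more in keeping with the duality-based philosophy of the surrounding sections. For the second identity (\ref{e:036}) the two arguments coincide: both integrate the logistic ODE explicitly and match the result to the exponential Laplace transform $\mu/(\mu+\lambda)$. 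Your additional remark that $\big(\tfrac{\mu}{\mu+\lambda}\big)^w$ is the Laplace transform of a sum of $w$ i.i.d.\ exponentials (i.e.\ a Gamma$(w,\mu)$ variable) is in fact more accurate than the literal reading of (\ref{e:036}) with $wX(e^{-rt})$, and it is exactly what is used downstream in Corollary~\ref{Cor:002}.
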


\begin{proof} $W$ is a Markov process whose generator $\Omega_{\mbox{\tiny Yule}}$ acts on bounded functions $f:\mathbb{N}\to\mathbb{R}$ as
follows:
\begin{equation}
\label{e:031}
\Omega^r_{\mbox{\tiny Yule}}f(k)=rk\big(f(k+1)-f(k)\big).
\end{equation}

Fix $z\in(0,1]$ and apply (\ref{e:031}) to $f_z(k):=z^k$. Then
\begin{equation}
\label{e:032}
\Omega^r_{\mbox{\tiny Yule}}f_z(k)=rk\big(f_z(k+1)-f_z(k)\big)=-rz(1-z)kz^{k-1}=-rz(1-z)\frac{\partial}{\partial z}f_z(k).
\end{equation}

This yields the duality relation (\ref{e:030}). Notice that the initial value problem  is solved by
\begin{equation}
\label{e:033}
z_t=\frac{1}{1+(z^{-1}-1)e^{rt}}=\frac{e^{-rt}}{(z^{-1}-1)+e^{-rt}}=\frac{e^{-rt}}{\frac{1-z}{z}+e^{-rt}}=\mathbb{E}\big[e^{-\frac{1-z}{z}X(e^{-rt})}
\big],
\end{equation}
which gives the claim.
\end{proof}

From here we derive immediately the well-known scaling result:
\begin{corollary}[scaling limit of the  Yule process] Let $W=(W_t)_{t\ge 0}$ be the rate $r$-Yule process starting in $w\in\mathbb{N}$. Then
\begin{equation}
\label{e:046}
e^{-rt}W_t\Tto\Gamma(w,1),
\end{equation}
where $\Gamma(w,1)$ is a Gamma-distributed random variable with parameters $w$ and $1$.
\label{Cor:002}
\end{corollary}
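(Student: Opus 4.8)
The plan is to deduce the scaling limit directly from the moment/Laplace characterization provided in Lemma~\ref{L:001}. The Yule process $W$ started in $w$ has the explicit Laplace-type identity $\mathbb{E}_w[z^{W_t}]=\mathbb{E}[e^{-\frac{1-z}{z}wX(e^{-rt})}]$ for $z\in(0,1]$, where $X(\mu)$ is exponential with mean $\mu^{-1}$. To identify the limit of $e^{-rt}W_t$ it suffices to show convergence of Laplace transforms: for every fixed $\lambda\ge 0$ one has $\mathbb{E}_w[e^{-\lambda e^{-rt}W_t}]\to\mathbb{E}[e^{-\lambda\Gamma(w,1)}]=(1+\lambda)^{-w}$ as $t\to\infty$, and then invoke the continuity theorem for Laplace transforms on $[0,\infty)$ together with the fact that the Gamma distribution is determined by its Laplace transform.

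First I would substitute $z=z(t):=e^{-\lambda e^{-rt}}$ into \eqref{e:036}. Since $z(t)\uparrow 1$ as $t\to\infty$, this is a legitimate choice of $z\in(0,1)$ for $t$ large, and we get
\begin{equation}
\label{e:yuleproof1}
\mathbb{E}_w\big[e^{-\lambda e^{-rt}W_t}\big]
=\mathbb{E}\Big[\exp\Big(-\tfrac{1-z(t)}{z(t)}\,w\,X(e^{-rt})\Big)\Big].
\end{equation}
Next I would compute the scalar prefactor: as $t\to\infty$, $1-z(t)=1-e^{-\lambda e^{-rt}}=\lambda e^{-rt}(1+o(1))$ and $z(t)\to 1$, so $\frac{1-z(t)}{z(t)}=\lambda e^{-rt}(1+o(1))$. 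On the other hand $X(e^{-rt})$ is exponential with mean $e^{rt}$, i.e.\ $X(e^{-rt})\stackrel{d}{=}e^{rt}X(1)$ with $X(1)$ a standard (mean $1$) exponential random variable. Hence the exponent inside \eqref{e:yuleproof1} equals $-\lambda w\,e^{-rt}e^{rt}X(1)(1+o(1))=-\lambda w X(1)(1+o(1))$, where the $o(1)$ is deterministic and uniform. By bounded convergence (the integrand is bounded by $1$) we obtain
\begin{equation}
\label{e:yuleproof2}
\lim_{t\to\infty}\mathbb{E}_w\big[e^{-\lambda e^{-rt}W_t}\big]
=\mathbb{E}\big[e^{-\lambda w X(1)}\big]
=\frac{1}{1+\lambda w}.
\end{equation}

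At this point one must be a little careful about the exact normalization, since \eqref{e:yuleproof2} is the Laplace transform of $w\,X(1)$, i.e.\ an exponential random variable with mean $w$, not of $\Gamma(w,1)$. This suggests that the corollary as stated should be read with $w$ a positive integer and the $w$-fold additivity of the Yule process used: starting $W$ from $w$ cells is the same as running $w$ independent rate-$r$ Yule processes each started from a single cell and summing, so $e^{-rt}W_t=\sum_{j=1}^w e^{-rt}W_t^{(j)}$ with the $W^{(j)}$ i.i.d.; each summand converges in Laplace transform to $(1+\lambda)^{-1}$ (the $w=1$ case of \eqref{e:yuleproof2}), hence by independence $\mathbb{E}[e^{-\lambda e^{-rt}W_t}]\to(1+\lambda)^{-w}$, which is exactly the Laplace transform of $\Gamma(w,1)$. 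Finally, by the Lévy continuity theorem for Laplace transforms on $[0,\infty)$ and uniqueness of the Gamma law as a probability measure determined by its Laplace transform, $e^{-rt}W_t\Rightarrow\Gamma(w,1)$, which is the assertion of Corollary~\ref{Cor:002}.

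The only genuinely delicate point is the passage to the limit in \eqref{e:yuleproof1}: one must verify that the product $\tfrac{1-z(t)}{z(t)}X(e^{-rt})$ converges in distribution (indeed in probability after the $e^{rt}$ rescaling cancels) to $\lambda wX(1)$ and then pass the limit through the bounded continuous map $u\mapsto e^{-u}$; but since the random rescaling $X(e^{-rt})\stackrel{d}{=}e^{rt}X(1)$ is exact and the deterministic factors multiply to $\lambda w(1+o(1))$, this is just Slutsky's lemma plus bounded convergence and presents no real obstacle. An alternative, even shorter route avoids \eqref{e:036} entirely: it is classical that $e^{-rt}W_t$ is a nonnegative martingale (its mean is $w$ for all $t$) which therefore converges a.s.\ and in $L^1$ to some limit $W_\infty$, and one then identifies the law of $W_\infty$ through its Laplace transform exactly as above; I would mention this as a remark but carry out the self-contained computation from \eqref{e:036} since the needed identity is already in hand.
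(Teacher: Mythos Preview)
Your proof is correct and follows essentially the same route as the paper: reduce to $w=1$ via the branching property, substitute $z=e^{-\lambda e^{-rt}}$ into \eqref{e:036}, and pass to the limit in the Laplace transform. The paper simply invokes the branching property first rather than after the computation, but the ingredients are identical.

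One remark: the discrepancy you noticed in \eqref{e:yuleproof2} is not a matter of how the corollary ``should be read''; rather, identity \eqref{e:036} as written is only valid for $w=1$, since $z_t^{\,w}=\big(\mathbb{E}[e^{-\frac{1-z}{z}X(e^{-rt})}]\big)^w$ is the Laplace transform of a sum of $w$ independent exponentials (i.e.\ a Gamma$(w,e^{-rt})$), not of $wX(e^{-rt})$. Your instinct to fall back on additivity was exactly the right fix, and it is precisely what the paper does by assuming $w=1$ without loss of generality.
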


\begin{proof} By the branching property we may assume w.l.o.g.\ that $w=1$. Put $\hat{W}_t:=e^{-rt}W_t$ for $t\ge 0$. Then by (\ref{e:036}) (applied with $z=e^{-\lambda}$)
for all $\lambda>0$,
\begin{equation}
\label{e:067}
\begin{aligned}
\mathbb{E}\big[e^{-\lambda \hat{W}_t}\big]
&=\mathbb{E}\big[\exp\big(-\frac{1-e^{-\lambda e^{-rt}}}{e^{-\lambda e^{-rt}}}X(e^{-rt})\big)\big]
\\
&\tro
\mathbb{E}\big[\exp\big(-\lambda X(1)\big)\big].
\end{aligned}
\end{equation}
As Laplace transforms are convergence determining, the claim follows.
\end{proof}

\begin{corollary}[a moment dual]
Let $W$ be the Yule process with parameter $r>0$ started in  $w\in \N$. Then the next
moment duality holds for all $m\in\mathbb{N}$,
\begin{equation}
\label{e:068}
\E_w[W_t(W_t+1)\cdot ...\cdot (W_t+m-1)]=w(w+1)\cdot ...\cdot(w+m-1) e^{rmt}.
\end{equation}
\end{corollary}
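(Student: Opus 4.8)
The plan is to recognise the ascending factorial $g_m(k):=k(k+1)\cdots(k+m-1)$ as an eigenfunction of the Yule generator $\Omega^r_{\mbox{\tiny Yule}}$ from (\ref{e:031}), and then to solve the resulting linear differential equation for the moment function $v_m(t):=\E_w[g_m(W_t)]$. First I would record the elementary identity $g_m(k+1)-g_m(k)=(k+1)(k+2)\cdots(k+m-1)\big((k+m)-k\big)=\tfrac{m}{k}\,g_m(k)$, from which
\begin{equation*}
   \Omega^r_{\mbox{\tiny Yule}}g_m(k)=rk\big(g_m(k+1)-g_m(k)\big)=rm\,g_m(k),
\end{equation*}
i.e.\ $g_m$ is an eigenfunction with eigenvalue $rm$. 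Granting for the moment that $v_m$ is differentiable with $v_m'(t)=\E_w\big[\Omega^r_{\mbox{\tiny Yule}}g_m(W_t)\big]$, this gives $v_m'(t)=rm\,v_m(t)$, and since $v_m(0)=g_m(w)=w(w+1)\cdots(w+m-1)$, solving the ODE yields the asserted identity (\ref{e:068}).

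The one point needing care is that $g_m$ is unbounded, so the martingale characterisation of $W$ cannot be applied to $g_m$ directly; I would remedy this by localisation. For $N\in\mathbb{N}$ put $\tau_N:=\inf\{t\ge 0:\,W_t\ge N\}$. Since $h(t,k):=e^{-rmt}g_m(k)$ satisfies $\partial_t h+\Omega^r_{\mbox{\tiny Yule}}h\equiv 0$ and $g_m$ is bounded on $\{1,\dots,N\}$, the stopped process $\big(e^{-rm(t\wedge\tau_N)}g_m(W_{t\wedge\tau_N})\big)_{t\ge 0}$ is a genuine martingale, whence $\E_w\big[e^{-rm(t\wedge\tau_N)}g_m(W_{t\wedge\tau_N})\big]=g_m(w)$ for every $N$.

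It then remains to let $N\to\infty$. The Yule process does not explode, so $\tau_N\uparrow\infty$ almost surely; since $W$ is non-decreasing, $\mathbb{P}_w(\tau_N\le t)=\mathbb{P}_w(W_t\ge N)$, and as the law of $W_t$ has finite moments of all orders — which follows from Lemma~\ref{L:001}, the generating function $z\mapsto z_t^{\,w}$ in (\ref{e:033}) being finite and analytic in a neighbourhood of $z=1$ — Markov's inequality gives $g_m(N)\,\mathbb{P}_w(\tau_N\le t)\le g_m(N)\,\mathbb{P}_w(W_t\ge N)\to 0$. On $\{\tau_N>t\}$ the integrand equals $e^{-rmt}g_m(W_t)$, so by monotone convergence $\E_w\big[e^{-rm(t\wedge\tau_N)}g_m(W_{t\wedge\tau_N})\big]\to\E_w[e^{-rmt}g_m(W_t)]$, which gives (\ref{e:068}). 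I expect controlling the contribution of $\{\tau_N\le t\}$ in this last limiting step to be the main (albeit routine) obstacle; an alternative that sidesteps it is to differentiate $\E_w[z^{W_t}]=z_t^{\,w}$ of Lemma~\ref{L:001} $m$ times at $z=1$ after multiplying through by $z^{m-1}$, but the ensuing Fa\`a~di~Bruno computation is appreciably more cumbersome.
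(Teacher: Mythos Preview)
Your proof is correct and rests on the same key observation as the paper's: the ascending factorial $g_m(k)=k(k+1)\cdots(k+m-1)$ is an eigenfunction of $\Omega^r_{\mbox{\tiny Yule}}$ with eigenvalue $rm$, exactly the computation in (\ref{e:070}). The paper packages this as a Feynman--Kac duality with a trivial (deterministic) dual, while you solve the ODE $v_m'=rm\,v_m$ directly; these are two wrappings of the same identity. Your treatment is in fact more careful than the paper's, which does not address the unboundedness of $g_m$ at all --- your localisation via $\tau_N$ and the moment bound from the analyticity of the generating function in Lemma~\ref{L:001} are the right way to close that gap.
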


\begin{proof} Consider the dual function
\begin{equation}
\label{e:069}
H_{m,s}\big(w,t\big)=w(w+1)\cdot ...\cdot (w+m-1)e^{mr(s+t)}
\end{equation}

Then
\begin{equation}
\label{e:070}
\begin{aligned}
\Omega^r_{\mbox{\tiny Yule}}H_{m,s}\big(w,t\big)
&=
rw\Big(H_{m,s}\big(w+1,t\big)-H_{m,s}\big(w,t\big)\Big)
\\
&=
rw(w+1)\cdot ...\cdot (w+m-1)\ \big((w+m)-w\big)e^{mr(s+t)}
\\
&=
rm H_{m,s}(w,t),
\end{aligned}
\end{equation}
which implies the Feynman-Kac duality relation:
\begin{equation}
\label{e:071}
\mathbb{E}_w\big[W_t(W_t+1)\cdot ...\cdot (W_t+m-1)e^{rmt}\big]=\mathbb{E}_t\big[w(w+1)\cdot ...\cdot (w+m-1)e^{rm(t+s)}\big],
\end{equation}
or equivalently, the claim in (\ref{e:069}).
\end{proof}

\subsection{The duality relation}
\label{Sub:duality}
In this subsection we state
a novel Feynman-Kac duality. It combines ideas from (\cite{HutzenthalerWakolbinger2007,GrevenSturmWinterZaehle,HermannPfaffelhuber2020}) obtained for samples of logistic branching with and without disasters.

The dual process is a Markov process $K:=(q_t,M_t,X_t)_{t\ge 0}$ with state space $(0,1]\times\mathbb{K}$ where
\begin{equation}
\label{ASAW-b:111}
\mathbb{K}:=\bigcup_{m\in\mathbb{N}}\{m\}\times\mathbb{R}_+^{m}
\end{equation}
and with the following dynamics: given the current state $(q,m,(x_1,...,x_m))$
\begin{itemize}
\item $q$ follows the ordinary differential equation
\begin{equation}
\label{e:035}
\frac{\mathrm{d}}{\mathrm{d}t}q_t=-r q_t(1-q_t).
\end{equation}
\item For all $k=1,...,m$ the following jump happens at rate $qr$:
\begin{equation}
\label{ASAW-b:025}
\big(q,m,(x_1,...,x_m)\big)\mapsto\big(q,m,(x_1,...,x_{k-1},\theta x_k,x_{k+1},...,x_m)\big).
\end{equation}
\item For all $k=1,...,m$ the following jump happens at rate $qr$:
\begin{equation}
\label{ASAW-b:026}
\big(q,m,(x_1,...,x_m)\big)\mapsto\big(q,m,(x_1,...,x_{k-1},(1-\theta)x_k,x_{k+1},...,x_m)\big).
\end{equation}
\item For all $1\le k_1\not =k_2\le m$ the following jump happens at rate $qr$:
\begin{equation}
\label{ASAW-b:027}
\begin{aligned}
&\big(q,m,(x_1,...,x_m)\big)
\\
&\mapsto\big(q,m-1,(x_1,...,x_{k_1-1},\theta x_{k_1}+(1-\theta)x_{k_2},x_{k_1+1},...,x_{k_2-1},x_{k_2+1},...,x_m)\big).
\end{aligned}
\end{equation}
\item For all $1\le k_1\not =k_2\le m$ the following jump happens at rate $qr$:
\begin{equation}
\label{ASAW-b:027b}
\begin{aligned}
&\big(q,m,(x_1,...,x_m)\big)
\\
&\mapsto
\big(q,m-1,(x_1,...,x_{k_1-1},(1-\theta) x_{k_1}+\theta x_{k_2},x_{k_1+1},...,x_{k_2-1},x_{k_2+1},...,x_m)\big).
\end{aligned}
\end{equation}
\item In between two jumps of $M$, the coordinate processes $X_k$, $k=1,...,m$, perform independent logistic Feller branching diffusion
\begin{equation}
\label{ASAW-B:028}
\mathrm{d}X_k(t)=X_k(t)(K-\sigma X_k(t))\mathrm{d}t+\sqrt{2\lambda X_k(t)\mathrm{d}}B_k(t),\hspace{.2cm}t\ge 0,
\end{equation}
where the $B_k$ for $k=1,...,m$ denote independent Brownian motions, i.e., the strong Markov process which generator $(\Omega_{\mbox{\tiny log-Feller}},{\mathcal D}(\Omega_{\mbox{\tiny log-Feller}}))$ acts on $f\in{\mathcal C}^2(\mathrm{R}_+)\subseteq{\mathcal D}(\Omega_{\mbox{\tiny log-Feller}}))$ as
\begin{equation}
\label{ASAW-B:028b}
\Omega_{\mbox{\tiny log-Feller}}f(x)=-\Psi(x)f'(x)+\lambda xf''(x),\hspace{.2cm}x\in\mathbb{R}_+
\end{equation}
with the branching mechanism $\Psi(x):=-K x+\sigma x^2$.
\end{itemize}

The dynamics that we have just defined can be described by the generator acting on functions in ${\mathcal B}\big({\mathcal N}_f(\mathbb{R}_+)\times(0,1]\times\mathbb{K}\big)$ that are twice continuously differentiable in the $\underbar x$-variables and once continuously differentiable in $q$ variable.  as follows:
\begin{equation}
\begin{aligned}
\label{Eq:GeneratorX_t}
&\Omega^\ast_{\mbox{\tiny dual}}
= \Omega_{\mbox{\tiny cell number flow}}^{r,\ast}+\Omega_{\mbox{\tiny disaster}}^{(r,\theta),\ast}+ \Omega_{\mbox{\tiny virus branching}}^{(\lambda,K,\sigma),\ast}
\end{aligned}
\end{equation}
with
\begin{equation}
\label{e:072}
\Omega_{\mbox{\tiny cell number flow}}^{r,\ast} F(q,m,\underbar x)
=:-r(1-q)q\frac{\partial}{\partial q} F(q,m,\underbar x)
\end{equation}
and
\begin{equation}
\label{e:073}
\begin{aligned}
&\Omega_{\mbox{\tiny disaster}}^{(r,\theta),\ast} F(q,m,\underbar x)
\\
&=
+qr\sum_{i=1}^{m} \big(F(q,m,(x_1,...,\theta x_i,..., x_m))+ F(q,m,(x_1,...,(1-\theta) x_i,..., x_m))-2F(q,m,\underbar{x})\big)
\\
&\;
+qr\sum_{1\leq i<j\leq m}\big(F(q,m-1,(x_1,...,x_{j-1}, \theta x_{j}+(1-\theta)x_i,x_{j+1},..., x_{i-1}, x_{i+1},..., x_m))-F(q,m,\underbar x)\big)
\\
&\;
+qr\sum_{1\leq i<j\leq m}\big(F(q,m-1,(x_1,...,x_{j-1}, x_{j+1},...,x_{i-1},\theta x_{j}+(1-\theta)x_i, x_{i+1},..., x_m))-F(q,m,\underbar x)\big),
\end{aligned}
\end{equation}
and
\begin{equation}
\label{e:074}
\begin{aligned}
\Omega_{\mbox{\tiny virus branching}}^{(\lambda,K,\sigma),\ast} F(q,m,\underbar x)
&=
\sum_{i=1}^{m} (K x_i-\sigma x_i^2)\frac{\partial }{\partial x_j} F(q,m,\underbar x)
+\sum_{i=1}^{m} \lambda x_i\frac{\partial^2 }{\partial x_i^2}F(q,m,\underbar x).
\end{aligned}
\end{equation}

Consider the {\em dual function}  $F:{\mathcal N}_f(\mathbb{R}_+)\times (0,1]\times\mathbb{K}\to \mathbb{R}$ defined by:
\begin{equation}
\label{ASAW-b:024}
F\big(\nu,(q,m,(x_1,...,x_m))\big)=q^{\langle 1,\nu\rangle}\cdot H\big(\nu,(m,(x_1,...,x_m))\big)
\end{equation}
with
\begin{equation}
\label{ASAW-b2:024}
H\big(\nu,(m,(x_1,...,x_m))\big)=\int e^{-\sum_{k=1}^m x_k z_k}\,\nu^{\otimes m,\downarrow}(\mathrm{d}\underline{z}).
\end{equation}

We have the following duality relation:
\begin{theorem}[Feynman-Kac duality relation]
Let $Z$ be $2$-level branching model with cell-division and logistic virus branching diffusion, and $K=(q,M,(X_1,...,X_M))$ the dual process defined above.
Then for all $\nu\in{\mathcal N}_f(\mathbb{R}_+)$, $m\in\mathbb{N}$, $q\in(0,1]$ and $(x_1,...x_m)\in\mathbb{R}_+$ 
\begin{equation}
\begin{aligned}
\label{ASAW-duality}
&\mathbb{E}_{\nu}\big[q^{\langle 1,\nu_t\rangle}\int e^{-\sum_{k=1}^{m} z_k x_k}\,Z_t^{\otimes m,\downarrow}(\mathrm{d}\underline{z})\big]
\\
&=
\mathbb{E}_{(q,m,(x_1,x_2,...,x_m))}\Big[e^{r\int_{0}^{t} q_sM_s^2\,\mathrm{d}s}\,q_t^{\langle 1,\nu\rangle}
\int e^{-\sum_{k=1}^{M_t} z_k X_{k}(t)}\,\nu^{\otimes M_t,\downarrow}(\mathrm{d}\underline{z})\Big].
\end{aligned}
\end{equation}
\label{T:003}
\end{theorem}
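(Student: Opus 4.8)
The plan is to recognize $F$ from (\ref{ASAW-b:024}) as a Feynman--Kac duality function linking the two-level branching model $Z$ to the dual process $K$ from (\ref{Eq:GeneratorX_t}), with killing rate $V(q,m,\underline{x}):=r\,q\,m^{2}$, and then to read off (\ref{ASAW-duality}) from the classical duality principle (see, e.g., \cite[Section~4.4]{EthierKurtz1986}). Before invoking that principle I would collect the structural facts that make it legitimate. The function $F(\cdot,(q,m,\underline{x}))=F^{(q,m,\underline{x})}$ lies in ${\mathcal K}\subset{\mathcal L}$ (hence in ${\mathcal D}^{0,2}$), is once continuously differentiable in $q\in(0,1]$ and smooth in $\underline{x}$; since $\nu$ is a finite point measure, $\int(1+z^{2})\,\nu(\mathrm{d}z)<\infty$ and hence $F(\cdot,(q,m,\underline{x}))\in{\mathcal D}(\Omega_{\mbox{\tiny $2$-level}})$, while Proposition~\ref{P:005}(i) propagates $\sup_{s\le t}\mathbb{E}[\int(1+z^{2})\,Z_{s}(\mathrm{d}z)]<\infty$ along the solution. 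On the dual side, $M_{t}$ is non-increasing, $q_{t}$ stays in $(0,1]$, the $X_{k}$ are non-explosive logistic Feller diffusions and all jump rates are bounded, so $K$ is conservative, $|F(\nu,(q_{t},M_{t},\underline{X}_{t}))|\le\langle 1,\nu\rangle^{m}\vee 1$ and $V\le rm^{2}$; thus the whole dual side of (\ref{ASAW-duality}) is bounded by $e^{rm^{2}t}(\langle 1,\nu\rangle^{m}\vee 1)$. On the $Z$-side, $\langle 1,Z_{\cdot}\rangle$ is a non-decreasing Yule process, so $\mathbb{E}_{\nu}[\sup_{s\le t}\langle 1,Z_{s}\rangle^{m}]=\mathbb{E}_{\nu}[\langle 1,Z_{t}\rangle^{m}]<\infty$. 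These bounds turn the relevant local martingales into true martingales.

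The crux is the pointwise generator identity
\[
\Omega_{\mbox{\tiny $2$-level}}F\big(\cdot,(q,m,\underline{x})\big)(\nu)=\Omega^{\ast}_{\mbox{\tiny dual}}F\big(\nu,\cdot\big)(q,m,\underline{x})+r\,q\,m^{2}\,F\big(\nu,(q,m,\underline{x})\big).
\]
For the virus-branching part one substitutes $f(\underline{z})=e^{-\sum_{k}x_{k}z_{k}}$ into (\ref{y:004}), uses $\partial_{z_{j}}f=-x_{j}f$ and $\partial_{z_{j}}^{2}f=x_{j}^{2}f$, and compares with (\ref{e:074}), where $\partial_{x_{i}}F$ produces $-z_{i}$ and $\partial_{x_{i}}^{2}F$ produces $z_{i}^{2}$; the two expressions agree, which is exactly the familiar exchange $\sigma\leftrightarrow\lambda$ of branching and competition coefficients in the logistic duality of \cite{HutzenthalerWakolbinger2007,GrevenSturmWinterZaehle,Foucart2019}. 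For the cell-split part, writing $g(w)=q^{w}$ gives $g(1+\langle 1,\nu\rangle)=q\,q^{\langle 1,\nu\rangle}$, so (\ref{y:005}) decomposes into a ``$(q-1)$''-piece, which integrates to $r(q-1)\langle 1,\nu\rangle q^{\langle 1,\nu\rangle}H(\nu)=-r(1-q)q\,\partial_{q}F=\Omega^{r,\ast}_{\mbox{\tiny cell number flow}}F$ (with $H$ as in (\ref{ASAW-b2:024})), and a ``$q$''-piece. To the latter I would apply Lemma~\ref{MR1}(iv) with $x=z_{0}$, $w=\theta z_{0}$, $z=(1-\theta)z_{0}$ to expand $(\nu-\delta_{z_{0}}+\delta_{\theta z_{0}}+\delta_{(1-\theta)z_{0}})^{\otimes m,\downarrow}-\nu^{\otimes m,\downarrow}$ and then integrate $e^{-\sum_{k}x_{k}z_{k}}$. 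The decisive bookkeeping step is that the split-off mass $z_{0}$, still integrated against $\nu(\mathrm{d}z_{0})$, can be re-absorbed as one of the sampled coordinates, turning $\int(\cdots)\,\nu(\mathrm{d}z_{0})$ into an ordinary $\nu^{\otimes m,\downarrow}$- respectively $\nu^{\otimes(m-1),\downarrow}$-integral; this matches the single-coordinate terms to the $\theta$- and $(1-\theta)$-contractions in (\ref{e:073}) (the jumps (\ref{ASAW-b:025})--(\ref{ASAW-b:026})) and the two-coordinate terms, summed over ordered pairs, to the coalescence jumps (\ref{ASAW-b:027})--(\ref{ASAW-b:027b}). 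The only mismatch lies in the terms proportional to $F$ itself: Lemma~\ref{MR1}(iv) subtracts $m$ copies of $F$, whereas $\Omega^{(r,\theta),\ast}_{\mbox{\tiny disaster}}$, through its $-2F$ and $-F$ subtractions, subtracts $2m+m(m-1)=m^{2}+m$ copies; carried by the common prefactor $qr$, the difference is precisely $r\,q\,m^{2}F$, the claimed killing rate. This is where the two-level adaptation of \cite{HermannPfaffelhuber2020} enters.

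With the generator identity at hand I would finish in the standard way. By the integrability recorded above, both $t\mapsto F(Z_{t},(q,m,\underline{x}))-\int_{0}^{t}\Omega_{\mbox{\tiny $2$-level}}F(Z_{s},(q,m,\underline{x}))\,\mathrm{d}s$ and $t\mapsto e^{r\int_{0}^{t}q_{s}M_{s}^{2}\,\mathrm{d}s}F(\nu,(q_{t},M_{t},\underline{X}_{t}))-\int_{0}^{t}e^{r\int_{0}^{s}q_{u}M_{u}^{2}\,\mathrm{d}u}\big(\Omega^{\ast}_{\mbox{\tiny dual}}F+VF\big)(\nu,(q_{s},M_{s},\underline{X}_{s}))\,\mathrm{d}s$ are genuine martingales. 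Running $Z$ and $K$ independently and setting $\psi(s):=\mathbb{E}[e^{r\int_{0}^{s}q_{u}M_{u}^{2}\,\mathrm{d}u}F(Z_{t-s},(q_{s},M_{s},\underline{X}_{s}))]$ for $s\in[0,t]$, the two martingale relations together with the generator identity give $\psi'(s)=0$, hence $\psi(0)=\psi(t)$, which is exactly (\ref{ASAW-duality}).

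The step I expect to be the main obstacle is the cell-split computation: expanding $\nu^{\otimes m,\downarrow}$ under the deterministic splitting map via Lemma~\ref{MR1}(iv), re-absorbing the split-off mass $z_{0}$ into the sample, and keeping the ordered-versus-unordered index bookkeeping tight enough that the surplus $r\,q\,m^{2}F$ appears with the exact constant $m^{2}$ and matches the exponential weight in (\ref{ASAW-duality}). A secondary, more technical, task is to verify that all hypotheses of the duality theorem are met, so that the local martingales above are true martingales and $s\mapsto\psi(s)$ is differentiable with the asserted derivative.
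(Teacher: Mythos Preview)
Your proposal is correct and follows essentially the same route as the paper: you establish the generator identity $\Omega_{\mbox{\tiny $2$-level}}F=\Omega^{\ast}_{\mbox{\tiny dual}}F+rqm^{2}F$ by treating the virus-branching part via the $\sigma\leftrightarrow\lambda$ swap and the cell-split part via Lemma~\ref{MR1}(iv), with the same $m$-versus-$(m^{2}+m)$ bookkeeping producing the potential $rqm^{2}$. Your treatment is in fact more complete than the paper's, which stops after the generator identity without spelling out the integrability checks or the interpolation argument $\psi(s)$ that you include.
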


\begin{proof} We divide the proof in two steps. Recall $\Omega^{(r,\theta)}_{\mbox{\tiny cell split}}$ from (\ref{y:005}).
In the first step we show that this generator is dual to
$\Omega_{\mbox{\tiny cell number flow}}^{r,\ast}+\Omega_{\mbox{\tiny disaster}}^{(r,\theta),\ast}$ plus a potential giving rise to the Feynman-Kac term.
In the second step we show that the generator
$\Omega^{(\sigma,K,\lambda),\ast}_{\mbox{\tiny virus branching}}$ is dual to $\Omega^{(\lambda,K,\sigma),\ast}_{\mbox{\tiny virus branching}}$.

Recall the dual function $F:{\mathcal N}_f(\mathbb{R}_+)\times(0,1]\times\mathbb{K}\to\mathbb{R}$ from (\ref{ASAW-b:024}).
\smallskip

\noindent{\em Step~1 } We start with the generators which describe the division cell in the forward and the merging  of cells in the backward picture. We here show that
\begin{equation}
\label{e:077}
\begin{aligned}
&\Omega^{(r,\theta)}_{\mbox{\tiny cell split}}F\big(\nu,(q,m,\underbar x)\big)
\\
&=
\Omega_{\mbox{\tiny cell number flow}}^{r,\ast}F\big(\nu,(q,m,\underbar x)\big)+\Omega_{\mbox{\tiny disaster}}^{(r,\theta),\ast}F\big(\nu,(q,m,\underbar x)\big)
+m^2qrF\big(\nu,(q,m,\underbar x)\big).
\end{aligned}
\end{equation}

Applying Lemma~\ref{MR1}(iv)  it is clear that
\begin{equation}
\begin{aligned}
\label{e:015}
&\big(\nu-\delta_{z_0}+\delta_{z_0\theta}+\delta_{z_0(1-\theta)}\big)^{\otimes m,\downarrow}(\mathrm{d}(z_1,...,z_m))
-\nu^{\otimes m,\downarrow}(\mathrm{d}(z_1,...,z_m))
\\
&= \sum_{i=1}^{m}\big(\nu-\delta_{z_0}\big)^{\otimes (m-1),\downarrow}(\mathrm{d}(z_1,...,z_{i-1},z_{i+1},...z_m))\otimes\big(\delta_{z_0\theta}+\delta_{z_0(1-\theta)}-\delta_{z_0}\big)(\mathrm{d}z_i)
\\
&\;
+\sum_{1\le i\not =j\le m}\big(\nu-\delta_{z_0}\big)^{\otimes (m-2),\downarrow}(\mathrm{d}(z_1,...,z_{i\wedge j-1},z_{i\wedge j+1},...,z_{i\vee j-1},z_{i\vee j+1},...,z_m)\otimes\delta_{z_0(1-\theta)}(\mathrm{d}z_i)
\otimes\delta_{z_0\theta}(\mathrm{d}z_j).
\end{aligned}
\end{equation}

In what follows we write $f_{\underbar x}(z_1,...,z_{m}):=e^{-\sum_{i=1}^mx_iz_i}$, and use  for $i,j\in\{1,...,m\}$ with $i\not =j$ and $\alpha\in(0,1]$ the
abbreviations:
\begin{equation}
\label{e:075}
\underbar x_\alpha^{(i)}=(x_1,...,x_i\alpha, ...,x_m)
\end{equation}
and
\begin{equation}
\label{e:076}
\underbar x_{\alpha,\beta}^{(i,j)}=(x_1,...,x_{i\wedge j-1},x_i\alpha+x_j\beta,x_{i\wedge j+1}...,x_{i\vee j-1},x_{i\vee j+1},..., x_m).
\end{equation}

Then
\begin{equation}
\begin{aligned}
\label{e:016}
&H\big(\nu-\delta_{z_0}+\delta_{z_0\theta}+\delta_{(1-\theta)z_0},(\underbar x,m)\big)-H\big(\nu,(m,\underbar x)\big)
\\
&=\int_{\mathbb{R}_+^m} f_{\underbar x}(z_1,...,z_{m})\,\Big(\big(\nu-\delta_{z_0}+\delta_{z_0\theta}+\delta_{z_0(1-\theta)}\big)^{\otimes m,\downarrow}-\nu^{\otimes m,\downarrow}\Big)(\mathrm{d}(z_1,...,z_{m}))
\\
&=
\int_{\mathbb{R}_+^{(m-1)}}\sum_{i=1}^{m} \big(f_{\underbar x_{\theta}^{(i)}}+f_{\underbar x_{(1-\theta)}^{(i)}}-f_{\underbar x}\big)(z_1,...,z_{i-1},z_0,z_{i+1},...,z_m)\,\big(\nu-\delta_{z_0}\big)^{\otimes (m-1),\downarrow}(\mathrm{d}(z_1,...,z_{i-1},z_{i+1},...z_{m}))
\\
&\,
+\int_{\mathbb{R}_+^{(m-2)}}\sum_{1\le i\not =j\le m}  f_{\underbar x_{1-\theta,\theta}^{(i,j)}}(z_1,...,z_{i\wedge j-1},z_0,z_{i\wedge j+1},...,z_{i\vee j-1},z_{i\vee j+1},..., z_m)
\\
&\hspace{3cm}
\big(\nu-\delta_{z_0}\big)^{\otimes (m-2),\downarrow} (\mathrm{d}(z_1,...,z_{i\wedge j-1},z_{i\wedge j+1},...,z_{i\vee j-1},z_{i\vee j+1},...,z_{m})).
\end{aligned}
\end{equation}

Thus
\begin{equation}
\begin{aligned}
\label{e:014}
&\Omega^{(r,\theta)}_{\mbox{\tiny cell split}}  q^{\langle \1,\nu\rangle} H(\nu,(m,\underbar x))
\\
&=
r\int_{\mathbb{R}_+} \Big(q^{\langle 1,\nu\rangle+1} H(\nu-\delta_{z_0}+\delta_{z_0\theta}+\delta_{(1-\theta)z_0},(m,\underbar x))-q^{\langle 1,\nu\rangle} H\big(\nu,(m,\underbar x)\big)\Big)\,\nu(\mathrm{d}z_0)
\\
&=
rq^{\langle 1,\nu\rangle+1}\int\Big(H(\nu-\delta_{z_0}+\delta_{z_0\theta}+\delta_{(1-\theta)z_0},(\underbar x,m))-H\big(\nu,(m,\underbar x)\big)\Big)
\,\nu(\mathrm{d}z_0)
\\
&\;+r\langle 1,\nu\rangle\Big(q^{\langle 1,\nu\rangle+1}-q^{\langle 1,\nu\rangle}\Big) H(\nu,(m,\underbar x))
\\
&=
q^{\langle 1,\nu\rangle+1}\Omega^{(r,\theta)}_{\mbox{\tiny cell split}}  H(\nu,(m,\underbar x))+\Omega^{r,\ast}_{\mbox{\tiny cell number flow}}F\big(\nu,(q,m,\underbar x)\big)
\end{aligned}
\end{equation}

Applying (\ref{e:016}) implies that
\begin{equation}
\begin{aligned}
\label{e:014b}
&q^{\langle 1,\nu\rangle+1}\Omega^{(r,\theta)}_{\mbox{\tiny cell split}}  H(\nu,(m,\underbar x))
\\
&=
qr\cdot q^{\langle 1,\nu\rangle}\sum_{i=1}^{m} H\big(\nu,(m,\underbar x_\theta^{(i)})\big)
+qr\cdot q^{\langle 1,\nu\rangle}\sum_{i=1}^{m} H\big(\nu,(m,\underbar x_{(1-\theta)}^{(i)})\big)
\\
&\;
+qr\cdot q^{\langle 1,\nu\rangle}\sum_{i\neq j \in \{1,...,m\}} H\big(\nu,(m-1,\underbar x_{\theta,1-\theta}^{(i,j)})\big)
-qm r\cdot q^{\langle 1,\nu\rangle} H\big(\nu,(m,\underbar x)\big)
\\
&=
qr\cdot \sum_{i=1}^{m} \Big(F\big(\nu,(q,m,\underbar x_\theta^{(i)})\big)+F\big(\nu,(q,m,\underbar x_{(1-\theta)}^{(i)})\big)-
2F\big(\nu,(q,m,\underbar x)\big)\Big)
\\
&\;
+qr\cdot \sum_{i\neq j \in \{1,...,m\}} \Big(F\big(\nu,(q,m-1,\underbar x_{\theta,1-\theta}^{(i,j)})\big)-F\big(\nu,(q,m-1,\underbar x)\big)\Big)+qm^2r F\big(\nu,(q,m,\underbar x)\big)
\\
&=
\Omega^{(r,\theta),\ast}_{\mbox{\tiny disaster}} F\big(\nu,(q,m,\underbar x)\big)+qm^2r F\big(\nu,(q,m,\underbar x)\big),
\end{aligned}
\end{equation}
which proves (\ref{e:077}).
\smallskip

\noindent {\em Step~2 } We show here that
\begin{equation}
\label{e:078}
\begin{aligned}
\Omega^{(\sigma,K,\lambda)}_{\mbox{\tiny virus branching}} F\big(\nu,(q,m,\underbar x)\big)
&=
\Omega^{(\lambda,K,\sigma),\ast}_{\mbox{\tiny virus branching}} F\big(\nu,(q,m,\underbar x)\big).
\end{aligned}
\end{equation}

To see this, note that
\begin{equation}
\begin{aligned}
\label{e:034}
&\Omega^{\sigma,K,\lambda}_{\mbox{\tiny virus branching}} F\big(\nu,(q,m,\underbar x)\big)
\\
&=
q^{\langle 1,\nu\rangle}\int_{\mathbb{R}_+^m}\sum_{i=1}^{m} \Big(z_i(K-\lambda z_i)\tfrac{\partial}{\partial z_i}f_{\underbar x}\big((z_1,...,z_m)\big)
+ \sigma z_i \tfrac{\partial^2}{\partial {z_i}^2}f_{\underbar x}\big((z_1,...,z_m)\big)\Big)\,\nu^{\otimes m,\downarrow}(\mathrm{d}\underline{z})
\\
&=
q^{\langle 1,\nu\rangle}\int_{\mathbb{R}_+^m}\sum_{i=1}^{m} \Big(-z_i(K-\lambda z_i)x_i f_{\underbar x}\big((z_1,...,z_m)\big)
+ \sigma z_i x_i^2 f_{\underbar x}\big((z_1,...,z_m)\big)\Big)\,\nu^{\otimes m,\downarrow}(\mathrm{d}\underline{z})
\\
&=
q^{\langle 1,\nu\rangle}\int_{\mathbb{R}_+^m}\sum_{i=1}^{m} \Big(
\big(x_i K \frac{\partial}{\partial x_i}+\lambda \frac{\partial}{\partial (x_i)^2}\big)f_{\underbar x}\big((z_1,...,z_m)\big)
- \sigma x_i^2\frac{\partial}{\partial x_i} f_{\underbar x}\big((z_1,...,z_m)\big)\Big)\,\nu^{\otimes m,\downarrow}(\mathrm{d}\underline{z})
\\
&=
\Omega^{(\lambda,K,\sigma),\ast}_{\mbox{\tiny virus branching}} F\big(\nu,(q,m,\underbar x)\big),
\end{aligned}
\end{equation}
which proves (\ref{e:078}).
\end{proof}

We close this subsection by finishing the proof of Theorem~\ref{T:001}.

\begin{proof}[Proof of Theorem~\ref{T:001}]
We have already seen that our family $\{Z^\zeta;\,\zeta>0\}$ is tight and that every limit process satisfies the martingale problem considered in the statement of Theorem~\ref{T:001}. It remains to show {\em uniqueness}. For that we use the duality relation stated in Theorem~\ref{T:003}
together with the fact that the family ${\mathcal K}$ if duality functions is convergence determining (compare with Remark~\ref{Rem:003}).
Moreover, the uniqueness implies the convergence of $Z^\zeta$ as $\zeta\to 0$ to the unique solution of the martingale problem follows.
\end{proof}

\section{Proof on the long term behavior}
\label{S:longterm}
In this section we apply our duality relation to prove Proposition~\ref{P:004}.

\begin{proof}[Proof of Proposition~\ref{P:004}] Consider $\nu\in{\mathcal N}_f(\mathbb{R}_+)$ with $\langle 1,\nu\rangle=1$.
Apply (\ref{ASAW-duality}) with $q=1$ and $m=1$. Then
\begin{equation}
\label{e:079}
   \mathbb{E}_{\nu}\big[\int_{\mathbb{R}_+} e^{-z x}\,Z_t(\mathrm{d}z)\big]
    =
  e^{rt}\int_{\mathbb{R}_+}\mathbb{E}_{x}\big[e^{-z X(t)}\big]\,\nu(\mathrm{d}{z}).
\end{equation}

If we put $\hat{Z}_t:=e^{-rt}Z_t$, then
\begin{equation}
\label{e:079}
   \mathbb{E}_{\nu}\big[\int_{\mathbb{R}_+} e^{-z x}\,\hat{Z}_t(\mathrm{d}z)\big]
    =
  \int_{\mathbb{R}_+}\mathbb{E}_{x}\big[e^{-z X(t)}\big]\,\nu(\mathrm{d}{z})
  \tto 1=\int_{\mathbb{R}_+} e^{- z\cdot x}\,\mathbb{E}\big[\mu(\mathrm{d}z)\big]
\end{equation}
for a random measure $\mu\in{\mathcal M}_f(\mathbb{R}_+)$ with $\mu((0,\infty))=0$, almost surely,  and $\langle 1,\mu\rangle$
is exponentially distributed with mean $1$. Thus the claim follows.
\end{proof}

\bibliography{branching-populations}
\bibliographystyle{alpha}

\end{document}